\newtheorem{thm}{Theorem}[section]
\newtheorem{cor}[thm]{Corollary}
\newtheorem{lem}[thm]{Lemma}
\newtheorem{sbl}[thm]{Sublemma}
\newtheorem{prop}[thm]{Proposition}
\newtheorem{claim}[thm]{Claim}
\newtheorem{fact}[thm]{Fact}
\newtheorem{observation}[thm]{Observation}
\newtheorem{defn}[thm]{Definition}
\theoremstyle{remark}
\newtheorem{rem}[thm]{Remark}
\newcommand{\rr}{\mathbb{R}}
\newcommand{\ee}{\varepsilon}
\newcommand{\meg}{\geqslant}
\newcommand{\mik}{\leqslant}
\newcommand{\thtensor}{\boldsymbol{\theta}=\langle \theta_i: i\in [n]^d\rangle}
\newcommand{\Fcal}{\mathcal{F}}
\newcommand{\Gcal}{\mathcal{G}}
\newcommand{\Scal}{\mathcal{S}}
\newcommand{\Acal}{\mathcal{A}}
\newcommand{\Dcal}{\mathcal{D}}
\newcommand{\Xcal}{\mathcal{X}}
\newcommand{\Ycal}{\mathcal{Y}}
\newcommand{\Forb}[2]{\mathrm{Forbid}\left(#1,#2\right)}
\newcommand{\pmeasure}[1]{\ensuremath{\mu_{p}\left(#1\right)}}
\newcommand{\seminorm}[1]{{\left\vert\kern-0.25ex\left\vert\kern-0.25ex\left\vert #1
    \right\vert\kern-0.25ex\right\vert\kern-0.25ex\right\vert}}
\begin{document}

\title[Forbidden sparse intersections]{Forbidden sparse intersections}

\author{Pandelis Dodos and Miltiadis Karamanlis}

\address{Department of Mathematics, University of Athens, Panepistimiopolis 157 84, Athens, Greece}
\email{pdodos@math.uoa.gr}

\address{Department of Mathematics, University of Athens, Panepistimiopolis 157 84, Athens, Greece}
\email{kararemilt@gmail.com}

\thanks{2010 \textit{Mathematics Subject Classification}: 05D05, 05D40.}
\thanks{\textit{Key words}: forbidden intersections, Frankl--R\"{o}dl theorem, Erd\H{os}--S\'{o}s problem.}


\begin{abstract}
Let $n$ be a positive integer, let $0<p\leqslant p'\leqslant \frac12$, and let $\ell\leqslant pn$ be a nonnegative integer.
We prove that if $\mathcal{F},\mathcal{G}\subseteq \{0,1\}^n$ are two families whose cross intersections forbid
$\ell$---that is, they satisfy $|A\cap B|\neq \ell$ for every $A\in\mathcal{F}$ and every $B\in\mathcal{G}$---then,
setting $t\coloneqq \min\{\ell,pn-\ell\}$, we have the subgaussian bound
\[ \mu_p(\mathcal{F})\, \mu_{p'}(\mathcal{G}) \leqslant 2\exp\Big( - \frac{t^2}{58^2\,pn}\Big), \]
where $\mu_p$ and $\mu_{p'}$ denote the $p$-biased and $p'$-biased measures on $\{0,1\}^n$ respectively.
\end{abstract}

\maketitle

\tableofcontents


\section{Introduction} \label{sec1}

\numberwithin{equation}{section}

Extremal set theory can be traced back to the seminal work\footnote{Although published in 1961,
the Erd\H{o}s--Ko--Rado theorem was actually discovered much earlier, in~1938.} of Erd\H{o}s, Ko and
Rado \cite{EKR61} who obtained sharp estimates of the cardinality of a family $\mathcal{A}\subseteq \binom{[n]}{k}$
that is \textit{intersecting}, that is, it satisfies $A\cap B\neq \emptyset$ for every $A,B\in\mathcal{A}$.
(Here, and in the rest of this paper, $\binom{[n]}{k}$ denotes the set of all $k$-element subsets of
the discrete interval $[n]\coloneqq \{1,\dots,n\}$.) Since then, it is an active subfield of combinatorics;
we refer the reader to \cite{El22,FT18} for recent expositions of this theory and its applications.

\subsection{The Erd\H{o}s--S\'{o}s problem and related results} \label{subsec1.1}

A more challenging problem was posed in 1971 by Erd\H{o}s and S\'{o}s (see \cite{Erd75}); it asks to determine,
for a given triple $\ell\mik k\mik n$ of positive integers, the cardinality of the largest family
$\mathcal{A}\subseteq \binom{[n]}{k}$ whose intersections \textit{forbid} $\ell$, that is, it satisfies
$|A\cap B|\neq \ell$ for every $A,B\in\mathcal{A}$. Early significant progress on the Erd\H{o}s--S\'{o}s problem
was made by Frankl--Wilson \cite{FW81} and \text{Frankl--F\"{u}redi} \cite{FF85}. Somewhat later, in 1987,
a breakthrough was achieved by Frankl and R\"{o}dl who obtained exponential estimates in the regime where
$\ell$ is proportional~to~$n$. More precisely, it is shown in \cite[Theorem 1.5]{FR87} that for
every $0<\ee<p\mik \frac12$ there exists a constant $\gamma(\ee,p)>0$ such that for every pair of
positive integers $\ell,n$ with $\ee n\mik \ell\mik pn-\ee n$, and every pair of families\footnote{We
identify every $A\subseteq [n]$ with its indicator function $\mathbf{1}_A\in \{0,1\}^n$.}
$\Fcal,\Gcal\subseteq \{0,1\}^n$ whose cross intersections forbid $\ell$---that is, they satisfy
$|A\cap B|\neq \ell$ for every $A\in\Fcal$ and every $B\in\Gcal$---we have
\begin{equation} \label{e1.1}
\mu_p(\Fcal)\, \mu_p(\Gcal) \mik \big(1-\gamma(\ee,p)\big)^n,
\end{equation}
where $\mu_p$ denotes the $p$-biased measure on $\{0,1\}^n$, namely, the probability measure
on $\{0,1\}^n$ defined by setting for every $A\subseteq [n]$,
\begin{equation} \label{e1.2}
\mu_p\big(\{A\}\big) \coloneqq p^{|A|} (1-p)^{n-|A|}.
\end{equation}
(For the case $\Fcal=\Gcal$, an alternative proof was given by Keevash and Long \cite{KLo16}; see also \cite{KSZ22}.)
The work of Frankl and R\"{o}dl has proven to be very influential, and it has found applications
in a number of different areas such as discrete geometry \cite{FR90},
communication complexity~\cite{S99} and quantum computing \cite{BCW99}.

Further progress on the Erd\H{o}s--S\'{o}s problem was made by several authors, including the very recent
works of Ellis--Keller--Lifshitz \cite{EKL24}, Keller--Lifshitz \cite{KLi21} and \text{Kupavskii--Zaharov} \cite{KZ24}
(see, also, \cite{KLLM23,KLo20} for closely related developments). Collectively, the papers \cite{EKL24,KLi21}
obtain the sharp estimate $|\mathcal{A}|\mik \binom{n-(\ell+1)}{k-(\ell+1)}$ for every family
$\mathcal{A}\subseteq \binom{[n]}{k}$ whose intersections forbid $\ell$ in the regime
$2\ell< k\mik \big(\frac12-\ee\big)n$ with $n\meg n_0(\ell,\ee)$ for some (unspecified)
threshold function $n_0(\ell,\ee)$. The more recent paper \cite{KZ24} extends this sharp estimate in the regime
$\ell=\lceil k^\beta\rceil$ and $n=\lceil k^\alpha\rceil$, where $\alpha,\beta>0$ are positive reals
with $\beta<\frac12$ and $\alpha>1+2\beta$ and $k$ is sufficiently large in terms of $\alpha,\beta$.

\subsection{The main estimate} \label{subsec1.2}

Our main result provides new estimates of the product of the biased measures of a pair of families
with forbidden cross intersections and, in particular, it bridges the gap between the aforementioned results.
\begin{thm} \label{main}
Let $n$ be a positive integer, let $0<p\mik p'\mik \frac12$, and let $\ell\mik pn$ be a nonnegative integer.
Also let $\Fcal,\Gcal\subseteq \{0,1\}^n$ be two families whose cross intersections forbid $\ell$, that is,
$|A\cap B|\neq \ell$ for every $A\in\Fcal$ and $B\in\Gcal$. Then, setting $t\coloneqq \min\{\ell,pn-\ell\}$,
we have the subgaussian bound
\begin{equation} \label{e1.main}
\mu_p(\Fcal)\, \mu_{p'}(\Gcal) \mik 2\exp\Big( - \frac{t^2}{58^2\, pn}\Big).
\end{equation}
\end{thm}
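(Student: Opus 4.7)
\medskip

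\noindent\emph{Proof plan.} My plan is to reduce the forbidden-intersection hypothesis to a subgaussian concentration estimate via a monotone coupling. Since $p\mik p'$, I have a natural coupling of $\mu_p$ and $\mu_{p'}$: for each $i\in[n]$ independently, put $i\in A$ with probability $p$ and, on the event $\{i\notin A\}$, put $i\in B$ with conditional probability $(p'-p)/(1-p)$ (with $i\in B$ automatically when $i\in A$). Then $A\sim\mu_p$, $B\sim\mu_{p'}$, $A\subseteq B$, and so $|A\cap B|=|A|\sim\mathrm{Bin}(n,p)$. Under this coupling, the hypothesis $|A\cap B|\neq \ell$ on $\Fcal\times\Gcal$ becomes the statement that no $A\in\Fcal$ with $|A|=\ell$ admits a super-set in $\Gcal$.

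The binomial $\mathrm{Bin}(n,p)$ concentrates around its mean $pn$ on the subgaussian scale $\sqrt{pn}$. Specifically, the Chernoff--Hoeffding bounds
\[ \prob\bigl(|A|\mik \ell\bigr)\mik \exp\!\Big(-\tfrac{(pn-\ell)^2}{2pn}\Big),\qquad \prob\bigl(|A|\meg \ell\bigr)\mik \exp\!\Big(-\tfrac{\ell^2}{C\,pn}\Big) \]
(the second estimate being the relevant one when $\ell$ is much smaller than $pn$) naturally produce an exponent of the form $t^2/(C\,pn)$ with $t=\min\{\ell,\,pn-\ell\}$. The shape of \eqref{e1.main} is consistent with marrying these two tail estimates; the factor $2$ on the right-hand side plausibly reflects a case split between $t=\ell$ and $t=pn-\ell$, while the specific constant $58^2$ is the quantitative residue of tracking the combinatorial and Chernoff constants.

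\medskip

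\noindent\emph{Main obstacle.} The monotone coupling directly constrains only the slice $\Fcal\cap\{|A|=\ell\}$ and its upper shadow in $\Gcal$, whereas the target bound concerns the full product $\mu_p(\Fcal)\,\mu_{p'}(\Gcal)$; moreover the joint probability $\prob(A\in\Fcal,\,B\in\Gcal)$ under the coupling is typically larger than the product of marginals (positive correlation), so Chernoff alone is insufficient. The heart of the proof will be upgrading the slice-level information to a global bound. I would attempt this by (i) averaging over a one-parameter family of couplings whose effective mean intersection sweeps through the interval $[pp'n,\,pn]$; (ii) invoking a spectral or hypercontractive estimate for the intersection-equals-$\ell$ operator on the $p$-biased cube; or (iii) setting up a density-increment or random-restriction induction that reduces to a smaller $(n',\ell')$ instance. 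Extracting the correct subgaussian dependence on $t$ and $pn$ through any of these routes will be the principal technical challenge.
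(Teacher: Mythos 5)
Your proposal is an outline rather than a proof, and the gap you flag as the ``main obstacle'' is in fact fatal to the coupling framework as stated, not a technical hurdle. The monotone coupling, because it forces $A\subseteq B$ and hence $|A\cap B|=|A|$, extracts from the hypothesis only the slice-level statement that no $A\in\Fcal$ with $|A|=\ell$ has a superset in $\Gcal$. This is strictly weaker than the forbidden-intersection condition: the coupling is blind to every pair $(A,B)$ with $|A|\neq\ell$, yet those pairs carry most of the combinatorial content of the hypothesis. Concretely, if $\Fcal\subseteq\binom{[n]}{\ell+1}$ then the coupling constraint is vacuous, while the forbidden-intersection constraint remains highly restrictive (two $(\ell+1)$-sets meet in $\ell$ points very easily). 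In addition, even where the coupling does give information, it controls $\prob(A\in\Fcal,\,B\in\Gcal)$ under a positively dependent joint law, and as you observe this sits on the wrong side of the product $\mu_p(\Fcal)\,\mu_{p'}(\Gcal)$ for monotone families; there is no mechanism in the proposal for converting the coupled bound into the desired product bound. Of the three escape routes you list, (i) and (ii) are too vague to assess, and neither obviously repairs the loss of information described above.

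Your route (iii) is the one the paper actually takes, but it requires abandoning the coupling framework rather than supplementing it. The paper runs a Frankl--R\"odl-style deformation algorithm: at each step one tests the three pairs $(\Fcal_1,\Gcal_1)$, $(\Fcal_0,\Gcal_0\cup\Gcal_1)$, $(\Fcal_0\cup\Fcal_1,\Gcal_0)$ for density increment with deliberately \emph{non-uniform} thresholds $1+\delta$ and $1+\tfrac{p}{1-p}\delta$; if all three fail, the Widening Lemma (Lemma~\ref{l4.1}) guarantees that one of $(\Fcal_1,\Gcal_0\cap\Gcal_1)$, $(\Fcal_0\cap\Fcal_1,\Gcal_1)$ retains almost the full product while the forbidden interval $[a,b]$ widens by one. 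Iterating, one reaches families forbidding an initial or final interval, for which Lemma~\ref{l3.3} (via a Talagrand-type concentration estimate, Proposition~\ref{p3.1}) gives the required subgaussian bound, and a delicate bookkeeping of the counters $S_{d_1},S_{d_2},S_w$ closes the argument. None of this recursion, nor the crucial analytic content of Sublemma~\ref{s4.2}, is visible from the coupling picture, so the proposal as written has a genuine missing idea.
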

Note that Theorem \ref{main} extends the Frankl--R\"{o}dl theorem \cite[Theorem 1.5]{FR87};
indeed, the bound \eqref{e1.main} is nontrivial if $80\sqrt{pn}\mik \ell \mik pn-80\sqrt{pn}$
uniformly for $p\meg \frac{160^2}{n}$.
\begin{rem} \label{r1.2}
Theorem \ref{main} can be extended to cover the case of all parameters $p,p'$ in the regime
$0<p\mik p'\mik 1-p$, and it also has a version for pairs of families $\Fcal,\Gcal$ that are contained
in two, possibly different, layers of the cube. We present these (standard) extensions in Section \ref{sec7}.
\end{rem}
\begin{rem} \label{r1.3}
The subgaussian bound \eqref{e1.main} is actually optimal, modulo universal constants,
for various choices of $p,p'$ and $\ell$. We discuss these issues in Section \ref{sec8}.
\end{rem}
\begin{rem} \label{r1.4}
Theorem \ref{main} also has a supersaturation version, which is in the spirit of
\cite[Theorem 1.14]{FR87}; see Section \ref{sec9} for details.
\end{rem}

\subsection{Outline of the proof of Theorem \ref{main}} \label{subsec1.3}

The proof of Theorem \ref{main} follows the general strategy invented by Frankl and R\"{o}dl \cite{FR87}.
The idea is to gradually ``deform" the given families, and eventually arrive at a pair of families whose
cross intersections forbid an initial or a final interval; the measures of these final families can then
be estimated by standard probabilistic tools. This ``deformation" is entirely algorithmic,
and it is the heart of the proof.

The algorithm in \cite{FR87} takes as an input two families $\Fcal,\Gcal\subseteq \{0,1\}^n$
whose cross intersections forbid an interval of $[n]$, and it starts by using a density increment
argument in order to show that the two sections
\[ \Fcal_0\coloneqq \{A: n\notin A\in\Fcal\} \ \ \ \text{ and } \ \ \
\Fcal_1\coloneqq \big\{A\setminus\{n\}: n\in A\in\mathcal{F}\big\} \]
of $\Fcal$, have roughly the same measure. Once this is done, the algorithm proceeds by comparing the measures
of the union $\Gcal_0\cup \Gcal_1$ and the intersection $\Gcal_0\cap \Gcal_1$ of the sections of $\Gcal$.
Again, a density increment argument is used to ensure that the measures of $\Gcal_0\cup \Gcal_1$ and
$\Gcal_0\cap \Gcal_1$ are roughly equal, which in turn implies that the two sections, $\Gcal_0$ and
$\Gcal_1$, are almost equal. One can then use this structural information to produce a pair
of families whose cross intersections forbid a larger interval, while at the same time one keeps
control of the product of their biased measures.

While the algorithm of Frankl and R\"{o}dl is elegant and efficient, unfortunately it leads
to suboptimal results as $p$ gets smaller, and it hits a barrier\footnote{It is also not clear
if the algorithm works if $\min\{\ell,pn-\ell\}=o(pn)$ and $p=\Theta(1)$, but the obstacles in this regime
seem somewhat less serious.} at $p=o(1)$. The reason is rather simple: if $p$ is small,
then having (or not having) density increment for the section $\mathcal{A}_1$ of a family
$\mathcal{A}\subseteq \{0,1\}^n$ has negligible effect on the measure of the other section $\mathcal{A}_0$.

We resolve this issue by introducing a new algorithm that also takes as an input two families
$\Fcal,\Gcal\subseteq \{0,1\}^n$ whose cross intersections forbid an interval of $[n]$, and it starts by seeking
for a density increment for one of the pairs $(\Fcal_1,\Gcal_1)$, $(\Fcal_0, \Gcal_0\cup \Gcal_1)$
and $(\Fcal_0\cup\Fcal_1, \Gcal_0)$. However, the density increment the algorithm is searching for,
is not \textit{uniform} and depends on the specific pair it is looking at (as well as the parameter $p$).
The particular choice of the density increments is justified analytically: if the algorithm does not
succeed in this search, then this yields a strong \textit{lower} bound for the product of the measures
of $\Fcal_1$ and $\Gcal_0\cap \Gcal_1$, or the product of the measures of $\Fcal_0\cap\Fcal_1$
and $\Gcal_1$. This is the content of Lemma~\ref{l4.1} (the ``widening lemma") in Section \ref{sec4}.
With this information at hand, we may proceed as in the algorithm of Frankl and R\"{o}dl.
The main novelty (and technical difficulty) of the proof of Theorem \ref{main} is thus to show
that this rough outline is actually feasible by appropriately selecting the various parameters.


\section{Background material} \label{sec2}

\subsection{General notation} \label{subsec2.1}

For every pair $a,b$ of integers with $0\mik a\mik b$ by $[a,b]$ we denote the discrete
interval $\{k\in\mathbb{Z}: a\mik k \mik b\}$. Also recall that for every positive integer~$n$
and every nonnegative integer $k\mik n$, we set $[n]\coloneqq \{1,\dots,n\}$ and
$\binom{[n]}{k}\coloneqq\{A\subseteq [n]: |A|=k\}$; moreover, for every
$t\meg 0$, we set $[n]^{\mik t}\coloneqq \{A\subseteq [n]: |A|\mik t\}$,
$[n]^{<t}\coloneqq \{A\subseteq [n]: |A|<t\}$,
$[n]^{\meg t}\coloneqq \{A\subseteq [n]: |A|\meg t\}$ and
$[n]^{>t}\coloneqq \{A\subseteq [n]: |A|>t\}$.

\subsection{Families of sets} \label{subsec2.2}

Let $n$ be a positive integer, and let $\Acal\subseteq \{0,1\}^n$.
We say that $\Acal$ is \textit{downwards closed} if for every
$A\in\Acal$ and every $B\subseteq A$ we have that $B\in\Acal$; respectively, we say that
$\Acal$ is \textit{upwards closed} if for every $A\in\Acal$ and every $B\supseteq A$
we have that $B\in\Acal$. If, in addition, $n\meg 2$, then we set
\begin{equation} \label{e2.1}
\mathcal{A}_0\coloneqq \{A: n\notin A\in\mathcal{A}\} \ \ \ \text{ and } \ \ \
\mathcal{A}_1\coloneqq \big\{A\setminus \{n\}: n\in A\in\mathcal{A}\big\},
\end{equation}
and we view both $\Acal_0$ and $\Acal_1$ as subfamilies of $\{0,1\}^{n-1}$.

It is also convenient to introduce the following definition.
\begin{defn}[Forbidden intersections] \label{d2.1}
Let $n$ be a positive integer, let $L\subseteq [n]$, and let $\Fcal,\Gcal\subseteq \{0,1\}^n$.
We write $(\Fcal,\Gcal)\in \mathrm{Forbid}(n,L)$ to denote the fact that the cross intersections
of $\Fcal$ and\, $\Gcal$ forbid\, $L$, that is, $|A\cap B|\notin L$ for every $A\in\Fcal$ and $B\in\Gcal$.
\end{defn}

\subsection{$p$-biased measures} \label{subsec2.3}

Recall that for every positive integer $n$ and every $0<p<1$ by $\mu_p$ we
denote the $p$-biased probability measure on $\{0,1\}^n$ defined in \eqref{e1.2}.
We record, for future use, the following elementary property of these measures.
\begin{fact} \label{f2.2}
Let $n$ be a positive integer, let $0<p\mik p'<1$, and let $\Acal\subseteq \{0,1\}^n$.
If~$\Acal$ is upwards closed, then $\mu_p(\Acal)\mik \mu_{p'}(\Acal)$; respectively,
if $\Acal$ is downwards closed, then $\mu_p(\Acal) \meg \mu_{p'}(\Acal)$.
\end{fact}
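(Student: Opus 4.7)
The plan is to prove Fact~\ref{f2.2} by a standard monotone coupling argument. On the product space $[0,1]^n$ equipped with Lebesgue measure, sample an i.i.d.\ uniform vector $(U_1,\dots,U_n)$ and, for each $q\in\{p,p'\}$, define the random set $A^{(q)}\coloneqq\{i\in[n]:U_i\mik q\}$. One checks immediately from the definition in \eqref{e1.2} that $A^{(q)}$ has distribution $\mu_q$, since each coordinate is independently included with probability $q$. The coupling is monotone: because $p\mik p'$, for every realization of $(U_1,\dots,U_n)$ we have $A^{(p)}\subseteq A^{(p')}$.

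If $\Acal$ is upper closed, then the event $\{A^{(p)}\in\Acal\}$ is contained in the event $\{A^{(p')}\in\Acal\}$, whence
\[
\mu_p(\Acal)=\prob\big(A^{(p)}\in\Acal\big)\mik\prob\big(A^{(p')}\in\Acal\big)=\mu_{p'}(\Acal).
\]
If $\Acal$ is downwards closed, then by the same containment the reverse inclusion of events holds, giving $\mu_p(\Acal)\meg\mu_{p'}(\Acal)$. This finishes the proof.

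There is no real obstacle: the only thing to verify is that the coordinatewise threshold coupling produces the correct marginal distributions, which is a one-line calculation, and that monotonicity of $A^{(q)}$ in $q$ interacts correctly with the two closure hypotheses. An alternative route, if one wishes to avoid couplings altogether, is to differentiate $p\mapsto\mu_p(\Acal)=\sum_{A\in\Acal}p^{|A|}(1-p)^{n-|A|}$ and use the identity $\frac{d}{dp}\mu_p(\Acal)=\frac{1}{p(1-p)}\big(\ave_{\mu_p}[|A|\mathbf{1}_{\Acal}]-p n\,\mu_p(\Acal)\big)$, whose sign one controls via the FKG/Harris inequality applied to the monotone functions $\mathbf{1}_{\Acal}$ and $|A|$; the coupling proof is however shorter and more elementary.
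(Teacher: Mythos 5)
Your coupling argument is correct and is precisely the standard elementary proof of this fact; the paper states Fact~\ref{f2.2} without proof, labelling it ``elementary,'' and your threshold coupling $A^{(q)}=\{i:U_i\mik q\}$ (monotone in $q$, with the correct $\mu_q$ marginals) combined with the two closure hypotheses is exactly the argument the paper is implicitly invoking. The alternative derivative/FKG route you sketch is also valid---the identity $\frac{d}{dp}\mu_p(\Acal)=\frac{1}{p(1-p)}\big(\ave_{\mu_p}[|A|\mathbf{1}_{\Acal}]-pn\,\mu_p(\Acal)\big)$ checks out, and Harris gives the sign of the covariance between the increasing function $|A|$ and $\mathbf{1}_{\Acal}$---but, as you say, the coupling is shorter.
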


\subsection{Chernoff bounds} \label{subsec2.4}

We will need the following standard estimates of the biased measure of the tails of the
binomial distribution (see, \textit{e.g.}, \cite[Appendix A]{AS16}).
\begin{lem} \label{l2.3}
Let $n$ be a positive integer, let $t$ be a nonnegative real, and let\, $0<p<1$.
\begin{enumerate}
\item[(i)] If\, $p\mik \frac12 $ and $pn\mik t\mik 2pn$, then
\begin{equation} \label{e2.2}
\mu_p\left([n]^{\meg t}\right)
\mik \exp\left(-\frac{(t-pn)^2}{6p(1-p)n}\right).
\end{equation}
\item[(ii)] If\, $p\meg \frac12 $ and $t\meg pn$, then
\begin{equation} \label{e2.3}
\mu_p\left([n]^{\meg t}\right) \mik \exp\left(-\frac{(t-pn)^2}{2p(1-p)n}\right).
\end{equation}
\end{enumerate}
\end{lem}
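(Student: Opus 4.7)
The plan is to deduce both (i) and (ii) from the Chernoff--Cram\'er exponential moment method, combined with a direct integral lower bound on the binary Kullback--Leibler divergence. Viewing $\mu_p$ as the law of a sum $X=X_1+\cdots+X_n$ of independent Bernoulli$(p)$ random variables, one has $\mu_p([n]^{\meg t})=\prob(X\meg t)$. For any $\lambda>0$, Markov's inequality applied to $e^{\lambda X}$ together with the factorization $\ave[e^{\lambda X}]=(1-p+pe^\lambda)^n$ gives $\prob(X\meg t)\mik \exp(-\lambda t)(1-p+pe^\lambda)^n$; optimizing over $\lambda$ (the minimizer being $\lambda=\log\frac{a(1-p)}{p(1-a)}$, with $a\coloneqq t/n$) produces the standard rate-function bound
\[
\mu_p\left([n]^{\meg t}\right)\mik \exp\bigl(-n\, D(a\,\|\,p)\bigr),
\]
where $D(a\|p)\coloneqq a\log(a/p)+(1-a)\log((1-a)/(1-p))$. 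The optimizing $\lambda$ is non-negative precisely when $a\meg p$, which is exactly the hypothesis $t\meg pn$ present in both (i) and (ii).

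What remains is to lower-bound $D(a\|p)$ by the appropriate quadratic in $a-p$. Since $D(p\|p)=0$, $\partial_a D(a\|p)|_{a=p}=0$, and $\partial_a^2 D(a\|p)=\tfrac{1}{a(1-a)}$, two successive applications of the fundamental theorem of calculus yield
\[
D(a\|p)=\int_{p}^{a}\!\int_{p}^{u}\frac{dv}{v(1-v)}\,du,
\]
so I only need a uniform \emph{upper} bound on the map $v\mapsto v(1-v)$ on the relevant integration range; after that, the integral can be evaluated explicitly.

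For part (i), where $p\mik \tfrac12$ and $p\mik a\mik 2p$, every $v\in[p,a]$ satisfies simultaneously $v\mik 2p$ and $1-v\mik 1-p$, so that $v(1-v)\mik 2p(1-p)$. Evaluating the double integral then gives $D(a\|p)\meg (a-p)^2/(4p(1-p))$; multiplying by $n$ and substituting $a=t/n$ produces an exponent of $(t-pn)^2/(4p(1-p)n)$, which is already stronger than the claimed $(t-pn)^2/(6p(1-p)n)$. For part (ii), where $p\meg \tfrac12$ and $a\meg p$, every $v$ in the integration range lies in $[\tfrac12,1]$, on which the concave map $v\mapsto v(1-v)$ is non-increasing; hence $v(1-v)\mik p(1-p)$, the double integral is at least $(a-p)^2/(2p(1-p))$, and the desired bound follows after scaling by $n$.

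The only minor subtlety, confined to part (i), is that one must use the two one-sided estimates $v\mik 2p$ and $1-v\mik 1-p$ \emph{jointly}: bounding $v(1-v)$ by $v$ alone (or by $1-v$ alone) would cost an extra factor of $1/(1-p)$ or $1/p$ and spoil the inequality when $p$ is small. The constant $6$ in the statement (as opposed to the $4$ one actually obtains) provides ample slack, so no fine tuning is required elsewhere.
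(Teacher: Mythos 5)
Your proof is correct: the exponential-moment bound $\mu_p([n]^{\meg t})\mik \exp(-nD(a\|p))$ with $a=t/n\meg p$, followed by the second-derivative (integral) lower bound on $D(a\|p)$ using $v(1-v)\mik 2p(1-p)$ on $[p,2p]$ in case (i) and $v(1-v)\mik p(1-p)$ on $[p,1]$ in case (ii), gives the stated inequalities (in fact with constant $4$ in place of $6$ in part (i)), the only unaddressed point being the trivial case $t>n$ in (ii), where the left-hand side vanishes. The paper does not prove this lemma but cites the standard Chernoff estimates in \cite[Appendix A]{AS16}, and your derivation is exactly that standard argument, so there is nothing further to compare.
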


\subsection{Estimates of binomial coefficients} \label{subsec2.5}

We will also need the following basic estimates of binomial coefficients that follow from a
non-asymptotic version of Stirling's approximation---see, \textit{e.g.}, \cite{Ro55}---and elementary computations.
\begin{fact} \label{f2.4}
Let $n\meg 2$ be an integer, and let $k\in [n-1]$. Then we have
\begin{equation} \label{e2.4}
\frac{24}{25 \, \sqrt{2\pi}} \cdot \sqrt{\frac{n}{k(n-k)}}\cdot \frac{n^n}{k^k(n-k)^{n-k}}<
\binom{n}{k}< \frac{1}{\sqrt{2\pi}}\cdot \sqrt{\frac{n}{k(n-k)}}\cdot \frac{n^n}{k^k(n-k)^{n-k}};
\end{equation}
in particular, for every $\Fcal\subseteq \binom{[n]}{k}$ we have
\begin{equation} \label{e2.5}
\sqrt{2\pi}\cdot \sqrt{\frac{k(n-k)}{n}} \cdot \mu_{\frac{k}{n}}(\Fcal) <
\frac{|\Fcal|}{\binom{n}{k}} < \frac{25 \, \sqrt{2\pi}}{24} \cdot \sqrt{\frac{k(n-k)}{n}}\cdot \mu_{\frac{k}{n}}(\Fcal).
\end{equation}
Moreover, if $H\colon [0,1]\to\rr$ denotes the binary entropy function\footnote{Recall that $H(0)=H(1)=0$,
and $H(x)=-x\log_2(x)-(1-x)\log_2(1-x)$ if $0<x<1$.}, then
\begin{equation} \label{e2.6}
\frac{24}{25 \, \sqrt{2\pi}} \cdot \sqrt{\frac{n}{k(n-k)}}\cdot 2^{nH(\frac{k}{n})} \mik \binom{n}{k} \mik
\frac{1}{\sqrt{2\pi}}\cdot \sqrt{\frac{n}{k(n-k)}}\cdot 2^{nH(\frac{k}{n})}.
\end{equation}
\end{fact}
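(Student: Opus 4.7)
The plan is to derive all three inequalities in Fact~\ref{f2.4} from Robbins' non-asymptotic refinement of Stirling's formula \cite{Ro55}: for every positive integer $m$,
\[ m!\;=\;\sqrt{2\pi m}\,(m/e)^m\,e^{r_m}\qquad\text{with}\qquad \tfrac{1}{12m+1}\;<\;r_m\;<\;\tfrac{1}{12m}. \]
Substituting this into each of the three factorials of $\binom{n}{k}=n!/(k!(n-k)!)$ and simplifying yields the exact identity
\[ \binom{n}{k}\;=\;c_{n,k}\,\sqrt{\frac{n}{k(n-k)}}\cdot\frac{n^n}{k^k(n-k)^{n-k}},\qquad c_{n,k}\;:=\;\frac{e^{r_n-r_k-r_{n-k}}}{\sqrt{2\pi}}, \]
so the entire fact reduces to two-sided numerical estimates of the scalar $c_{n,k}$.

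For \eqref{e2.4}, I would use Robbins' bounds to sandwich the exponent by
\[ -\tfrac{1}{12k}-\tfrac{1}{12(n-k)}\;<\;r_n-r_k-r_{n-k}\;<\;\tfrac{1}{12n}-\tfrac{1}{12k+1}-\tfrac{1}{12(n-k)+1}. \]
A short case analysis (separating $\min\{k,n-k\}=1$ from $\min\{k,n-k\}\ge 2$) shows the right side is negative whenever $n\ge 2$ and $1\le k\le n-1$, which together with $\tfrac{1}{\sqrt{2\pi}}<\tfrac{2}{5}$ gives the upper estimate $c_{n,k}<\tfrac{2}{5}$; the left side is bounded below by $-\tfrac{1}{6}$, so $c_{n,k}>\tfrac{e^{-1/6}}{\sqrt{2\pi}}>\tfrac{1}{5}$. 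Inequality \eqref{e2.5} is then an immediate corollary: for $\mathcal{F}\subseteq\binom{[n]}{k}$ one has $\mu_{k/n}(\mathcal{F})=|\mathcal{F}|\cdot k^k(n-k)^{n-k}/n^n$, so $\tfrac{|\mathcal{F}|}{\binom{n}{k}}=\mu_{k/n}(\mathcal{F})\cdot\tfrac{n^n/(k^k(n-k)^{n-k})}{\binom{n}{k}}$, and one plugs in the two bounds from \eqref{e2.4}.

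For \eqref{e2.6} the identity $2^{nH(k/n)}=n^n/(k^k(n-k)^{n-k})$ reduces the claim to showing $c_{n,k}\in\big[\tfrac{1}{\sqrt{8}},\tfrac{1}{\sqrt{\pi}}\big]$. The upper bound $c_{n,k}\le\tfrac{1}{\sqrt{2\pi}}<\tfrac{1}{\sqrt{\pi}}$ is free from the negativity of the exponent already established. The lower bound is the sharpest constant in Fact~\ref{f2.4} and is the main technical hurdle: the naive estimate $e^{-1/6}/\sqrt{2\pi}\approx 0.338$ falls just short of $1/\sqrt{8}\approx 0.354$. I would resolve this by a case split on $s:=\min\{k,n-k\}$. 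When $s\ge 3$, Robbins yields the sharper bound $r_k+r_{n-k}\le\tfrac{1}{36}+\tfrac{1}{36}=\tfrac{1}{18}$, so $c_{n,k}\ge e^{-1/18}/\sqrt{2\pi}>1/\sqrt{8}$. When $s\in\{1,2\}$ one substitutes the explicit values of $k!$ and $(n-k)!$ in $\binom{n}{k}$ and is left with a one-parameter inequality in $n$ of the form $(1-s/n)^{\text{linear in }n}\ge \text{const}$, which follows from a direct monotonicity check, with equality precisely in the extremal case $n=2$, $k=1$.
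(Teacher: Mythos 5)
Your proposal is correct and takes essentially the same route the paper intends: Fact~\ref{f2.4} is stated there without proof as a consequence of Robbins' non-asymptotic Stirling bounds \cite{Ro55} plus elementary computations, which is exactly what you carry out, including the key point that the crude estimate $e^{-1/6}/\sqrt{2\pi}$ falls short of $1/\sqrt{8}$ in \eqref{e2.6} and must be repaired by a case split on $s=\min\{k,n-k\}$, with equality only at $n=2$, $k=1$. The sole cosmetic caveat is that for $s=2$ the reduced inequality reads $8(n-1)/n \meg \big(1+\tfrac{2}{n-2}\big)^{n-3/2}$ rather than literally being of the form $(1-s/n)^{\text{linear in }n}\meg \text{const}$, but it is still an elementary monotonicity/finite check, so the argument stands.
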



\section{Forbidding initial or final intervals} \label{sec?}

\numberwithin{equation}{section}

Our goal in this section is to obtain estimates for the product of the biased measures of a pair of
families $\Fcal, \Gcal\subseteq \{0,1\}^n$ whose cross intersections forbid an initial or a final subinterval of $[n]$.
This information is needed for the proof of Theorem \ref{main}.

We note that closely related problems have been studied extensively in extremal combinatorics; see
\cite{El22,FT18} and the references therein.  We shall obtain the desired estimates, however,
from the following well-known concentration inequality for the biased measures.
As usual, for a family $\Acal\subseteq \{0,1\}^n$ and a nonnegative real $t\mik n$ we set
$\Acal_t\coloneqq \{H\subseteq [n]: \exists A\in\Acal \text{ such that } |H\bigtriangleup A|\mik t\}$.
\begin{prop} \label{p3.1}
Let $n$ be a positive integer, let $0<p<1$, let $t\mik pn$ be a nonnegative real, and let
$\Acal\subseteq \{0,1\}^n$ such that $\pmeasure{\Acal}\meg \frac12$.
\begin{enumerate}
\item[(i)] If\, $0<p\mik \frac12$, then we have
\begin{equation} \label{e3.1}
\pmeasure{\Acal_t}\meg 1-\exp\left(-\frac{t^2}{6p(1-p)n}\right).
\end{equation}
\item[(ii)] If\, $\frac12<p<1$, then we have
\begin{equation} \label{e3.2}
\pmeasure{\Acal_t}\meg 1-\exp\left(-\frac{t^2}{2p(1-p)n}\right).
\end{equation}
\end{enumerate}
\end{prop}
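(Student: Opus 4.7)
My plan is to reduce Proposition~\ref{p3.1} to a standard concentration inequality for a $1$-Lipschitz function on the $p$-biased cube, applied to the Hamming distance from a point to $\Acal$. Specifically, define $f(X)\coloneqq \min_{A\in\Acal}|X\bigtriangleup A|$; then $\Acal_t=\{X:f(X)\mik t\}$, so the proposition is equivalent to an upper bound on $\mu_p(f>t)$. The function $f$ is $1$-Lipschitz with respect to Hamming distance, and since $f\equiv 0$ on $\Acal$ and $\pmeasure{\Acal}\meg \tfrac12$, the $\mu_p$-median of $f$ equals $0$.

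To obtain the tail bound, I would set up the Doob martingale $M_k\coloneqq \ave[f(X)\mid X_1,\dots,X_k]$ for coordinates $X_1,\dots,X_n$ that are i.i.d.\ $\mathrm{Bernoulli}(p)$, and analyze its increments $D_k=M_k-M_{k-1}$. A short computation using the $1$-Lipschitz property shows that, conditionally on $X_1,\dots,X_{k-1}$, $D_k$ takes the value $(1-p)\Delta_k$ with probability $p$ and $-p\Delta_k$ with probability $1-p$, where $|\Delta_k|\mik 1$; in particular, $|D_k|\mik \max(p,1-p)$ and the conditional variance of $D_k$ is at most $p(1-p)$. Running the standard exponential-moment argument on $\sum_k D_k=f-\ave f$ then yields a Chernoff-type tail bound for $f$ whose numerical constants match exactly those of Lemma~\ref{l2.3}: the constant $6$ in case (i) comes from the linearization $(1+u)\log(1+u)-u\meg u^2/3$, valid in the broader range of $t$ allowed there, whereas the sub-Gaussian constant $2$ in case (ii) comes from $(1+u)\log(1+u)-u\meg u^2/2$ in the tighter range.

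The main obstacles are two pieces of bookkeeping. First, one has to transfer concentration from the mean to the median, which I would handle via the classical estimate $|\ave f - \mathrm{Median}(f)|=O(\sqrt{p(1-p)n})$; in the relevant range $t\mik pn$ this discrepancy can be absorbed into the constants. Second, extracting precisely the constants $6$ and $2$ requires the same linearization of the Bennett exponent that underlies the proof of Lemma~\ref{l2.3}. Modulo this bookkeeping, Proposition~\ref{p3.1} is an immediate consequence of the martingale concentration applied to the Hamming-distance function $f$.
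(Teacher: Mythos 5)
Your approach---bounding the tail of $f(X)=\min_{A\in\Acal}|X\bigtriangleup A|$ by a Doob martingale---is a genuinely different route from the paper's, which reduces to downwards-closed families (via Talagrand), compares $\Acal$ and $\Acal_t$ to Hamming balls via Jogdeo--Samuels and Bollob\'{a}s--Leader, and then invokes the binomial tail bounds of Lemma~\ref{l2.3}. However, the martingale route has a genuine gap in the mean-to-median transfer, and I do not believe it can be made to yield the stated constants. The hypothesis $\pmeasure{\Acal}\meg\frac12$ fixes the \emph{median} of $f$ at $0$, whereas your martingale estimate concentrates $f$ around its \emph{mean}; the gap $\ave f$ is of order $\sqrt{p(1-p)n}$, which is exactly the scale at which \eqref{e3.1}--\eqref{e3.2} first become nontrivial, so it cannot be ``absorbed'' for free. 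Even granting a mean-concentration bound with the very same Chernoff exponent, say $\mu_p(f>t)\mik\exp\big(-\frac{(t-\ave f)^2}{6p(1-p)n}\big)$, comparing against the target $\exp\big(-\frac{t^2}{6p(1-p)n}\big)$ requires $\ave f\mik 0$; more careful optimization still leaves a window of $t$ of order $\sqrt{p(1-p)n}$ covered by neither the trivial bound $\pmeasure{\Acal_t}\meg\frac12$ nor the mean-centred tail estimate. The Bollob\'{a}s--Leader isoperimetric inequality is precisely what bypasses the mean: it compares $\Acal_t$ directly to a Hamming ball, so no slack is lost.

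There is also a concrete error in the bookkeeping for case (ii). The inequality $(1+u)\log(1+u)-u\meg u^2/2$ is false: writing $h(u)=(1+u)\ln(1+u)-u$, one has $h(0)=h'(0)=0$ and $h''(u)=\frac{1}{1+u}<1$ for $u>0$, so in fact $h(u)<u^2/2$ for all $u>0$. Consequently, for $\frac12<p<1$ the martingale increments obey only $|D_k|\mik p$ and $\mathrm{Var}(D_k\mid\Fcal_{k-1})\mik p(1-p)$, and the resulting Bennett--Freedman exponent $\frac{n(1-p)}{p}\,h\!\big(\frac{s}{n(1-p)}\big)$ is \emph{strictly smaller} than the Gaussian exponent $\frac{s^2}{2p(1-p)n}$ for every $s>0$. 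The sharp constant $2$ in Lemma~\ref{l2.3}(ii) exploits the exact two-point law of the binomial increments, which is again exactly what the ball comparison supplies and a generic Lipschitz martingale bound does not.
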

Proposition \ref{p3.1} follows from the proof of \cite[Proposition 2.4]{BHT06}
in the work of Bobkov, Choudr\'{e} and Tetali, which in turn is based on results
of Bollob\'{a}s--Leader \cite{BL91}, Jogdeo--Samuels \cite{JS68}, and Talagrand \cite{Ta89}.
Since Proposition \ref{p3.1} is not explicitly isolated in \cite{BHT06}, for the
convenience of the reader we briefly recall the argument.
\begin{proof}[Proof of Proposition \emph{\ref{p3.1}}]
As it is mentioned in \cite{BHT06}, it is enough to prove the result under the additional assumption that
$\Acal$ is downwards closed. Indeed, Step~1 through Step 4 in the proof
of \cite[Theorem 7]{Ta89} carry out this reduction. So, suppose that $\Acal$ is downwards closed
with $\pmeasure{\Acal}\meg \frac12$. By \cite[Theorem 3.2 and Corollary 3.1]{JS68},
we~have
\begin{equation} \label{e3.3}
\pmeasure{\Acal}\meg \mu_p\big([n]^{\mik \lfloor pn \rfloor}\big);
\end{equation}
that is, the median of the binomial distribution $\mathrm{Bin}(n,p)$ is greater than or equal to $\lfloor pn \rfloor$.
Moreover, since $\Acal$ is downwards closed, by \cite[Corollary 5]{BL91}, we have
\begin{equation} \label{e3.4}
\pmeasure{\Acal_t}\meg \mu_p\big([n]^{\mik \lfloor pn \rfloor+t}\big),
\end{equation}
which in turn implies that
\begin{equation} \label{e3.5}
1-\pmeasure{\Acal_t}\mik
\mu_p\big([n]^{>\lfloor pn\rfloor +t}\big)\mik \mu_p\big([n]^{\meg pn +t}\big).
\end{equation}
Therefore, if $0<p\mik \frac12$, then \eqref{e3.1} follows from \eqref{e3.5} and \eqref{e2.2},
while if $\frac12< p<1$, then \eqref{e3.2} follows from \eqref{e3.5} and \eqref{e2.3}.
\end{proof}
Proposition \ref{p3.1} will be used in the following form (the proof is straightforward, and it is left
to the reader).
\begin{cor} \label{c3.2}
Let $n$ be a positive integer, let $0<p<1$, let $t\mik pn$ be a nonnegative real, and let
$\Acal\subseteq \{0,1\}^n$.
\begin{enumerate}
\item[(i)] If\, $0<p\mik \frac12$ and $\pmeasure{\Acal}>\exp\big(-\frac{t^2}{6p(1-p)n}\big)$,
then $\pmeasure{\Acal_{2t}}>1-\exp\big(-\frac{t^2}{6p(1-p)n}\big)$.
\item[(ii)] If\, $\frac12<p<1$ and $\pmeasure{\Acal}>\exp\big(-\frac{t^2}{2p(1-p)n}\big)$,
then $\pmeasure{\Acal_{2t}}>1-\exp\big(-\frac{t^2}{2p(1-p)n}\big)$.
\end{enumerate}
\end{cor}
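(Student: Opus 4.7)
The plan is to derive Corollary \ref{c3.2} from Proposition \ref{p3.1} via the standard ``double-enlargement'' trick; I focus on case (i), as case (ii) is identical with $6$ replaced by $2$. Set $\tau\coloneqq t^2/(6p(1-p)n)$ and $\Bcal\coloneqq \{0,1\}^n\setminus \Acal_t$, the complement of the $t$-neighborhood of $\Acal$ in the Hamming metric.

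The only geometric input needed is the disjointness $\Bcal_t\cap \Acal = \emptyset$, which follows from the triangle inequality for $\bigtriangleup$: any $C\in\Bcal_t\cap\Acal$ would supply a witness $B\in\Bcal$ with $|B\bigtriangleup C|\mik t$, while $B\notin \Acal_t$ forces $|B\bigtriangleup A|>t$ for every $A\in\Acal$, so taking $A=C$ yields a contradiction.

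The argument then proceeds in two bootstrapping steps. First, I claim $\mu_p(\Acal_t)\meg \tfrac12$: otherwise $\mu_p(\Bcal)\meg \tfrac12$, Proposition \ref{p3.1}(i) applied to $\Bcal$ gives $\mu_p(\Bcal_t)\meg 1-\exp(-\tau)$, and the disjointness above forces $\mu_p(\Acal)\mik \exp(-\tau)$, contradicting the hypothesis. Second, now that $\Acal_t$ passes the median threshold of Proposition \ref{p3.1}, applying that proposition this time to $\Acal_t$ yields $\mu_p((\Acal_t)_t)\meg 1-\exp(-\tau)$; combined with the inclusion $(\Acal_t)_t\subseteq \Acal_{2t}$ (another use of the triangle inequality), this concludes the proof. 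The strict inequality in the conclusion is inherited from the strict hypothesis via the first step.

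No genuine obstacle arises; the entire argument is a standard dualization of the concentration inequality, applied once to the complement $\Bcal$ to guarantee that the enlargement $\Acal_t$ already sits above the median, and then once more to $\Acal_t$ itself. The only line that deserves a sanity check is the disjointness $\Bcal_t\cap\Acal=\emptyset$, after which everything else is routine.
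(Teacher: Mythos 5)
Your two-step bootstrap is the natural route and, since the paper leaves this corollary as an exercise (``the proof is straightforward, and it is left to the reader''), almost certainly coincides with the intended argument. The geometric input (disjointness $\Bcal_t\cap\Acal=\emptyset$), the first application of Proposition~\ref{p3.1} to the complement $\Bcal$ to force $\mu_p(\Acal_t)\meg\tfrac12$, and the second application to $\Acal_t$ followed by the inclusion $(\Acal_t)_t\subseteq\Acal_{2t}$ are all sound. The one place you overclaim is the final sentence asserting that the strict inequality in the conclusion is ``inherited'' from the strict hypothesis: the strictness of $\mu_p(\Acal)>\exp(-\tau)$ is consumed entirely in the contradiction of the first step, while the second step invokes Proposition~\ref{p3.1}, which only delivers the non-strict bound $\mu_p\big((\Acal_t)_t\big)\meg 1-\exp(-\tau)$. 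So as written your argument proves $\mu_p(\Acal_{2t})\meg 1-\exp(-\tau)$ rather than the stated $>$. This is a harmless discrepancy---tracing the single use of the corollary in the proof of Lemma~\ref{l3.3} shows that the non-strict version feeds into the non-strict target \eqref{e3.6} without any loss---but you should either weaken the conclusion to $\meg$ or provide a genuine argument for strictness instead of asserting that it carries over.
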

We are now ready to state the main result in this section.
\begin{lem} \label{l3.3}
Let $n$ be a positive integer, let $0<p\mik p'\mik 1-p$ with $p\mik \frac12$, and let $\alpha\mik pn$ be
a positive integer. Also let $\Fcal,\Gcal\subseteq \{0,1\}^n$.
\begin{enumerate}
\item[(i)] If\, $(\Fcal,\Gcal)\in\Forb{n}{[0,\alpha]}$, then
\begin{equation}\label{e3.6}
\mu_p(\Fcal)\,\mu_{p'}(\Gcal)\mik\exp\left(-\frac{\alpha^2}{24p(1-p)n}\right).
\end{equation}
\item[(ii)] If\, $(\Fcal,\Gcal)\in\Forb{n}{[pn-\alpha,n]}$, then
\begin{equation}\label{e3.7}
\mu_{p}(\Fcal)\,\mu_{p'}(\Gcal)\mik 2\exp\left(-\frac{\alpha^2}{24p(1-p)n}\right).
\end{equation}
\end{enumerate}
\end{lem}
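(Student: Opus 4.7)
The plan is as follows. For part~(i), I will argue by contradiction: Corollary~\ref{c3.2} will be used to enlarge $\Fcal$ into a family $\Fcal_\alpha$ that is cross-intersecting with $\Gcal$, and the standard cross-intersecting measure inequality will then produce the contradiction. Part~(ii) will be reduced to~(i) via a complementation trick combined with a size decomposition of~$\Fcal$, the factor~$2$ in~\eqref{e3.7} arising from adding the contributions of the two pieces. The main technical obstacle will be the ``middle'' size layers of $\Fcal$ in part~(ii), where the complementation alone does not immediately yield a forbidden initial interval.

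For \textbf{part~(i)}, suppose for contradiction that $\mu_p(\Fcal)\,\mu_{p'}(\Gcal)>\exp(-\alpha^2/(24p(1-p)n))$. Replacing~$\Fcal$ by its upper closure preserves $(\Fcal,\Gcal)\in\Forb{n}{[0,\alpha]}$ and can only increase $\mu_p(\Fcal)$, so we may assume $\Fcal$ is upper closed. Setting $t=\alpha/2$, both factors exceed $\exp(-t^2/(6p(1-p)n))$, and Corollary~\ref{c3.2}(i) applied to $\Fcal$ at parameter $p$ gives
\[
\mu_p(\Fcal_\alpha)\,>\,1-\exp\!\Bigl(-\tfrac{\alpha^2}{24p(1-p)n}\Bigr).
\]
A short check shows that $\Fcal_\alpha$ remains upper closed (given $H\in\Fcal_\alpha$ with witness $A\in\Fcal$ and any $H'\supseteq H$, the set $A'=A\cup(H'\setminus H)\in\Fcal$ witnesses $H'\in\Fcal_\alpha$, since $|H'\triangle A'|\mik|H\triangle A|\mik\alpha$) and that $(\Fcal_\alpha,\Gcal)$ is cross-intersecting: for $H\in\Fcal_\alpha$ with witness $A$ and $B\in\Gcal$,
\[
|H\cap B|\,\meg\,|A\cap B|-|A\setminus H|\,\meg\,(\alpha+1)-\alpha\,=\,1.
\]
Now the standard cross-intersecting measure inequality---by upper-closedness of~$\Fcal_\alpha$, the complement map $B\mapsto[n]\setminus B$ sends $\Gcal$ into $\{0,1\}^n\setminus\Fcal_\alpha$ and transforms $\mu_{p'}$ into $\mu_{1-p'}$---yields $\mu_{1-p'}(\Fcal_\alpha)+\mu_{p'}(\Gcal)\mik 1$. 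Combined with Fact~\ref{f2.2} (which gives $\mu_p(\Fcal_\alpha)\mik\mu_{1-p'}(\Fcal_\alpha)$, valid because $\Fcal_\alpha$ is upper closed and $p\mik 1-p'$), this produces $\mu_p(\Fcal_\alpha)+\mu_{p'}(\Gcal)\mik 1$. But the two lower bounds above sum to strictly more than~$1$, a contradiction.

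For \textbf{part~(ii)}, write $\Fcal=\Fcal^{\mathrm{sm}}\sqcup\Fcal^{\mathrm{lg}}$ with $\Fcal^{\mathrm{lg}}=\Fcal\cap[n]^{\meg pn}$. For $A\in\Fcal^{\mathrm{lg}}$ and any $B\in\Gcal$, the hypothesis $|A\cap B|\mik pn-\alpha-1$ together with $|A|\meg pn$ forces $|A\cap([n]\setminus B)|\meg\alpha+1$; hence $(\Fcal^{\mathrm{lg}},\{[n]\setminus B:B\in\Gcal\})\in\Forb{n}{[0,\alpha]}$. Applying part~(i) at the parameter pair $(p,1-p')$---valid since $p\mik 1-p'\mik 1-p$---and using the identity $\mu_{1-p'}(\{[n]\setminus B:B\in\Gcal\})=\mu_{p'}(\Gcal)$ yields $\mu_p(\Fcal^{\mathrm{lg}})\,\mu_{p'}(\Gcal)\mik\exp(-\alpha^2/(24p(1-p)n))$. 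The contribution from $\Fcal^{\mathrm{sm}}\subseteq[n]^{<pn}$ splits further: the layers with $|A|\mik pn-\alpha-1$ are bounded by $\mu_p([n]^{\mik pn-\alpha-1})\mik\exp(-\alpha^2/(6p(1-p)n))$, obtained from Lemma~\ref{l2.3}(i) at parameter $1-p$ via complementation, while the ``middle'' layers $k\in[pn-\alpha,pn)$ demand a finer, layer-by-layer treatment---applying~(i) within each $\binom{[n]}{k}$ with the shifted forbidden-interval parameter $\alpha_k=k-pn+\alpha$ and combining with the peak bounds on $\mu_p(\binom{[n]}{k})$---which is the main technical step. Adding the pieces yields the factor of~$2$ in~\eqref{e3.7}.
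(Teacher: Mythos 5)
Part~(i) is correct and essentially the paper's argument: it rests on the same combination of Corollary~\ref{c3.2}, the disjointness of $\Fcal_\alpha$ from the complementary family, and a monotonicity step via Fact~\ref{f2.2} (you apply the latter to $\Fcal_\alpha$ at parameter $1-p'$ where the paper applies it to $\Gcal$ at parameter $1-p$, but this is cosmetic).

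Part~(ii) has a real gap, and you have flagged exactly where it is. The complementation $|A\cap([n]\setminus B)|=|A|-|A\cap B|>\alpha$ needs $|A|\meg pn$, so it only handles $\Fcal^{\mathrm{lg}}$. On a middle layer $k=pn-j$ with $0<j\mik\alpha$, complementation yields only a forbidden initial interval $[0,\alpha-j]$, so part~(i) gives $\mu_p\big(\Fcal\cap\binom{[n]}{k}\big)\,\mu_{p'}(\Gcal)\mik\exp\!\big(-\tfrac{(\alpha-j)^2}{24p(1-p)n}\big)$, while the layer ``peak bound'' gives $\mu_p\big(\binom{[n]}{k}\big)\lesssim\exp\!\big(-\tfrac{j^2}{2p(1-p)n}\big)$. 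The pointwise minimum of these two upper bounds, maximized over $j$, is attained near $j\approx\alpha/(1+2\sqrt 3)$ and equals roughly $\exp\!\big(-\tfrac{\alpha^2}{40p(1-p)n}\big)$; thus a \emph{single} middle layer already exceeds the target $2\exp\!\big(-\tfrac{\alpha^2}{24p(1-p)n}\big)$ once $\alpha\gtrsim\sqrt{p(1-p)n}$, and you must still sum over~$j$. So the proposed layer-by-layer route cannot recover the stated constant. The structural point you are missing is that your decomposition restricts only $\Fcal$ by size and treats $\Gcal$ in bulk. The paper instead splits \emph{both} families at the threshold $pn-\alpha/2$, observes that if $|F|,|G|>pn-\alpha/2$ and $|F\cap G|<pn-\alpha$ then $|F\triangle G|=|F|+|G|-2|F\cap G|>\alpha$, and applies Corollary~\ref{c3.2} directly to $\Fcal^{>pn-\alpha/2}$---not via a reduction to part~(i)---to conclude that $\Gcal^{>pn-\alpha/2}$ has small $\mu_p$-measure. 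That symmetric-difference observation, which works as soon as both sets have size $>pn-\alpha/2$ rather than $\meg pn$, is the key idea your proposal lacks.
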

\begin{proof}
We start with the proof of part (i). Clearly, we may assume that the pair $(\Fcal,\Gcal)$ is
optimal, in the sense that it maximizes the quantity in the left-hand-side of \eqref{e3.6};
consequently, we may assume that $\Fcal$ and $\Gcal$ are both upwards closed.
Next observe that if $\mu_{p}(\Fcal)\mik \exp\big(-\frac{\alpha^2}{24p(1-p)n}\big)$,
then \eqref{e3.6} is straightforward. Therefore, we may also assume that
$\mu_{p}(\Fcal)>\exp\big(-\frac{\alpha^2}{24p(1-p)n}\big)$. By  Corollary \ref{c3.2} applied for ``$t=\frac{\alpha}{2}$",
we obtain that $\mu_{p}(\Fcal_{\alpha})>1-\exp\big(-\frac{\alpha^2}{24p(1-p)n}\big)$, where
$\Fcal_{\alpha}\coloneqq \{H\subseteq [n]: \exists A\in\Fcal \text{ with } |H\bigtriangleup A|\mik \alpha\}$.
Set $\overline{\Gcal}\coloneqq \{[n]\setminus G:G\in\Gcal\}$, and note that for every $F\in\Fcal$ and
every $G\in\Gcal$ we have that $|F\bigtriangleup([n]\setminus G)|>\alpha$. This yields that
$\overline{\Gcal}\cap\Fcal_{\alpha}=\emptyset$, which in turn implies that  $\mu_{p}(\overline{\Gcal})<\exp\big(-\frac{\alpha^2}{24p(1-p)n}\big)$. Finally,
since $\Gcal$ is upwards closed and $p'\mik 1-p$, by Fact~\ref{f2.2}, we conclude that
\begin{equation} \label{e3.8}
\exp\left(-\frac{\alpha^2}{24p(1-p)n}\right)> \mu_{p}(\overline{\Gcal})=
\mu_{1-p}(\Gcal) \meg \mu_{p'}(\Gcal) \meg \mu_{p}(\Fcal)\, \mu_{p'}(\Gcal).
\end{equation}

We proceed to the proof of part (ii). As before, we may assume that the pair $(\Fcal,\Gcal)$ is optimal
and, hence, that $\Fcal$ and $\Gcal$ are both downwards closed. Consequently, by Fact~\ref{f2.2},
it is enough to show that
\begin{equation} \label{e3.9}
\mu_{p}(\Fcal)\, \mu_{p}(\Gcal)\mik 2 \exp\left(-\frac{\alpha^2}{24p(1-p)n}\right).
\end{equation}
Setting
\begin{enumerate}
\item[$\bullet$] $\Fcal^{\mik pn-\frac{\alpha}{2}}\coloneqq \{A\in \Fcal: |A|\mik pn-\frac{\alpha}{2}\}$,
$\Fcal^{>pn-\frac{\alpha}{2}}\coloneqq \{A\in \Fcal : |A|>pn-\frac{\alpha}{2}\}$, and
\item[$\bullet$] $\Gcal^{\mik pn-\frac{\alpha}{2}}\coloneqq \{B\in \Gcal : |B|\mik pn-\frac{\alpha}{2}\}$,
$\Gcal^{>pn-\frac{\alpha}{2}}\coloneqq \{B\in \Gcal : |B|>pn-\frac{\alpha}{2}\}$,
\end{enumerate}
by part (ii) of Lemma \ref{l2.3} applied for $\mu_{1-p}$, we see that
\begin{equation} \label{e3.10}
\max\big\{ \mu_{p}\big(\Fcal^{\mik pn-\frac{\alpha}{2}}\big),
\mu_{p}\big(\Gcal^{\mik pn-\frac{\alpha}{2}}\big)\big\} \mik
\exp\left(-\frac{\alpha^2}{8p(1-p)n}\right);
\end{equation}
thus, if $\mu_{p}\big(\Fcal^{> pn-\frac{\alpha}{2}}\big)<\exp\big(-\frac{\alpha^2}{24p(1-p)n}\big)$,
then the result follows from \eqref{e3.10}. So, suppose that
$\mu_{p}\big(\Fcal^{>pn-\frac{\alpha}{2}}\big)\meg \exp\big(-\frac{\alpha^2}{24p(1-p)n}\big)$.
By Corollary~\ref{c3.2} again applied for ``$t=\frac{\alpha}{2}$", we obtain that
\begin{equation} \label{e3.11}
\mu_{p}\big(\Fcal^{>pn-\frac{\alpha}{2}}_\alpha\big)\meg
1-\exp\left(-\frac{\alpha^2}{24p(1-p)n}\right),
\end{equation}
where $\Fcal^{>pn-\frac{\alpha}{2}}_\alpha\coloneqq \{H\subseteq [n]:
\exists A\in \Fcal^{>pn-\frac{\alpha}{2}} \text{ with } |H\bigtriangleup A|\mik \alpha\}$.
Now note that, since $(\Fcal,\Gcal)\in\Forb{n}{[pn-\alpha,n]}$, for every
$F\in \Fcal^{> pn-\frac{\alpha}{2}}$ and every $G\in \Gcal^{> pn-\frac{\alpha}{2}}$ we have
\begin{equation} \label{e3.12}
|F\bigtriangleup G|=|F|+|G|-2|F\cap G|>\alpha.
\end{equation}
This observation yields that $\Gcal^{> pn-\frac{\alpha}{2}}\cap \Fcal^{> pn-\frac{\alpha}{2}}_\alpha=\emptyset$,
and therefore, by \eqref{e3.11}, we obtain that
$\mu_{p}\big(\Gcal^{> pn-\frac{\alpha}{2}}\big)<\exp\big(-\frac{\alpha^2}{24p(1-p)n}\big)$.
Inequality \eqref{e3.7} follows from this estimate and~\eqref{e3.10}.
The proof of Lemma \ref{l3.3} is thus completed.
\end{proof}


\section{The widening lemma} \label{sec4}

\numberwithin{equation}{section}

The section is devoted to the proof of the following lemma. (Recall that for every family
$\mathcal{A}\subseteq \{0,1\}^n$ ($n\meg 2$) by $\Acal_0$ and $\Acal_1$ we denote the
sections of $\Acal$ defined in \eqref{e2.1}; we also recall that we view
$\mathcal{A}_0$ and $\mathcal{A}_1$ as families in $\{0,1\}^{n-1}$.)
\begin{lem}[Widening lemma] \label{l4.1}
Let $n\meg 2$ be an integer, let $0<p\mik p'\mik \frac12$, and let $\Fcal,\Gcal\subseteq \{0,1\}^n$
be nonempty. Also let\, $0<\delta<\frac{1}{10}$, and assume that
\begin{equation} \label{e4.1}
\mu_p(\Fcal_1)\,\mu_{p'}(\Gcal_1)  \mik (1+\delta)\, \mu_p(\Fcal)\,\mu_{p'}(\Gcal)
\end{equation}
and
\begin{equation} \label{e4.2}  \max\big\{ \mu_p(\Fcal_0)\, \mu_{p'}(\Gcal_0  \cup\Gcal_1), \,
\mu_p(\Fcal_0\cup\Fcal_1)\, \mu_{p'}(\Gcal_0) \big\}  \mik \Big(1+\frac{p}{1-p}\delta\Big)\,
\mu_p(\Fcal)\, \mu_{p'}(\Gcal).
\end{equation}
Then we have
\begin{equation} \label{e4.3}
\max\big\{ \mu_p(\Fcal_1)\,\mu_{p'}(\Gcal_0\cap\Gcal_1), \,
\mu_p(\Fcal_0\cap\Fcal_1)\,\mu_{p'}(\Gcal_1)\big\} >
\Big(1-\delta-2\frac{p}{1-p}\delta^2\Big)\, \mu_p(\Fcal)\, \mu_{p'}(\Gcal).
\end{equation}
\end{lem}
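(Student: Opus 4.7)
The plan is to convert the lemma into pure algebra and argue by contradiction. Adopt the shorthand $a_i \coloneqq \mu_p(\Fcal_i)$, $b_i \coloneqq \mu_{p'}(\Gcal_i)$, $u \coloneqq \mu_{p'}(\Gcal_0 \cup \Gcal_1)$, $v \coloneqq \mu_{p'}(\Gcal_0 \cap \Gcal_1)$, $u' \coloneqq \mu_p(\Fcal_0 \cup \Fcal_1)$, $v' \coloneqq \mu_p(\Fcal_0 \cap \Fcal_1)$, and set $q \coloneqq p/(1-p)$, $M \coloneqq \mu_p(\Fcal)\mu_{p'}(\Gcal)$. The identities in constant use are the inclusion--exclusion relations $u+v = b_0+b_1$ and $u'+v' = a_0+a_1$, together with the convex decompositions $\mu_p(\Fcal) = (1-p)a_0 + pa_1$ and $\mu_{p'}(\Gcal) = (1-p')b_0 + p'b_1$. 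Since $p, p' \mik \tfrac12$ one has the pointwise inequalities $u \meg \max\{b_0,b_1\} \meg \mu_{p'}(\Gcal)$ and $u' \meg \mu_p(\Fcal)$; these immediately give $\mu_p(\Fcal)\cdot u \meg M$ and $u'\cdot\mu_{p'}(\Gcal)\meg M$.

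The first substantive step produces a pair of \emph{swap} lower bounds. Writing $\mu_p(\Fcal)\cdot u = (1-p)a_0 u + pa_1 u \meg M$ and using \eqref{e4.2} to bound $a_0 u \mik (1+q\delta)M$, the cancellation $(1-p)q = p$ yields $pa_1 u \meg M - (1-p)(1+q\delta)M = p(1-\delta)M$, whence $a_1 u \meg (1-\delta)M$. An entirely parallel derivation starting from $u'\cdot\mu_{p'}(\Gcal)\meg M$ produces $u'b_1 \meg (1-\delta)M$; here the hypothesis $p\mik p'$ enters precisely to guarantee $(1-p')q \mik p'$, which is what the analogue of the cancellation requires.

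I would then argue by contradiction: suppose $a_1 v \mik X$ and $v'b_1 \mik X$ with $X \coloneqq (1-\delta-2q\delta^2)M$. Expanding $\mu_p(\Fcal)\cdot v = (1-p)a_0 v + pa_1 v$ and using the pointwise bound $v \mik b_1$, the inequality $a_0 b_1 \mik a_0 u \mik (1+q\delta)M$ from \eqref{e4.2}, and the assumption $a_1 v \mik X$, the same identity $(1-p)q=p$ gives the refined upper bound $\mu_p(\Fcal)\cdot v \mik M(1-2pq\delta^2)$; the symmetric calculation yields $\mu_{p'}(\Gcal)\cdot v' \mik M(1-2pq\delta^2)$. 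Combining these with the four-term decomposition $M = (1-p)(1-p')a_0 b_0 + (1-p)p'a_0 b_1 + p(1-p')a_1 b_0 + pp'a_1 b_1$---whose last three summands are bounded above using \eqref{e4.1} and \eqref{e4.2}, thereby forcing $a_0 b_0 \meg M(1-O(\delta))$ with an explicit $p,p'$-dependent constant---should produce an inequality strictly incompatible with the assumption.

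I expect the main obstacle to be precisely this final reconciliation. The quadratic correction $-2q\delta^2$ in the conclusion is delicately tuned: it arises because the tight slack $\delta$ in \eqref{e4.1} and the loose slack $q\delta$ in \eqref{e4.2} must cancel at first order in $\delta$, leaving only the $\delta^2$ remainder visible in the conclusion. Tracking the interaction of the factors $(1-p)$, $p$ and $q$ through the swap step, the upper bounds on $\mu_p(\Fcal)\cdot v$ and $\mu_{p'}(\Gcal)\cdot v'$, and the four-term decomposition---so that the quadratic margin survives all these intermediate bounds---is the delicate algebraic bookkeeping this lemma requires, and it is exactly what the asymmetric choice of exponents $\delta$ versus $q\delta$ in the hypothesis was engineered to accommodate.
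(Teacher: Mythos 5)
Your normalization and the ``swap'' step are correct and genuinely in the spirit of what the lemma needs. In particular, from $\mu_p(\Fcal)\,u \meg M$ and $a_0 u \mik (1+q\delta)M$ the cancellation $(1-p)q=p$ does give $a_1 u \meg (1-\delta)M$, and the dual argument (using $p\mik p'$ exactly to get $(1-p')q\mik p'$) gives $u'b_1\meg(1-\delta)M$. Those two inequalities are valid and non-obvious.

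The gap is in the closing step, and it is not a small one. The two bounds you derive from the contradiction hypothesis, namely $\mu_p(\Fcal)\,v \mik M(1-2pq\delta^2)$ and $\mu_{p'}(\Gcal)\,v' \mik M(1-2pq\delta^2)$, are only $O(\delta^2)$ improvements over the \emph{trivial} bounds $\mu_p(\Fcal)\,v\mik M$ and $\mu_{p'}(\Gcal)\,v'\mik M$ (trivial because $v\mik\min\{b_0,b_1\}\mik\mu_{p'}(\Gcal)$ and $v'\mik\mu_p(\Fcal)$). To contradict them you would need complementary \emph{lower} bounds of the same form, and none of the ingredients you assemble---the hypotheses (4.1), (4.2), the swap bounds, or the four-term decomposition of $M$---produces such a lower bound: the swap bounds control $a_1 u$ and $u'b_1$, not $a_1 v$ or $v'b_1$, and since $v = b_0+b_1-u$ a \emph{lower} bound on $a_1 u$ pushes the estimate of $a_1 v$ the wrong way. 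Moreover your plan ``forcing $a_0 b_0 \meg M(1-O(\delta))$'' is true but again gives an inequality in the unhelpful direction. In short, the proposal has not shown that the two quantities in the $\max$ of \eqref{e4.3} cannot both be small, and this is the whole content of the lemma.

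What the paper does instead (Sublemma~\ref{s4.2}) is derive the exact two-term identities
\[
\frac{a_1 v + a_0 u}{M} \;=\; 2 + \tfrac{1-2p}{1-p}\,x + \tfrac{1-2p'}{1-p'}\,z(1+x) - \tfrac{xy}{1-p},
\qquad
\frac{b_1 v' + b_0 u'}{M} \;=\; 2 + \tfrac{1-2p}{1-p}\,x(1+z) + \tfrac{1-2p'}{1-p'}\,z - \tfrac{zw}{1-p'},
\]
with $x = a_1/\mu_p(\Fcal)-1$, $z = b_1/\mu_{p'}(\Gcal)-1$, etc.  Subtracting the upper bound $a_0 u/M\mik 1+q\delta$ (resp.\ $b_0 u'/M\mik 1+q\delta$) from the left side converts these into genuine \emph{lower} bounds on $a_1 v/M$ (resp.\ $b_1 v'/M$), and then a four-way case analysis on the signs of $x$ and $z$ (which decides \emph{which} of the two quantities is large, matching the $\max$ in the conclusion) shows the right-hand side is $\meg 1-\delta-2q\delta^2$ in at least one of the two displays. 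No analogue of this identity, nor of the sign-based case split, appears in your sketch; that is exactly the missing mechanism for transferring information from $u$ to $v$, and it is where the asymmetric $\delta$ versus $q\delta$ slacks actually do their work.
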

As we have noted in Subsection \ref{subsec1.3}, Lemma \ref{l4.1} is a crucial ingredient
of the proof of Theorem \ref{main}. That said, we advise the reader to skip its proof
at first reading, and return to this section once the basic steps of the proof
of Theorem \ref{main} have been properly understood.

We shall deduce Lemma \ref{l4.1} from the following, purely analytical, result.
\begin{sbl} \label{s4.2}
Let $x,x',y,y',z,z',w,w',p,p',\delta\in[-1,1]$ be real numbers with $y,w\meg 0$, $y',w'\mik 0$,
$\max\{x,x'\}\mik w$, $\max\{z,z'\}\mik y$, $0<p\mik p'\mik \frac{1}{2}$ and $0<\delta<\frac{1}{10}$.
Assume that the following identities
\begin{align}
\label{e4.4} x'=-\frac{p}{1-p}x,  & \ \ \ \ \ \ \ \ w+w'=x+x'=\frac{1-2p}{1-p}x, \\
\label{e4.5} z'=-\frac{p'}{1-p'}z, & \ \ \ \ \ \ \ \ y+y'=z+z'=\frac{1-2p'}{1-p'}z,
\end{align}
as well as the following inequalities
\begin{align}
\label{e4.6} (1+x)(1+z)& \mik 1+\delta, \\
\label{e4.7} (1+x')(1+y) & \mik 1+\frac{p}{1-p}\delta,\\
\label{e4.8} (1+z')(1+w) & \mik 1+\frac{p}{1-p}\delta,
\end{align}
are satisfied. Then at least one of the following inequalities
\begin{align}
\label{e4.9} (1+x)(1+y') & > 1-\delta-2\frac{p}{1-p}\delta^2, \\
\label{e4.10} (1+z)(1+w')& > 1-\delta-2\frac{p}{1-p}\delta^2,
\end{align}
must also be satisfied.
\end{sbl}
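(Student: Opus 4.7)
The plan is proof by contradiction: suppose both (4.9) and (4.10) fail. Set $\lambda \coloneqq p/(1-p)$ and $\lambda' \coloneqq p'/(1-p')$, so that $0 < \lambda \mik \lambda' \mik 1$, and the identities (4.4)--(4.5) read $x'=-\lambda x$, $z'=-\lambda'z$, $y+y'=(1-\lambda')z$, $w+w'=(1-\lambda)x$. The first move is to \emph{add} the inequality (4.7) to the assumed negation of (4.9) and expand using $y'=(1-\lambda')z-y$. After cancellation this sum collapses to
\[ (1-\lambda)(x+\delta) + (1-\lambda')(1+x)\,z + 2\lambda\delta^2 \mik (1+\lambda)\,xy. \quad (\ast) \]
A parallel combination of (4.8) with the negation of (4.10), using $w'=(1-\lambda)x-w$, yields
\[ (1-\lambda)(1+z)\,x + (1-\lambda')\,z + (1-\lambda)\delta + 2\lambda\delta^2 \mik (1+\lambda')\,zw. \quad (\ast\ast) \]

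On the other hand, hypothesis (4.7) on its own upper-bounds $y \mik \lambda(x+\delta)/(1-\lambda x)$, and (4.8) gives $w \mik (\lambda\delta+\lambda'z)/(1-\lambda'z)$. In the main sign case $x,z \meg 0$, substituting these bounds into $(\ast)$ and clearing the denominator $(1-\lambda x)$ reduces it to
\[ (1-\lambda')(1+x)(1-\lambda x)\,z + 2\lambda\delta^2(1-\lambda x) \mik (x+\delta)\bigl[\,2\lambda x - (1-\lambda)\,\bigr], \]
and the analogous move on $(\ast\ast)$ produces a sibling inequality. In each, the left-hand side is non-negative for $x,z \meg 0$, so the bracket on the right must be non-negative too, forcing $x$ to lie above the positive root of an explicit quadratic with leading term $(1-\lambda)/(2\lambda)$ plus a strictly positive $O(\delta)$ correction, and symmetrically $z$ above an analogous threshold depending on $\lambda'$. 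The last step is to feed these lower bounds into (4.6), which rearranges to $x+z+xz\mik\delta$: since $xz \meg 0$, one has $x+z \mik \delta$, but the lower bounds on $x,z$ always sum to strictly more than $\delta$. For example, in the extremal regime $\lambda=\lambda'=1$ (i.e., $p=p'=\tfrac12$), a direct calculation gives $x,z \meg \tfrac{\sqrt 5-1}{2}\delta + O(\delta^2)$, whence $x+z\meg 1.236\,\delta > \delta$; for smaller $\lambda$ the thresholds are even larger. This delivers the contradiction in the main case.

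The remaining sign cases ($x<0$ or $z<0$) I would dispose of using the side conditions $w \meg \max\{x, -\lambda x\}$ and $y \meg \max\{z, -\lambda' z\}$, which flip signs in $(\ast)$, $(\ast\ast)$ before the same strategy is rerun; in several of these cases $(\ast)$ or $(\ast\ast)$ on its own is already infeasible (its right-hand side becomes negative while the left-hand side stays non-negative), short-circuiting the argument without any appeal to (4.6). The main obstacle I anticipate is bookkeeping: one must verify that the coefficient $2\lambda\delta^2$ in (4.9)--(4.10) is tuned precisely so that the sum of the derived lower bounds on $x$ and $z$ strictly exceeds $\delta$, uniformly across $0 < \lambda \mik \lambda' \mik 1$ and $0 < \delta < \tfrac{1}{10}$. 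That higher-order quadratic term seems to be exactly what absorbs the nonlinear $xz$ contribution in (4.6) in the delicate regime $\lambda \to 1$, where all the linear margins degenerate.
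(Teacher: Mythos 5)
Your proposal and the paper share the same algebraic foundation---your $(\ast)$ and $(\ast\ast)$ are precisely the paper's identities \eqref{e4.13}, \eqref{e4.14} combined with \eqref{e4.7}, \eqref{e4.8}, which yield the paper's \eqref{e4.15} and \eqref{e4.16}, plus the negation of the conclusion---but the way you exploit them is genuinely different. The paper takes the dichotomy from \eqref{e4.6} (either $x<\delta/2$ or $z<\delta/2$), uses it to clean up the bounds on $y,w$ in Claim \ref{c4.3}, and then proves \eqref{e4.9} or \eqref{e4.10} \emph{directly} in each of four sign cases, splitting once more on $p\lessgtr 3/7$ inside Case 1. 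You instead argue by contradiction, keep the full rational bounds $y\mik \lambda(x+\delta)/(1-\lambda x)$ and $w\mik(\lambda\delta+\lambda'z)/(1-\lambda'z)$, and drive $x,z$ above the positive roots of a pair of coupled quadratics; the payoff is that you avoid the $p\lessgtr 3/7$ sub-subcasing. I checked your main-case reduction: the worst case is indeed $\lambda=\lambda'=1$, where both quadratics become $x^2+(\delta+\delta^2)x-\delta^2\meg 0$, and one verifies that twice the positive root exceeds $\delta$ exactly when $\delta<\frac12$, which covers the range. The thresholds are monotone in the right direction as $\lambda,\lambda'$ decrease (the constant terms become more negative), so the uniformity you flag as ``bookkeeping'' does in fact hold. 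So the main case works.

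The genuine gap is the case $x<0$, $z<0$. You write that the side conditions ``flip signs in $(\ast),(\ast\ast)$'' and that in several residual cases one of them is already infeasible because its right-hand side goes negative while its left-hand side stays nonnegative. That short-circuit works for $(x<0,z\meg 0)$ via $(\ast)$ and for $(x\meg 0,z<0)$ via $(\ast\ast)$, but it fails for $x<0,z<0$: there the term $(1-\lambda')(1+x)z$ on the left of $(\ast)$ is negative, so the left-hand side is \emph{not} obviously nonnegative and no short-circuit occurs. To close this case one needs the paper's trick, which is not a sign flip but a different use of a side condition: from $z'\mik y$ (note $z'=-\lambda'z>0$ here) one gets $(1+x')(1+z')\mik(1+x')(1+y)\mik 1+\frac{p}{1-p}\delta$, hence $x'+z'+x'z'<\frac{p}{1-p}\delta$, which after rewriting $(1-\lambda)x+(1-\lambda')z(1+x)$ in the variables $x',z'$ gives the lower bound $\meg-(1-\lambda)\delta$ (the paper's \eqref{e4.29}); plugged into $(\ast)$ this forces $0\mik -2\lambda\delta^2$, a contradiction. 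That chain is substantive and not implied by anything you wrote, so as it stands your Case 4 is unproved.
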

We postpone the proof of Sublemma \ref{s4.2} to the end of this section. At this point, let us
give the proof of Lemma \ref{l4.1}.
\begin{proof}[Proof of Lemma \emph{\ref{l4.1}}]
Notice, first, that
\begin{equation} \label{e4.11}
\mu_p(\Fcal)=p\mu_p(\Fcal_1) +(1-p)\mu_p(\Fcal_0) \ \ \text{ and } \ \
\mu_{p'}(\Gcal)=p'\mu_{p'}(\Gcal_1) +(1-p')\mu_{p'}(\Gcal_0).
\end{equation}
Next, define the real numbers $x,x',y,y',z,z',w,w'$ by setting
\begin{align*}
\frac{\mu_p(\Fcal_1)}{\mu_p(\Fcal)}&=1+x, & \frac{\mu_{p'}(\Gcal_1)}{\mu_{p'}(\Gcal)} & = 1+z,\\
\frac{\mu_p(\Fcal_0)}{\mu_p(\Fcal)}&=1+x' ,&	\frac{\mu_{p'}(\Gcal_0)}{\mu_{p'}(\Gcal)} & =1+z',\\
\frac{\mu_p(\Fcal_0\cup\Fcal_1)}{\mu_p(\Fcal)}& = 1+w, & \frac{\mu_{p'}(\Gcal_0\cup\Gcal_1)}{\mu_{p'}(\Gcal)} & =1+y,\\
\frac{\mu_p(\Fcal_0\cap\Fcal_1)}{\pmeasure{\Fcal}}&=1+w', & \frac{\mu_{p'}(\Gcal_0\cap\Gcal_1)}{\mu_{p'}(\Gcal)} & =1+y'.\\
\end{align*}
With these choices, the result follows from Sublemma \ref{s4.2} after taking into account the identities in \eqref{e4.11}.
\end{proof}

\subsection{Proof of Sublemma \ref{s4.2}} \label{subsec4.1}

First observe that, by \eqref{e4.4} and \eqref{e4.5}, we have
\begin{equation} \label{e4.12}
xy'+x'y =\frac{1-2p'}{1-p'}xz-\frac{xy}{1-p} \ \ \ \text{ and } \ \ \
zw'+z'w=\frac{1-2p}{1-p}xz-\frac{zw}{1-p'}
\end{equation}
that yields that
\begin{align}
\label{e4.13} & (1+x)(1+y')+(1+x')(1+y)=2+\frac{1-2p}{1-p}x+\frac{1-2p'}{1-p'}z(1+x)-\frac{xy}{1-p}, \\
\label{e4.14} & (1+z)(1+w')+(1+z')(1+w)=2+\frac{1-2p}{1-p}x(1+z)+\frac{1-2p'}{1-p'}z-\frac{zw}{1-p'}.
\end{align}
Combining these equalities with \eqref{e4.7} and \eqref{e4.8}, we obtain that
\begin{align} \label{e4.15}
(1+x)(1+y') \meg 1-\frac{p}{1-p}\delta +\frac{1-2p}{1-p}x+\frac{1-2p'}{1-p'}z(1+x)-\frac{xy}{1-p}
\end{align}
and
\begin{align} \label{e4.16}
(1+z)(1+w') \meg 1-\frac{p}{1-p}\delta + \frac{1-2p}{1-p}x(1+z)+\frac{1-2p'}{1-p'}z-\frac{zw}{1-p'}.
\end{align}
Next observe that since $y$ and $w$ are nonnegative, by \eqref{e4.7} and \eqref{e4.8}, we have
\begin{equation} \label{e4.17}
x'\mik\frac{p}{1-p}\delta \ \ \ \text{ and } \ \ \ z'\mik \frac{p}{1-p}\delta
\end{equation}
that yields that
\begin{equation} \label{e4.18}
x \meg-\delta \ \ \ \ \text{ and } \ \ \ \ z\meg-\delta.
\end{equation}
By \eqref{e4.6}, we have that either
\begin{enumerate}
\item[($\mathcal{A}1$)] $x<\frac{\delta}{2}$, or
\item[($\mathcal{A}2$)] $z<\frac{\delta}{2}$.
\end{enumerate}
\begin{claim} \label{c4.3}
If $x<\frac{\delta}{2}$, then
\begin{equation} \label{e4.19}
y\mik 2\frac{p}{1-p}\delta.
\end{equation}
On the other hand, if $z<\frac{\delta}{2}$, then
\begin{equation} \label{e4.20}
w\mik  \frac{\frac{p}{1-p}+\frac{p'}{2(1-p')}}{1-\frac{p'}{2(1-p')}\delta}\, \delta.
\end{equation}
\end{claim}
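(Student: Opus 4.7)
The proof is a bookkeeping exercise applying the identities \eqref{e4.4} and \eqref{e4.5} to the hypotheses \eqref{e4.7} and \eqref{e4.8}; the plan is to treat each inequality in turn by a direct substitution and then simplify using the sizes prescribed by the hypotheses on $x$ and $z$.

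For inequality \eqref{e4.19}, I substitute $x' = -\frac{p}{1-p}x$ from \eqref{e4.4} into \eqref{e4.7} to obtain $\bigl(1 - \frac{p}{1-p}x\bigr)(1+y) \mik 1 + \frac{p}{1-p}\delta$. Since $\frac{p}{1-p} \mik 1$ (as $p \mik \tfrac12$) and $x < \delta/2 < 1/20$, the left-hand factor is at least $1-\delta/2 > 3/4$, so I may solve for $y$ to obtain
\begin{equation*}
y \mik \frac{\frac{p}{1-p}(\delta+x)}{1 - \frac{p}{1-p}x}.
\end{equation*}
Combining the upper bound $\delta+x < 3\delta/2$ on the numerator (from the hypothesis $x < \delta/2$) with the lower bound $3/4$ on the denominator yields $y \mik 2\frac{p}{1-p}\delta$.

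The argument for \eqref{e4.20} is entirely parallel: substituting $z' = -\frac{p'}{1-p'}z$ from \eqref{e4.5} into \eqref{e4.8} and solving for $w$ gives
\begin{equation*}
w \mik \frac{\frac{p}{1-p}\delta + \frac{p'}{1-p'}z}{1 - \frac{p'}{1-p'}z},
\end{equation*}
and the hypothesis $z < \delta/2$ then bounds the denominator below by $1 - \frac{p'}{2(1-p')}\delta$, giving the desired form.

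The only subtlety to watch for is sign handling, since $x$ and $z$ need not be positive: they can be as small as $-\delta$ by \eqref{e4.18}. In those cases the factors $1 - \frac{p}{1-p}x$ and $1 - \frac{p'}{1-p'}z$ are larger than $1$, so the respective bounds are even easier to verify. There is no serious obstacle here — the whole proof reduces to two short computations, and the careful choice of numerical constants in the hypotheses ($\delta<1/10$, $p\mik 1/2$) is precisely what makes the simplifications clean.
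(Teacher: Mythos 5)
Your approach is exactly the paper's: substitute $x'=-\tfrac{p}{1-p}x$ into \eqref{e4.7} and $z'=-\tfrac{p'}{1-p'}z$ into \eqref{e4.8}, solve for $y$ resp.\ $w$, and bound using $x<\delta/2$ resp.\ $z<\delta/2$. Part (i) is correct. For part (ii), however, your computation yields $w\mik\frac{\frac{p}{1-p}\delta+\frac{p'}{1-p'}z}{1-\frac{p'}{2(1-p')}\delta}$, whose numerator carries the coefficient $\frac{p'}{1-p'}$ on $z$, whereas \eqref{e4.20} as printed has $\frac{p'}{2(1-p')}$. These are not the same, and the printed bound does not follow from the derived one (already for $p=p'$ with $z$ slightly below $\delta/2$ the two disagree). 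The paper's own stated reasoning---monotonicity of $t\mapsto\frac{\frac{p}{1-p}\delta+\frac{p'}{1-p'}z}{1-\frac{p'}{1-p'}t}$ with the $z$ in the numerator held fixed---also produces your form, and your form suffices for every downstream use in the proof of Sublemma~\ref{s4.2}; so the printed coefficient in the numerator of \eqref{e4.20} looks like a typo. You should have flagged the mismatch rather than asserting the computation ``gives the desired form.''

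A second, genuine gap: your ``sign handling'' remark covers only the denominators. Replacing the denominator $1-\frac{p'}{1-p'}z$ by the smaller quantity $1-\frac{p'}{2(1-p')}\delta$ enlarges the fraction only when the numerator $\frac{p}{1-p}\delta+\frac{p'}{1-p'}z$ is nonnegative, and \eqref{e4.18} alone does not give this: $z\meg-\delta$ yields numerator $\meg\big(\frac{p}{1-p}-\frac{p'}{1-p'}\big)\delta$, which is $\mik 0$ whenever $p<p'$. What you need is \eqref{e4.17}, i.e.\ $z'\mik\frac{p}{1-p}\delta$, equivalently $\frac{p'}{1-p'}z\meg-\frac{p}{1-p}\delta$, which makes the numerator nonnegative. (Part (i) is unaffected, since there the numerator $\frac{p}{1-p}(\delta+x)$ is nonnegative directly from \eqref{e4.18}.)
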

\begin{proof}[Proof of Claim \emph{\ref{c4.3}}]
First assume that $x<\frac{\delta}{2}$. Then, by \eqref{e4.7}, we have
\begin{equation} \label{e4.21}
1+y\mik \frac{1+\frac{p}{1-p}\delta}{1+x'}=
1+\frac{\frac{p}{1-p}\delta-x'}{1+x'}=
1+\frac{\frac{p}{1-p}\delta+\frac{p}{1-p}x}{1-\frac{p}{1-p}x}.
\end{equation}
Since $-\frac{1}{10}<-\delta\mik x<\frac{\delta}{2}<\frac{1}{20}$ and the function
$(-1,1)\ni x\mapsto \frac{\frac{p}{1-p}\delta+\frac{p}{1-p}x}{1-\frac{p}{1-p}x}$ is increasing,
by \eqref{e4.21}, we see that \eqref{e4.19} is satisfied.

Next assume that $z<\frac{\delta}{2}$. By \eqref{e4.8}, we have
\begin{equation} \label{e4.22}
1+w \mik \frac{1+\frac{p}{1-p}\delta}{1+z'} = 1+\frac{\frac{p}{1-p}\delta-z'}{1+z'}=
1+\frac{\frac{p}{1-p}\delta+\frac{p'}{1-p'}z}{1-\frac{p'}{1-p'}z}.
\end{equation}
Thus, \eqref{e4.20} follows from \eqref{e4.22} using the fact that
$-\frac{1}{10}<-\delta\mik z< \frac{\delta}{2}<\frac{1}{20}$ and the fact that the function
$(-1,1)\ni z\mapsto \frac{\frac{p}{1-p}\delta+\frac{p'}{1-p'}z}{1-\frac{p'}{1-p'}z}$ is increasing.
\end{proof}

We proceed by considering the following cases.

\subsubsection*{Case 1: $x\meg 0$ and $z\meg 0$.} First assume that $x<\frac{\delta}{2}$.
Then, since $p\mik \frac{1}{2}$, by \eqref{e4.19} we see that $\frac{xy}{1-p}< 2\frac{p}{1-p}\delta^2$.
Moreover, since $x,z\meg 0$, we have $\frac{1-2p}{1-p}x+\frac{1-2p'}{1-p'}z(1+x)\meg 0$ and so, by~\eqref{e4.15},
we obtain that
\begin{equation} \label{e4.23}
(1+x)(1+y')> 1-\frac{p}{1-p}\delta-2\frac{p}{1-p}\delta^2\meg 1-\delta-2\frac{p}{1-p}\delta^2,
\end{equation}
that is, \eqref{e4.9} is satisfied.

On the other hand, if $z<\frac{\delta}{2}$,
then we consider the subcases ``$p\mik \frac37$" and ``$\frac37<p$"\!.
First observe that, by \eqref{e4.20} and the fact that $\delta<\frac{1}{10}$, we have that $w\mik 2\delta$.
If $0<p\mik \frac{3}{7}$ then, since $w\meg 0$, this yields that
$\frac{zw}{1-p'}<2\delta^2<\frac{1-2p}{1-p}\delta$. Using again our starting assumption that
$x,z\meg 0$ we see that $\frac{1-2p}{1-p}x(1+z)+\frac{1-2p'}{1-p'}z\meg 0$. By \eqref{e4.16} and the previous
observations, we obtain~that
\begin{equation} \label{e4.24}
(1+z)(1+w')\meg 1-\frac{p}{1-p}\delta-\frac{1-2p}{1-p}\delta=
1-\delta>1-\delta-2\frac{p}{1-p}\delta^2;
\end{equation}
in other words, in this subcase, \eqref{e4.10} is satisfied. Finally, assume that $\frac37<p\mik \frac12$.
Then observe that $\frac{p'}{2(1-p')}<\frac{2}{3}\frac{p}{1-p}$, which in turn implies,
by \eqref{e4.20} and the fact that $\delta<\frac{1}{10}$, that $w< 2\frac{p}{1-p}\delta$.
Therefore, $zw<\frac{p}{1-p}\delta^2<2\frac{p}{1-p}\delta^2$. By the previous discussion,
\eqref{e4.16} and using once again the estimate $\frac{1-2p}{1-p}x(1+z)+\frac{1-2p'}{1-p'}z\meg 0$, we conclude that
\begin{equation} \label{e4.25}
(1+z)(1+w') > 1-\delta-2\frac{p}{1-p}\delta^2
\end{equation}
and so, in this subcase, \eqref{e4.10} is satisfied.

\subsubsection*{Case 2: $x<0$ and $z\meg 0$.} Then we have
$-\frac{xy}{1-p}\meg 0$ and $\frac{1-2p}{1-p}x+\frac{1-2p'}{1-p'}z(1+x)\meg \frac{1-2p}{1-p}x$
and so, by \eqref{e4.15} and \eqref{e4.18}, we obtain that
\begin{equation} \label{e4.26}
(1+x)(1+y')\meg 1-\frac{p}{1-p}\delta-\frac{1-2p}{1-p}\delta =
1-\delta>1-\delta-2\frac{p}{1-p}\delta^2.
\end{equation}
Thus, in this case, \eqref{e4.9} is satisfied.

\subsubsection*{Case 3: $x\meg 0$ and $z<0$.} It is similar to Case 2.
Indeed, observe that $-\frac{zw}{1-p}\meg 0$ and
$\frac{1-2p}{1-p}x(1+z)+\frac{1-2p'}{1-p'}z\meg \frac{1-2p'}{1-p'}z$.
Hence, by \eqref{e4.16}, \eqref{e4.18} and the fact that $\frac{1-2p}{1-p}\meg\frac{1-2p'}{1-p'}$,
we~obtain that
\begin{equation} \label{e4.27}
(1+z)(1+w')\meg 1-\frac{p}{1-p}\delta-\frac{1-2p'}{1-p'}\delta >1-\delta-2\frac{p}{1-p}\delta^2;
\end{equation}
thus, in this case, \eqref{e4.10} is satisfied.

\subsubsection*{Case 4: $x<0$ and $z<0$.} First observe that $-\frac{xy}{1-p}>0$.
Moreover, since $z'\mik y$, by~\eqref{e4.7},
\begin{equation} \label{e4.28}
1+x'+z'+x'z'=(1+x')(1+z') \mik (1+x')(1+y) \mik 1+\frac{p}{1-p}\delta
\end{equation}
that implies that $x'+z'+x'z'\mik \frac{p}{1-p}\delta$. Noticing that $\frac{1-2p'}{p'}\mik \frac{1-2p}{p}$,
by \eqref{e4.4} and \eqref{e4.5},
\begin{align}
\label{e4.29} \frac{1-2p}{1-p}x+\frac{1-2p'}{1-p'}z(1+x) & =
-\frac{1-2p}{p}x'-\frac{1-2p'}{p'}z'+\frac{1-2p'}{p'}\frac{1-p}{p}x'z' \\
& \meg -\frac{1-2p}{p}x'-\frac{1-2p}{p}z'-\frac{1-2p}{p}x'z' \nonumber \\
& \meg -\frac{1-2p}{p}(x'+z'+x'z')\meg  -\frac{1-2p}{1-p}\delta. \nonumber
\end{align}
Hence, by \eqref{e4.15} and \eqref{e4.29}, we conclude that
\begin{equation} \label{e4.30}
(1+x)(1+y')\meg 1-\frac{p}{1-p}\delta-\frac{1-2p}{1-p}\delta=
1-\delta>1-\delta-2\frac{p}{1-p}\delta^2;
\end{equation}
thus, in this case, \eqref{e4.9} is satisfied.
\medskip

The above cases are exhaustive, and so, the proof of Sublemma \ref{s4.2} is completed.


\section{The algorithm} \label{sec5}

\numberwithin{equation}{section}

In this section we present the formal description and the basic properties of
the algorithm that is used in the proof of Theorem~\ref{main}; we shall also discuss in more
loose terms its main features. (Again, we recall that for every family
$\mathcal{A}\subseteq \{0,1\}^n$ ($n\meg 2$) by $\mathcal{A}_0,\mathcal{A}_1\subseteq \{0,1\}^{n-1}$
we denote the sections of $\Acal$ defined in \eqref{e2.1}.)

For the analysis of the algorithm, we will need the following elementary, but crucial,
fact that originates in the work of Frankl and R\"{o}dl \cite{FR87}.
\begin{fact} \label{f5.1}
Let $n\meg 2$ be an integer, let $\Fcal,\Gcal\subseteq \{0,1\}^n$ and let $a,b\in [n]$ with $a\mik b$.
Assume that $(\Fcal,\Gcal)\in\mathrm{Forbid}(n,[a,b])$. Then we have
\begin{align}
\label{e5.1}  (\Fcal_1,\Gcal_1) \in  \mathrm{Forbid}&(n-1,[a-1,b-1]), \\
\label{e5.2}  (\Fcal_0,\Gcal_0\cup\Gcal_1)\in \mathrm{Forbid}(n-1,[a,b]), & \ \
(\Fcal_0\cup\Fcal_1,\Gcal_0)\in \mathrm{Forbid}(n-1,[a,b]), \\
\label{e5.3} (\Fcal_1,\Gcal_0\cap\Gcal_1)\in \mathrm{Forbid}(n-1,[a-1,b]), & \ \
(\Fcal_0\cap\Fcal_1,\Gcal_1)\in \mathrm{Forbid}(n-1,[a-1,b]).
\end{align}
\end{fact}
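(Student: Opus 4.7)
The plan is to verify each of the three assertions by picking arbitrary elements from the relevant subfamilies of $\{0,1\}^{n-1}$, lifting them back to $\{0,1\}^{n}$ in the obvious way, and reading off what the hypothesis $(\Fcal,\Gcal)\in\mathrm{Forbid}(n,[a,b])$ says about their intersection sizes. The bookkeeping is essentially: adding $n$ to a set shifts the intersection with another set containing $n$ by $+1$, and has no effect on the intersection with a set not containing $n$.

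For \eqref{e5.1}, I would pick $A'\in\Fcal_1$ and $B'\in\Gcal_1$; by definition $A\coloneqq A'\cup\{n\}\in\Fcal$ and $B\coloneqq B'\cup\{n\}\in\Gcal$, so $|A'\cap B'|=|A\cap B|-1\notin[a-1,b-1]$. For the two inclusions in \eqref{e5.2}, I would focus on the first (the second is symmetric). Take $A\in\Fcal_0$ and $B'\in\Gcal_0\cup\Gcal_1$. Regardless of whether $B'$ comes from $\Gcal_0$ (in which case $B'\in\Gcal$) or from $\Gcal_1$ (in which case $B'\cup\{n\}\in\Gcal$), the crucial observation is that $n\notin A$, so the set $\{n\}$ contributes nothing to the intersection; thus $|A\cap B'|$ equals $|A\cap B|$ for some $B\in\Gcal$, and hence lies outside $[a,b]$.

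The slightly more interesting step is \eqref{e5.3}, where the ``widened'' forbidden interval $[a-1,b]$ appears. For the first inclusion, take $A'\in\Fcal_1$ and $B'\in\Gcal_0\cap\Gcal_1$. Then $A\coloneqq A'\cup\{n\}\in\Fcal$, and the key point is that membership of $B'$ in \emph{both} sections gives two distinct lifts: $B'\in\Gcal$ (from $\Gcal_0$) and $B'\cup\{n\}\in\Gcal$ (from $\Gcal_1$). Intersecting $A$ with these produces $|A'\cap B'|$ and $|A'\cap B'|+1$ respectively, and both must lie outside $[a,b]$; equivalently, $|A'\cap B'|\notin[a,b]\cup[a-1,b-1]=[a-1,b]$. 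The second inclusion is identical after swapping the roles of $\Fcal$ and $\Gcal$.

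I do not foresee any real obstacle here; the statement is a purely combinatorial observation and the whole argument amounts to tracking how the element $n$ interacts with the two ways of restricting a family to $\{0,1\}^{n-1}$. The only thing that requires a tiny bit of care is \eqref{e5.3}, where one must remember to exploit \emph{both} lifts of an element of $\Gcal_0\cap\Gcal_1$ (resp.\ $\Fcal_0\cap\Fcal_1$) in order to obtain the stronger forbidden interval of length $b-a+2$.
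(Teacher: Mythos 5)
Your proof is correct and is exactly the elementary case-by-case verification that the paper implicitly delegates to the reader (the paper states Fact~\ref{f5.1} without proof, attributing it to Frankl--R\"{o}dl). In particular, your treatment of \eqref{e5.3}---using \emph{both} lifts $B'$ and $B'\cup\{n\}$ of an element of $\Gcal_0\cap\Gcal_1$ to rule out both $[a,b]$ and $[a-1,b-1]$---is the right key observation, and the rest is routine bookkeeping as you say.
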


\begin{algorithm}
\hrule
\vspace{0.11cm}

\textbf{Algorithm.}

\vspace{0.1cm}
\hrule
\BlankLine

\textbf{Input:} $n,p,p',\ell,0<\delta<\frac{1}{10},
(\Fcal_{\mathrm{init}}, \Gcal_{\mathrm{init}})\in\mathrm{Forbid}(n,\{\ell\})$
with $\Fcal_{\mathrm{init}}, \Gcal_{\mathrm{init}}\neq\emptyset$ \\
\textbf{Output:} $a^*,b^*,m^*, \Fcal^*, \Gcal^*$
\BlankLine

\textbf{Initialize:} $S_{d_1}\leftarrow 0$, $S_{d_2}\leftarrow 0$, $S_{w}\leftarrow 0$,
$a\leftarrow \ell$, $b\leftarrow \ell$, \\
\hspace{1.71cm} $m\leftarrow n$, $\Fcal\leftarrow\Fcal_{\mathrm{init}}$, $\Gcal\leftarrow\Gcal_{\mathrm{init}}$.
\BlankLine

\textbf{Step 1:} If $a=0$, then $a^*\leftarrow a$, $b^*\leftarrow b$, $m^*\leftarrow m$,
$\Fcal^*\leftarrow\Fcal$, $\Gcal^*\leftarrow\Gcal$ and terminate; \\
\hspace{1.3cm} else go to \textbf{Step 2}.\\
\textbf{Step 2:} If $b=m$, then $a^*\leftarrow a$, $b^*\leftarrow b$, $m^*\leftarrow m$,
$\Fcal^*\!\leftarrow\Fcal$, $\Gcal^*\leftarrow\Gcal$ and terminate; \\
\hspace{1.3cm} else go to \textbf{Step 3}.\\
\textbf{Step 3:} If $\mu_p(\Fcal_1)\, \mu_{p'}(\Gcal_1)> (1+\delta)\,\mu_p(\Fcal)\, \mu_{p'}(\Gcal)$,
then $S_{d_1}\leftarrow S_{d_1}+1$, \\
\hspace{1.6cm} $a\leftarrow a-1$, $b\leftarrow b-1$, $m\leftarrow m-1$,
$\Fcal\leftarrow\Fcal_1$, $\Gcal\leftarrow\Gcal_1$ and go to \textbf{Step 1}; \\
\hspace{1.3cm} else go to \textbf{Step 4}.  \\
\textbf{Step 4:} If $\mu_p(\Fcal_0)\, \mu_{p'}(\Gcal_0\cup\Gcal_1)>(1+\frac{p}{1-p}\delta)\,
\mu_p(\Fcal)\, \mu_{p'}(\Gcal)$, then $S_{d_2}\leftarrow S_{d_2}+1$, \\
\hspace{1.6cm} $m\leftarrow m-1$, $\Fcal\leftarrow\Fcal_0$, $\Gcal\leftarrow\Gcal_0\cup\Gcal_1$
and go to \textbf{Step 1}; \\
\hspace{1.3cm} else go to \textbf{Step 5}. \\
\textbf{Step 5:} If $\mu_{p}(\Fcal_0\cup\Fcal_1)\, \mu_{p'}(\Gcal_0)>(1+\frac{p}{1-p}\delta)\,
\mu_p(\Fcal)\, \mu_{p'}(\Gcal)$, then $S_{d_2}\leftarrow S_{d_2}+1$, \\
\hspace{1.6cm} $m\leftarrow m-1$, $\Fcal\leftarrow\Fcal_0\cup \Fcal_1$, $\Gcal\leftarrow\Gcal_0$
and go to \textbf{Step 1}; \\
\hspace{1.3cm} else go to \textbf{Step 6}. \\
\textbf{Step 6:} If $\mu_p(\Fcal_1)\, \mu_{p'}(\Gcal_0\cap\Gcal_1)> (1-\delta-2\frac{p}{1-p}\delta^2)\,
\mu_p(\Fcal)\,\mu_{p'}(\Gcal)$, \\
\hspace{1.6cm} then $S_{w}\leftarrow S_{w}+1$, $a\leftarrow a-1$, $m\leftarrow m-1$, $\Fcal\leftarrow\Fcal_1$,
$\Gcal\leftarrow\Gcal_0\cap\Gcal_1$ \\
\hspace{1.6cm} and go to \textbf{Step 1}; \\
\hspace{1.3cm} else go to \textbf{Step 7}. \\
\textbf{Step 7:} $S_{w}\leftarrow S_{w}+1$, $a\leftarrow a-1$, $m\leftarrow m-1$,
$\Fcal\leftarrow\Fcal_0\cap\Fcal_1$, $\Gcal\leftarrow\Gcal_1$ \\
\hspace{1.6cm} and go to \textbf{Step 1}.
\vspace{0.1cm}
\hrule
\end{algorithm}

The algorithm takes as an input
\begin{enumerate}
\item[(I1)] an integer $n\meg 2$, two reals $0<p\mik p'\mik \frac12$ and a positive integer $\ell< pn$,
\item[(I2)] a real $0<\delta<\frac{1}{10}$, and
\item[(I3)] two nonempty families $\Fcal_{\mathrm{init}}, \Gcal_{\mathrm{init}}\subseteq \{0,1\}^n$ whose
cross intersections forbid~$\ell$,
\end{enumerate}
and outputs
\begin{enumerate}
\item[(O1)] three nonnegative integers $a^*\mik b^*\mik m^*$ with $m^*\meg 1$, and
\item[(O2)] two families $\Fcal^*, \Gcal^*\subseteq \{0,1\}^{m^*}$ such that
\begin{equation} \label{e5.4}
(\Fcal^*,\Gcal^*)\in\Forb{m^*}{[0,b^*]} \ \  \text{ or } \ \
(\Fcal^*,\Gcal^*)\in \Forb{m^*}{[a^*,m^*]}.
\end{equation}
\end{enumerate}
It also uses six counters $S_{d_1},S_{d_2},S_w,a,b,m$ that serve different purposes.
The first three counters, $S_{d_1},S_{d_2}$ and $S_w$, give us the total number of iterations
and they are used for bookkeeping the operations performed by the algorithm (we shall comment
on these operations in due course). The counters $a,b$ encode the interval that is forbidden
for the families $\Fcal$ and $\Gcal$. Finally, the counter $m$ keeps track of the dimension of
$\Fcal$ and $\Gcal$; in particular, $m$ starts from $n$ and drops by one at each iteration. Thus, we have
\begin{equation}\label{e5.5}
S_{d_1}+S_{d_2}+S_w=n-m^*.
\end{equation}
Moreover, by Fact \ref{f5.1}, at each iteration of ``type" $S_{d_1}$ or $S_{w}$
(that is, at each iteration where one of the counters $S_{d_1}$ or $S_{w}$ is increased by one)
the lower bound $a$ of the forbidden interval $[a,b]$ is reduced by one; since the algorithm starts
with $a=\ell$, we obtain that
\begin{equation} \label{e5.6}
S_{d_1}+S_w\mik \ell.
\end{equation}

Next observe that $S_{d_1}$ and $S_{d_2}$ count the number of iterations where we have ``density increment"\!.
Note, however, that this increment is not uniform: at each iteration of ``type" $S_{d_1}$
the product of the measures is increased by a factor $(1+\delta)$, while at each iteration of ``type" $S_{d_2}$
the product of the measures is increased by a factor $(1+\frac{p}{1-p}\delta)$. On the other hand, if at a certain
iteration the algorithm reaches \textbf{Step 6} and then moves to \textbf{Step 1}, then the product of the measures
of the new families is comparable to the product of the measures of the previous families by a factor
$(1-\delta-2\frac{p}{1-p}\delta^2)$; in other words, the product of the measures may possibly drop,
but not significantly. Finally,  if at a certain iteration the algorithm reaches \textbf{Step 7},
then the widening lemma (Lemma \ref{l4.1}) ensures that the product of the measures
of the new families is also at least $(1-\delta-2\frac{p}{1-p}\delta^2)$ times
the product of the measures of the previous families.

Summing up the previous observations, we arrive at the following basic estimate
\begin{equation} \label{e5.7}
\mu_p(\Fcal^*)\,\mu_{p'}(\Gcal^*) > (1+\delta)^{S_{d_1}} \Big(1+\frac{p}{1-p}\delta\Big)^{S_{d_2}}\,
\Big(1-\delta-2\frac{p}{1-p}\delta^2\Big)^{S_w}\, \mu_p(\Fcal_{\mathrm{init}})\, \mu_{p'}(\Gcal_{\mathrm{init}})
\end{equation}
that will be used in the analysis of the algorithm in the next section.


\section{Proof of Theorem \ref*{main}} \label{sec6}

\numberwithin{equation}{section}

Let $n,p,p',\ell,\Fcal,\Gcal$ be as in the statement of the theorem. Clearly, we may assume that
$\Fcal$ and $\Gcal$ are nonempty. Notice that \eqref{e1.main} is straightforward if $\ell=0$ or $\ell=pn$.
Thus, we may also assume that $\ell$ is a positive integer with $\ell<pn$ and, consequently, $n\meg 2$;
moreover, setting
\begin{equation} \label{e6.1}
\delta\coloneqq \min\Big\{ \frac{\ell}{58pn}, \frac{pn-\ell}{51 pn}\Big\},
\end{equation}
we have that $0<\delta<\frac{1}{10}$. We will actually show the slightly stronger estimate
\begin{equation} \label{e6.2}
\mu_p(\Fcal)\, \mu_{p'}(\Gcal)\mik 2 \exp(-pn \delta^2).
\end{equation}
Assume, towards a contradiction, that this is not the case, that is,
\begin{equation} \label{e6.3}
\mu_p(\Fcal)\,\mu_{p'}(\Gcal)>2\exp(-pn\delta^2).
\end{equation}
We run the algorithm described in Section \ref{sec5} for $n,p,p',\ell,\delta$ and the families
$\Fcal,\Gcal$. Let $a^*,b^*,m^*,\Fcal^*,\Gcal^*$ denote the output of the algorithm.
By \eqref{e5.4}, we see that either
\begin{enumerate}
\item[($\mathcal{A}1$)] $(\Fcal^*,\Gcal^*)\in\mathrm{Forbid}(m^*,[0,b^*])$, or
\item[($\mathcal{A}2$)] $(\Fcal^*,\Gcal^*)\in\mathrm{Forbid}(m^*,[a^*,m^*])$.
\end{enumerate}
The contradiction will be derived by showing that none of these cases can occur.

To this end we first observe that, by \eqref{e5.7} and \eqref{e6.3}, we have
\begin{equation} \label{e6.4}
1 >(1+\delta)^{S_{d_1}}\, \Big(1+\frac{p}{1-p}\delta\Big)^{S_{d_2}}\,
\Big(1-\delta-2\frac{p}{1-p}\delta^2\Big)^{S_w}\, \exp(-pn\delta^2),
\end{equation}
where $S_{d_1},S_{d_2},S_w$ are the counters used in the algorithm.
We will need the following estimates for $S_{d_1},S_{d_2}$ and $S_w$.
\begin{lem} \label{l6.1}
We have
\begin{align}
\label{e6.5} & \ \ \ \, S_{d_1}-S_w < 5pn\delta, \\
\label{e6.6} & S_{d_2} -\frac{1-p}{p}S_w  < 3n \delta.
\end{align}
\end{lem}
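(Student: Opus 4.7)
The plan is to take logarithms of \eqref{e6.4} and apply elementary Taylor estimates. Setting $\eta\coloneqq \delta+2\tfrac{p}{1-p}\delta^2$, we have $\eta\mik 1.2\delta<\tfrac{1}{2}$ since $\delta<\tfrac{1}{10}$ and $\tfrac{p}{1-p}\mik 1$, so the standard estimates $\log(1+x)\meg x-x^2/2$ for $x\meg 0$ and $\log(1-y)\meg -y-y^2$ for $y\in[0,\tfrac{1}{2}]$ apply to all three logarithms in (the log of) \eqref{e6.4}. After expansion, the resulting inequality has a leading linear-in-$\delta$ piece $\delta(S_{d_1}+\tfrac{p}{1-p}S_{d_2}-S_w)$ together with a collection of quadratic-in-$\delta$ error terms.

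To derive \eqref{e6.5}, I would discard the nonnegative middle term $S_{d_2}\log(1+\tfrac{p}{1-p}\delta)$ and Taylor-expand only the first and third terms. After dividing by $\delta$ and using $S_{d_1},S_w\mik \ell<pn$ (from \eqref{e5.6}) together with $\tfrac{p}{1-p}\mik 1$, every remaining quadratic error term is bounded by a constant multiple of $pn\delta$; careful tallying yields the constant $5$ claimed in \eqref{e6.5}. To derive \eqref{e6.6}, I would instead discard the first term $S_{d_1}\log(1+\delta)$, Taylor-expand the remaining two, and rearrange to obtain
\[ \delta\Big(\tfrac{p}{1-p}S_{d_2}-S_w\Big) < pn\delta^2 + (\text{quadratic errors}). \]
Multiplying by $\tfrac{1-p}{p}$ converts the leading term into $S_{d_2}-\tfrac{1-p}{p}S_w$ and the main right-hand side into $(1-p)n\delta\mik n\delta$; controlling the transformed quadratic errors using $S_{d_2}\mik n$, $S_w\mik pn$, and $\tfrac{p}{1-p}\mik 2p$ (valid since $p\mik \tfrac{1}{2}$) produces the constant $3$ claimed in \eqref{e6.6}.

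The only nontrivial part of the argument is the bookkeeping required to arrive precisely at the numerical constants $5$ and $3$; no conceptual obstacle is expected beyond that, since both estimates come from the same log-expanded inequality, and each is obtained simply by dropping the single nonnegative logarithmic term that is not needed in its proof.
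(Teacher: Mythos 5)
Your proposal is correct and follows essentially the same route as the paper: take logarithms of \eqref{e6.4}, discard the nonnegative $S_{d_2}$-term for \eqref{e6.5} and the nonnegative $S_{d_1}$-term for \eqref{e6.6}, apply elementary Taylor bounds on the logarithms, and use $S_{d_1},S_w\mik\ell<pn$ (from \eqref{e5.6}) and $S_{d_2}\mik n$ (from \eqref{e5.5}) to tidy the quadratic error terms. The only cosmetic difference is that the paper first regroups two of the multiplicative factors---e.g.\ writing $(1+\delta)^{S_{d_1}-S_w}\big((1+\delta)(1-\delta-2\tfrac{p}{1-p}\delta^2)\big)^{S_w}$ so that the grouped factor is $1-O(\delta^2)$---before taking logarithms, whereas you expand each logarithm separately; both lead to the same leading linear-in-$\delta$ piece and the same order of quadratic corrections, and a straightforward tally does yield constants below $5$ and $3$ respectively (in fact roughly $4.9$ and $2.7$ at the extreme $p=\tfrac12$, $\delta=\tfrac1{10}$).
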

In the proof of Lemma \ref{l6.1}, as well as in the rest of this section, we will repeatedly use
the following elementary observation, which we isolate for the convenience of the reader.
\begin{observation} \label{o6.2}
The following hold.
\begin{enumerate}
\item[(i)] \label{i} We have $\frac{1}{1-x}=1+\frac{x}{1-x}$ for every $x\neq 1$.
\item[(ii)] \label{ii} We have $x-\frac{x^2}{2}\mik\ln(1+x)\mik x$ for every $x\meg 0$.
\end{enumerate}
In particular, for every $0\mik x <1$ we have
\begin{equation} \label{en.6.1}
\frac{x}{1-x} -\frac{x^2}{2(1-x)^2} \mik \ln\Big(\frac{1}{1-x}\Big) \mik \frac{x}{1-x}.
\end{equation}
\end{observation}
We are ready to proceed to the proof of Lemma \ref{l6.1}.
\begin{proof}[Proof of Lemma \emph{\ref{l6.1}}]
We start with the proof of \eqref{e6.5}. Notice first that, by \eqref{e6.4},
\begin{equation} \label{e6.7}
1 > (1+\delta)^{S_{d_1}} \, \Big(1-\delta-2\frac{p}{1-p}\delta^2\Big)^{S_w}\exp(-pn\delta^2)
\end{equation}
or, equivalently,
\begin{equation} \label{e6.8}
1 >(1+\delta)^{S_{d_1}-S_w}
\Big((1+\delta) \big(1-\delta-2\frac{p}{1-p}\delta^2\big)\Big)^{S_w} \exp(-pn\delta^2).
\end{equation}
Since $(1+\delta)(1-\delta-2\frac{p}{1-p}\delta^2)=1-\frac{1+p}{1-p}\delta^2-2\frac{p}{1-p}\delta^3$,
after taking logarithms and rearranging we find that
\begin{equation} \label{e6.9}
S_{d_1}-S_w < \frac{1}{\ln(1+\delta)}
\left(S_w\, \ln\Big(\frac{1}{1-\frac{1+p}{1-p}\delta^2-2\frac{p}{1-p}\delta^3}\Big)+pn\delta^2\right)
\end{equation}
that implies, by Observation \ref{o6.2}, that
\begin{equation} \label{e6.10}
S_{d_1}-S_w < \frac{1}{\delta(1-\frac{\delta}{2})} \left( S_w \, \frac{\frac{1+p}{1-p}\delta^2+2\frac{p}{1-p}\delta^3}{1-\frac{1+p}{1-p}\delta^2-2\frac{p}{1-p}\delta^3}
+pn\delta^2\right).
\end{equation}
By \eqref{e5.6}, the fact that $\ell< pn$ and \eqref{e6.10}, we conclude that
\begin{equation} \label{e6.11}
S_{d_1}-S_w<pn\delta \left(\frac{\frac{1+p}{1-p}+2\frac{p}{1-p}\delta}{\left(1-\frac{\delta}{2}\right)
\left(1-\frac{1+p}{1-p}\delta^2-2\frac{p}{1-p}\delta^3\right)}+\frac{1}{\left(1-\frac{\delta}{2}\right)}\right).
\end{equation}
The desired estimate \eqref{e6.5} follows from \eqref{e6.11} and the fact that $0<\delta<\frac{1}{10}$ and $p\mik \frac12$.

We proceed to show that inequality \eqref{e6.6} is also satisfied.
As before, we first observe that \eqref{e6.4} yields that
\begin{equation} \label{e6.12}
1 > \Big(1+\frac{p}{1-p}\delta\Big)^{S_{d_2}-S_w}
\left(\big(1+\frac{p}{1-p}\delta\big)\big(1-\delta-2\frac{p}{1-p}\delta^2\big)\right)^{S_w}\, \exp(-pn\delta^2).
\end{equation}
On the other hand, since $0<\delta<\frac{1}{10}$, we have
\begin{equation} \label{e6.13}
\Big(1+\frac{p}{1-p}\delta\Big)\, \Big(1-\delta-2\frac{p}{1-p}\delta^2\Big)\meg
1-\frac{1-2p}{1-p}\delta-\frac{16}{5}\frac{p}{1-p}\delta^2
\end{equation}
that combined with \eqref{e6.12} yields that
\begin{equation} \label{e6.14}
1 > \Big(1+\frac{p}{1-p}\delta\Big)^{S_{d_2}-S_w}
\Big(1-\frac{1-2p}{1-p}\delta-\frac{16}{5}\frac{p}{1-p}\delta^2\Big)^{S_w}\, \exp(-pn\delta^2).
\end{equation}
Now after taking logarithms and rearranging, we have
\begin{equation} \label{e6.15}
S_{d_2}-S_w < \frac{1}{\ln\left(1+\frac{p}{1-p}\delta\right)} \,
\left(S_w\, \ln\left(\frac{1}{1-\frac{1-2p}{1-p}\delta-\frac{16}{5}\frac{p}{1-p}\delta^2}\right)+pn\delta^2\right);
\end{equation}
by Observation \ref{o6.2}, this yields that
\begin{equation} \label{e6.16}
S_{d_2}-S_w < \frac{1-p}{p\delta\left(1-\frac{p}{2(1-p)}\delta\right)} \,
\left(S_w\, \frac{\frac{1-2p}{1-p}\delta+\frac{16}{5}
\frac{p}{1-p}\delta^2}{1-\frac{1-2p}{1-p}\delta-\frac{16}{5}\frac{p}{1-p}\delta^2} + pn\delta^2\right)
\end{equation}
that can be further simplified to
\begin{equation} \label{e6.17}
S_{d_2}-S_w <  S_w\, \frac{\frac{1-2p}{p}+\frac{16}{5}\delta}{\left(1-\frac{p}{2(1-p)}\delta\right)
\left(1-\frac{1-2p}{1-p}\delta-\frac{16}{5}\frac{p}{1-p}\delta^2\right)}+
\frac{(1-p)n\delta}{1-\frac{p}{2(1-p)}\delta}.
\end{equation}
On the other hand, since $p\mik \frac12$, for every $0<\delta<\frac{1}{10}$ we have
\begin{equation} \label{e6.18}
\left(1-\frac{p}{2(1-p)}\delta\right)
\left(1-\frac{1-2p}{1-p}\delta-\frac{16}{5}\frac{p}{1-p}\delta^2\right)\meg 1-\delta;
\end{equation}
indeed, after noticing that
\begin{equation} \label{en.6.2}
1-\frac{1-2p}{1-p}\delta-\frac{16}{5}\frac{p}{1-p}\delta^2 =
1-\delta+ \frac{p}{1-p}\delta -\frac{16}{5}\, \frac{p}{1-p}\delta^2,
\end{equation}
the desired estimate \eqref{e6.18} follows from the elementary inequality
\begin{equation} \label{en.6.3}
\frac{p}{1-p}\delta + \frac{16}{5}\, \frac{p^2}{2(1-p)^2}\delta^3
- \frac{p}{2(1-p)}\delta(1-\delta) - \frac{p^2}{2(1-p)^2}\delta^2
- \frac{16}{5}\, \frac{p}{1-p}\delta^2 \meg 0.
\end{equation}
By \eqref{e6.17} and \eqref{e6.18}, we obtain that
\begin{equation} \label{e6.19}
S_{d_2}-S_w < S_w\, \left(\frac{1-2p}{p}+\frac{16}{5}\delta\right)
\left(1+\frac{10}{9}\delta\right)+(1-p)n\delta\left(1+\delta\right).
\end{equation}
We then expand \eqref{e6.19} to
\begin{equation} \label{e6.20}
S_{d_2}-S_w < S_w\, \left(\frac{1-2p}{p}+ \frac{16}{5}\delta+ \frac{1-2p}{p}\frac{10}{9}\delta +\frac{10}{9}\frac{16}{5}\delta^2\right)+(1-p)n\delta\left(1+\delta\right).
\end{equation}
By \eqref{e5.6}, we see that $S_w\mik \ell< pn$, and so \eqref{e6.20} yields that
\begin{equation} \label{e6.21}
S_{d_2}-\frac{1-p}{p}S_w < n\delta \left(p\frac{16}{5}+(1-2p)\frac{10}{9}+p\frac{10}{9}\frac{16}{5}\delta+(1-p)\left(1+\delta\right)\right).
\end{equation}
Inequality \eqref{e6.6} follows from \eqref{e6.21} and the fact that
$0<\delta<\frac{1}{10}$ and $0<p\mik\frac{1}{2}$.
\end{proof}
After these preliminary steps, we are ready to consider cases.

\subsection*{Case 1: $(\Fcal^*,\Gcal^*)\in\mathrm{Forbid}(m^*,[0,b^*])$}

Note that, in this case, the cardinality of the final forbidden interval $[0,b^*]$ is $b^*+1$;
on the other hand, the forbidden interval for the initial families $\Fcal,\Gcal$ was a singleton.
By Fact \ref{f5.1}, the cardinality of the forbidden interval increases by $1$
if and only if the algorithm executes an iteration of ``type'' $S_w$. Thus,
\begin{equation} \label{e6.22}
b^*=S_w.
\end{equation}
Next observe that the initial value of the lower bound $a$ of the forbidden interval is equal to $\ell$,
and it is equal to $0$ when the algorithm terminates. Using Fact \ref{f5.1} again, we see that
the counter $a$ decreases by $1$ if and only if an iteration of ``type" $S_{d_1}$ or an iteration
of ``type" $S_w$ is executed. Therefore, we also have that
\begin{equation} \label{e6.23}
S_{d_1}+S_w=\ell.
\end{equation}
By \eqref{e6.5} and \eqref{e6.23}, we obtain that
\begin{equation} \label{e6.24}
S_w\meg \frac{\ell}{2}-\frac52 pn\delta.
\end{equation}

On the other hand, since $\left(\Fcal^*,\Gcal^*\right)\in\mathrm{Forbid}(m^*,[0,b^*])$ and $b^*=S_w$,
by part (i) of Lemma \ref{l3.3}, we have
\begin{equation} \label{e6.25}
\mu_p(\Fcal^*)\,\mu_{p'}(\Gcal^*)\mik \exp\left(-\frac{S_w^2}{24pn}\right).
\end{equation}
Combining \eqref{e5.7}, \eqref{e6.3}, \eqref{e6.23} and \eqref{e6.25}, we obtain in particular that
\begin{equation} \label{e6.26}
\exp\left(-\frac{S_w^2}{24pn}\right) > (1+\delta)^{\ell-S_w} \left(1-\delta-2\frac{p}{1-p}\delta^2\right)^{S_w}\,
\exp\left(-pn\delta^2\right)
\end{equation}
that implies, after taking logarithms and using Observation \ref{o6.2}, that
\begin{equation} \label{e6.27}
(S_w-\ell)\, \Big(\delta-\frac{\delta^2}{2}\Big) +
S_w\, \left(\frac{\delta+2\frac{p}{1-p}\delta^2}{1-\delta-2\frac{p}{1-p}\delta^2}\right)+pn\delta^2
> \frac{S_w^2}{24pn}.
\end{equation}
Moreover, since $0<\delta<\frac{1}{10}$ and $0<p\mik \frac12$, we have
\begin{equation} \label{e6.28}
\delta+\frac{11}{3}\delta^2 \meg \frac{\delta+2\frac{p}{1-p}\delta^2}{1-\delta-2\frac{p}{1-p}\delta^2},
\end{equation}
which, combined with \eqref{e6.27}, implies that
\begin{equation} \label{e6.29}
24pn(2S_w-\ell)\delta - 12pn(2S_w-\ell)\delta^2 + 100 pnS_w\delta^2 +
24 p^2n^2\delta^2> S_w^2.
\end{equation}
We now consider the following subcases.

\subsubsection*{Subcase 1.1: $S_w\mik \frac23\ell$} In this subcase, by \eqref{e6.24} and the choice of
$\delta$ in \eqref{e6.1}, we have
\begin{equation} \label{e6.30}
-\frac{pn}{2}\mik 2S_w-\ell \mik \frac{S_w}{2} \ \ \ \text{ and } \ \ \
\delta \mik\frac{2}{53}\, \frac{S_w}{pn};
\end{equation}
indeed, by \eqref{e6.24} and \eqref{e6.1}, $S_w\meg \frac{\ell}{2}-\frac52 pn\delta\meg
\ell \big(\frac12- \frac{5}{116}\big) \meg 58pn\delta \big(\frac12- \frac{5}{116}\big)\meg
\frac{53}{2}pn\delta$. Hence, by \eqref{e6.29}, \eqref{e6.30} and the fact that $\delta<\frac{1}{10}$, we obtain that
\begin{equation} \label{e6.31}
S_w^2 < 12pnS_w\delta +30p^2n^2\delta^2 + 100pnS_w\delta^2
\mik \Big( \frac{24}{53}+ \frac{30\cdot 4}{53^2}+ \frac{200}{53}\cdot\frac{1}{10} \Big) S_w^2 < S_w^2,
\end{equation}
which is clearly a contradiction.

\subsubsection*{Subcase 1.2: $S_w>\frac23\ell$} By \eqref{e6.23} and \eqref{e6.1}, we have
\begin{equation} \label{e6.32}
0\mik \frac{S_w}{2} \mik 2S_w-\ell \mik S_w \ \ \ \text{ and } \ \ \
\delta \mik\frac{3}{116}\, \frac{S_w}{pn}.
\end{equation}
Therefore, by \eqref{e6.29} and \eqref{e6.32}, we get that
\begin{equation} \label{e6.33}
S_w^2 < 24pnS_w\delta +100pn S_w\delta^2 + 24 p^2n^2\delta^2
< \Big( \frac{24\cdot 3}{116}+ \frac{100\cdot 3}{116}\cdot \frac{1}{10}+ \frac{24\cdot 9}{116^2} \Big) S_w^2 < S_w^2,
\end{equation}
which leads, again, to a contradiction.

\subsection*{Case 2: $(\Fcal^*,\Gcal^*)\in\mathrm{Forbid}(m^*,[a^*,m^*])$}

The proof in this case is slightly more involved.
We start by observing that the initial value of the upper bound $b$ of the forbidden interval is equal to $\ell$,
and it is equal to $m^*$ when the algorithm terminates. Moreover, by Fact \ref{f5.1}, the counter $b$ decreases
by $1$ if and only if an iteration of ``type" $S_{d_1}$ is executed.~Thus,
\begin{equation} \label{e6.34}
m^*=\ell-S_{d_1}\mik \ell.
\end{equation}
Combining \eqref{e5.5} and \eqref{e6.34}, we obtain that
\begin{equation} \label{e6.35}
S_{d_2}+S_w=n-\ell.
\end{equation}
This identity together with \eqref{e6.6} yields that $S_w\meg pn-3pn\delta-p\ell$ that we rewrite as
\begin{equation} \label{e6.36}
S_w\meg (1-p)\ell+\left(\left(pn-\ell\right)-3pn\delta\right).
\end{equation}
Moreover, as we have already noted in the previous case, the cardinality of the forbidden interval increases by $1$
if and only if the algorithm executes an iteration of ``type'' $S_w$. Therefore, we also have that
\begin{equation} \label{e6.37}
m^*-a^*=S_w.
\end{equation}

Next, we introduce the quantity
\begin{equation} \label{e6.38}
\alpha\coloneqq S_w-(1-p)\ell,
\end{equation}
and we observe that, by \eqref{e6.36}, we have the lower bound
\begin{equation} \label{e6.39}
\alpha \meg \left(pn-\ell\right)-3pn\delta;
\end{equation}
notice that $\alpha>0$ by the choice of $\delta$ in \eqref{e6.1}.
Also notice that, by \eqref{e6.37} and \eqref{e6.38},
\begin{equation} \label{e6.40}
a^*+(\ell-m^*)=p\ell-\alpha.
\end{equation}
On the other hand, setting
\begin{align}
\label{e6.41} & \widehat{\Fcal}\coloneqq \{A\subseteq [\ell]:
A\cap [m^*]\in \Fcal^*\}\subseteq \{0,1\}^\ell, \\
\label{e6.42} & \widehat{\Gcal}\coloneqq \{B\subseteq [\ell]:
B\cap [m^*]\in \Gcal^*\}\subseteq \{0,1\}^\ell,
\end{align}
by \eqref{e6.40} and the fact that $(\Fcal^*,\Gcal^*)\in\mathrm{Forbid}(m^*,[a^*,m^*])$, we see that
\begin{enumerate}
\item[(i)] $\mu_p(\Fcal^*)=\mu_p(\widehat{\Fcal})$
and $\mu_{p'}(\Gcal^*)=\mu_{p'}(\widehat{\Gcal})$, and
\item[(ii)] $(\widehat{\Fcal},\widehat{\Gcal})\in \mathrm{Forbid}(\ell,[p\ell-\alpha,\ell])$.
\end{enumerate}
Hence, by part (ii) of Lemma \ref{l3.3}, we obtain that
\begin{equation} \label{e6.43}
\mu_p(\Fcal^*)\, \mu_{p'}(\Gcal^*)\mik 2\exp\left(-\frac{\alpha^2}{24p\ell}\right).
\end{equation}
In particular, by \eqref{e5.7}, \eqref{e6.3} and \eqref{e6.43}, we have that
\begin{equation} \label{e6.44}
\exp\left(-\frac{\alpha^2}{24p\ell}\right) > \left(1+\frac{p}{1-p}\delta\right)^{S_{d_2}}
\left(1-\delta-2\frac{p}{1-p}\delta^2\right)^{S_w}\, \exp(-pn\delta^2).
\end{equation}
As in the previous case, we will show that \eqref{e6.44} leads to a contradiction.

To this end it is enough to show that, by the choice of $\delta$ in \eqref{e6.1}, we have
\begin{equation} \label{e6.45}
\exp\left(-\frac{\alpha^2}{24p\ell}\right) \mik \left(1+\frac{p}{1-p}\delta\right)^{S_{d_2}}
\left(1-\delta-2\frac{p}{1-p}\delta^2\right)^{S_w}\, \exp(-pn\delta^2).
\end{equation}
After taking logarithms and using Observation \ref{o6.2}, it is enough to show that
\begin{equation} \label{e6.46}
pn\delta^2+S_w\, \frac{\delta+2\frac{p}{1-p}\delta^2}{1-\delta-2\frac{p}{1-p}\delta^2} \mik S_{d_2}\left(\frac{p}{1-p}\delta-\frac{p^2}{2(1-p)^2}\delta^2\right)+\frac{\alpha^2}{24p\ell}.
\end{equation}
Since $0<\delta<\frac{1}{10}$ and $p\mik \frac12$, we have
\begin{align}
\label{e6.47} & \ \ \ \ \ \ \, \frac{1}{1-\delta-2\frac{p}{1-p}\delta^2} < \left(1+\frac32\delta\right), \\
\label{e6.48} & \left(1+2\frac{p}{1-p}\delta\right) \left(1+\frac32\delta\right)\mik (1+4\delta).
\end{align}
Thus, by \eqref{e6.46}--\eqref{e6.48}, it is enough to show that
\begin{equation} \label{e6.49}
pn\delta^2+S_w\delta(1+4\delta)\mik S_{d_2}\, \left(\frac{p}{1-p}\delta-\frac{p^2}{2(1-p)^2}\delta^2\right)
+\frac{\alpha^2}{24p\ell},
\end{equation}
which is equivalent to saying, after rearranging, that
\begin{equation} \label{e6.50}
\left(pn+4S_w+\frac{p^2}{2(1-p)^2}\, S_{d_2}\right)\delta^2+\left(S_w-\frac{p}{1-p}\,S_{d_2}\right)\delta
\mik \frac{\alpha^2}{24p\ell}.
\end{equation}
By \eqref{e5.6}, we have $S_w\mik\ell< pn$ and, clearly, $S_{d_2}\mik n$.
Hence, by \eqref{e6.50} and the fact that $0<p\mik \frac12$, it is enough to show that
\begin{equation} \label{e6.51}
6pn\delta^2 +\left(S_w-\frac{p}{1-p}\,S_{d_2}\right)\delta \mik \frac{\alpha^2}{24p\ell}.
\end{equation}
Observe that
\begin{align}
\label{e6.52} S_w-\frac{p}{1-p}S_{d_2} & \stackrel{\eqref{e6.35}}{=}
S_w-\frac{p}{1-p}\left(n-\ell-S_w\right) =\frac{1}{1-p}(S_w-pn+p\ell) \\
& \stackrel{\eqref{e6.38}}{=} \frac{1}{1-p}\big(\alpha+(1-p)\ell-pn+p\ell\big) =
\frac{1}{1-p}\big(\alpha-(pn-\ell)\big). \nonumber
\end{align}
In order to verify \eqref{e6.51}, we consider the following subcases.

\subsubsection*{Subcase 2.1: $\alpha\mik pn-\ell$} By \eqref{e6.52},
we have $S_w-\frac{p}{1-p}S_{d_2}\mik 0$, and so it is enough to show that
\begin{equation} \label{e6.53}
144p^2n^2\delta^2 \mik \alpha^2.
\end{equation}
Since $0<\delta<\frac{pn-\ell}{51pn}$, by the choice of $\delta$ in \eqref{e6.1},
the estimate \eqref{e6.53}---and, consequently, \eqref{e6.51}---follows from \eqref{e6.39}.

\subsubsection*{Subcase 2.2: $\alpha> pn-\ell$} In this subcase, by \eqref{e6.52}, we have $S_w-\frac{p}{1-p}S_{d_2}>0$.
Hence, using again the fact that $0<\delta<\frac{pn-\ell}{51pn}$, it is enough to show that
\begin{equation} \label{e6.54}
\frac{144}{51^2}(pn-\ell)^2 + \frac{24}{51}\left(S_w-\frac{p}{1-p}S_{d_2}\right)(pn-\ell)\mik \alpha^2,
\end{equation}
which is equivalent to saying, by \eqref{e6.52}, that
\begin{equation} \label{e6.55}
\left(\frac{144}{51^2}-\frac{24}{51(1-p)}\right) (pn-\ell)^2 + \frac{24}{51(1-p)}\alpha (pn-\ell)\mik \alpha^2.
\end{equation}
Since $0<p\mik \frac12$, it is enough to show that
\begin{equation} \label{e6.56}
\frac{48}{51} (pn-\ell)\mik \alpha
\end{equation}
that follows from our starting assumption that $\alpha>pn-\ell$.
\medskip

Summing up, we conclude that \eqref{e6.51} is satisfied, and as we have already indicated, this
contradicts \eqref{e6.44}. This completes the proof that Case 2 cannot occur, and so the entire
proof of Theorem \ref{main} is completed.


\section{Extensions of the main estimate} \label{sec7}

\numberwithin{equation}{section}

We start with the following proposition, which is the analogue of Theorem \ref{main} for families
of sets contained in layers of the cube.
\begin{prop} \label{p7.1}
Let $\ell\mik k\mik m\mik n$ be positive integers, and let $\Fcal\subseteq\binom{[n]}{k}$
and\, $\Gcal\subseteq\binom{[n]}{m}$ with $(\Fcal,\Gcal)\in\mathrm{Forbid}(n,\{\ell\})$.
\begin{enumerate}
\item[(i)] If $k\mik \frac{n}{2}$ and $m\mik n-k$, then, setting
$t\coloneqq \min\{\ell,k-\ell\}$, we have
\begin{equation} \label{e7.1}
\frac{|\Fcal|}{\binom{n}{k}}\cdot \frac{|\Gcal|}{\binom{n}{m}} \mik 50\,
\sqrt{\frac{k(n-k)m(n-m)}{n^2}}\, \exp\Big(-\frac{t^2}{58^2\,k}\Big).
\end{equation}
\item[(ii)] If $k\mik \frac{n}{2}\mik n-k< m\mik n-k+\ell$, then, setting
$\bar{t}\coloneqq \min\{k-\ell,n-m-(k-\ell)\}$,
\begin{equation} \label{e7.2}
\frac{|\Fcal|}{\binom{n}{k}}\cdot \frac{|\Gcal|}{\binom{n}{m}} \mik 50\,
\sqrt{\frac{k(n-k)m(n-m)}{n^2}}\, \exp\Big(-\frac{\bar{t}^{\,2}}{58^2\, (n-m)}\Big).
\end{equation}
\end{enumerate}
\end{prop}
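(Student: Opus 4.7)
Both bounds reduce to Theorem \ref{main} via an appropriate choice of $p,p'$, followed by conversion from biased measure to cardinality through Fact \ref{f2.4}. The only mild complication is that Theorem \ref{main} requires $p\mik p'\mik \frac12$, so whenever the natural parameter $m/n$ exceeds $\frac12$ I first complement $\Gcal$. Throughout, complementation refers to setting $\Gcal' \coloneqq \{[n]\setminus B : B\in \Gcal\}$; the identity $|A\cap ([n]\setminus B)| = k - |A\cap B|$ shows that $(\Fcal,\Gcal) \in \Forb{n}{\{\ell\}}$ is equivalent to $(\Fcal,\Gcal')\in\Forb{n}{\{k-\ell\}}$, and $\mu_q(\Gcal')=\mu_{1-q}(\Gcal)$ for every $0<q<1$.

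For part (i), the assumption $m\mik n-k$ splits into the subcase $m\mik n/2$ and the subcase $n/2<m\mik n-k$. In the first subcase, I apply Theorem \ref{main} directly with $p=k/n$ and $p'=m/n$: the conditions $p\mik p'\mik \frac12$ and $\ell\mik k=pn$ are immediate, and the parameter produced by the theorem is $\min\{\ell, pn-\ell\} = \min\{\ell,k-\ell\} = t$. In the second subcase, I pass to $\Gcal'$ and apply Theorem \ref{main} with $p=k/n$ and $p'=(n-m)/n$, which now satisfies $p\mik p'\mik \frac12$ (using $n/2<m\mik n-k$); the forbidden value $k-\ell$ satisfies $k-\ell\mik k=pn$, and the theorem's parameter is $\min\{k-\ell, pn-(k-\ell)\}=\min\{k-\ell,\ell\}=t$, so the exponent matches. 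Returning from $\Gcal'$ to $\Gcal$ via $\mu_{(n-m)/n}(\Gcal')=\mu_{m/n}(\Gcal)$ gives $\mu_{k/n}(\Fcal)\,\mu_{m/n}(\Gcal)\mik 2\exp\bigl(-t^2/(58^2 k)\bigr)$ in both subcases.

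For part (ii), the hypothesis $n-k<m\mik n-k+\ell$ rearranges to $k-\ell\mik n-m<k\mik n/2$, so after complementation the family $\Gcal'$ lies in a strictly smaller layer than $\Fcal$. I apply Theorem \ref{main} with $p=(n-m)/n$ and $p'=k/n$, satisfying $p<p'\mik \frac12$, and with forbidden value $k-\ell\mik n-m=pn$; the theorem's parameter is $\min\{k-\ell,pn-(k-\ell)\}=\bar t$ and the denominator $58^2 pn=58^2(n-m)$ is exactly the one in (7.2). Using $\mu_p(\Gcal')=\mu_{m/n}(\Gcal)$ yields $\mu_{k/n}(\Fcal)\,\mu_{m/n}(\Gcal)\mik 2\exp\bigl(-\bar t^{\,2}/(58^2(n-m))\bigr)$. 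In both parts, the conclusion now follows from the right-hand inequality in (2.5) of Fact \ref{f2.4} applied to each of $\Fcal$ and $\Gcal$ in its respective layer; the two factors $5\sqrt{k(n-k)/n}$ and $5\sqrt{m(n-m)/n}$ combine with the factor $2$ from Theorem \ref{main} to give the constant $50$.

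The only place that demands care is the parameter bookkeeping after complementation: one must verify in each subcase that (a) the new forbidden value stays in $[0,pn]$, (b) the ordering $0<p\mik p'\mik \frac12$ is preserved, and (c) the parameter produced by Theorem \ref{main} (namely $t$ or $\bar t$) agrees with the one appearing in Proposition \ref{p7.1}. This arrangement also has the pleasant feature of bypassing the $p'\mik 1-p$ extension of Theorem \ref{main} advertised in Remark \ref{r1.2}; everything is deduced from the theorem as stated.
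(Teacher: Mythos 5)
Your proof is correct and takes essentially the same approach as the paper: reduce to a layer at height at most $n/2$ by complementing $\Gcal$, apply Theorem~\ref{main} with $p=k/n$ or $p=(n-m)/n$ and the matching $p'$, track that the forbidden value and $\min\{\cdot,\cdot\}$ parameter transform correctly, and finally convert biased measures to cardinalities via Fact~\ref{f2.4}. The paper packages the second subcase of (i) and all of (ii) as applications of part (i) to $(\Fcal,\overline{\Gcal})$ rather than unfolding the complementation and Theorem~\ref{main} directly, but this is only a presentational difference.
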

\begin{proof}
We start with the proof of part (i). Assume, first, that $m\mik \frac{n}{2}$. Then,
\begin{equation} \label{e7.3}
\frac{|\Fcal|}{\binom{n}{k}}\cdot \frac{|\Gcal|}{\binom{n}{m}}
\stackrel{\eqref{e2.5}}{\mik} 25\, \sqrt{\frac{k(n-k)m(n-m)}{n^2}}\,
\mu_{\frac{k}{n}}(\Fcal)\, \mu_{\frac{m}{n}}(\Gcal).
\end{equation}
Thus, in this case, \eqref{e7.1} follows from \eqref{e7.3} and \eqref{e1.main} applied
for ``$p=\frac{k}{n}$" and ``$p'=\frac{m}{n}$". Next, assume that $\frac{n}{2}\mik m\mik n-k$
and set $\overline{\Gcal}\coloneqq \{[n]\setminus G: G\in\Gcal\} \subseteq \binom{[n]}{n-m}$.
Notice that $(\Fcal,\overline{\Gcal})\in\mathrm{Forbid}(n,\{k-\ell\})$ and, moreover,
$k-\ell \mik k \mik n-m \mik\frac{n}{2}$. Therefore, applying the estimate obtained in the
first part of the proof to the pair $(\Fcal,\overline{\Gcal})$ and invoking the choice of $t$,
we obtain that
\begin{equation} \label{e7.4}
\frac{|\Fcal|}{\binom{n}{k}}\cdot \frac{|\Gcal|}{\binom{n}{m}} =
\frac{|\Fcal|}{\binom{n}{k}}\cdot \frac{|\overline{\Gcal}|}{\binom{n}{n-m}}
\mik 50\, \sqrt{\frac{k(n-k)m(n-m)}{n^2}}\, \exp\Big(-\frac{t^2}{58^2\, k}\Big).
\end{equation}

We proceed to the proof of part (ii). As before, we set
$\overline{\Gcal}\coloneqq \{[n]\setminus G:G\in\Gcal\}\subseteq \binom{[n]}{n-m}$, and we observe that
$(\Fcal,\overline{\Gcal})\in\mathrm{Forbid}(n,\{k-\ell\})$ and $k-\ell\mik n-m < k\mik\frac{n}{2}$.
Thus, applying part (i) to the pair $(\Fcal,\overline{\Gcal})$ and using the fact that
$\bar{t}=\min\{k-\ell,n-m-(k-\ell)\}$, we conclude that
\begin{equation} \label{e7.5}
\frac{|\Fcal|}{\binom{n}{k}}\cdot \frac{|\Gcal|}{\binom{n}{m}} =
\frac{|\Fcal|}{\binom{n}{k}}\cdot \frac{|\overline{\Gcal}|}{\binom{n}{n-m}}
\mik 50\, \sqrt{\frac{k(n-k)m(n-m)}{n^2}}\, \exp\Big(-\frac{\bar{t}^{\, 2}}{58^2\,(n-m)}\Big). \qedhere
\end{equation}
\end{proof}
The next result supplements Theorem \ref{main} and extends the subgaussian bound \eqref{e1.main}
to a wider range of parameters $p,p'$.
\begin{prop} \label{p7.2}
Let $n$ be a positive integer, let $0<p<\frac12 < p'\mik 1-p$, and let $\ell\mik pn$ be a nonnegative integer.
Also let $\Fcal,\Gcal\subseteq \{0,1\}^n$ be two families whose cross intersections forbid\, $\ell$.
Set $t\coloneqq \min\{\ell,pn-\ell\}$ and assume that $t\meg 3$. Then we have
\begin{equation} \label{e7.6}
\mu_p(\Fcal)\, \mu_{p'}(\Gcal) \mik t\cdot\exp\Big( - \frac{t^2}{6\cdot 30^2\, pn}\Big).
\end{equation}
\end{prop}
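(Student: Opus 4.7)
The plan is to reduce to Theorem \ref{main} by \emph{complementing} $\Gcal$. I set $\overline{\Gcal}\coloneqq \{[n]\setminus B : B\in\Gcal\}$, so that $\mu_{p'}(\Gcal)=\mu_{1-p'}(\overline{\Gcal})$. Since $p'\mik 1-p$, we have $p\mik 1-p'\mik \tfrac12$, which puts the pair $(1-p',\overline{\Gcal})$ in the regime of Theorem \ref{main}. The forbidden intersection translates via $|A\cap([n]\setminus B)|=|A|-|A\cap B|\neq |A|-\ell$, so the forbidden value depends on $|A|$; I therefore split $\Fcal$ into layers $\Fcal^k\coloneqq \Fcal\cap\binom{[n]}{k}$, so that $(\Fcal^k,\overline{\Gcal})\in\Forb{n}{\{k-\ell\}}$ for every $k$ with $\ell\mik k\mik pn+\ell$, which is exactly the setup in which Theorem \ref{main} can be invoked layer-wise.

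Concretely, I set $\tau\coloneqq t/30$ and decompose $\Fcal$ into a \emph{core} $\Fcal_{\mathrm{in}}$ (collecting all layers with $|k-pn|\mik \tau$) and a \emph{tail} $\Fcal_{\mathrm{out}}$. The tail is controlled by Lemma \ref{l2.3}(i) (upper deviation of $\mu_p$) together with its complemented counterpart (the lower deviation, obtained by passing to $\mu_{1-p}$ with $1-p>\tfrac12$ and invoking Lemma \ref{l2.3}(ii)), yielding $\mu_p(\Fcal_{\mathrm{out}})\mik 2\exp(-\tau^2/(6pn))=2\exp(-t^2/(5400\,pn))$. For each layer $\Fcal^k\subseteq \Fcal_{\mathrm{in}}$, the membership conditions $k\meg \ell$ and $k-\ell\mik pn$ are granted by $\tau\mik t\mik \min\{\ell,pn-\ell\}$, and Theorem \ref{main} applied to $(\Fcal^k,\overline{\Gcal})$ with parameters $p\mik 1-p'$ and forbidden value $k-\ell$ gives
\[
 \mu_p(\Fcal^k)\,\mu_{p'}(\Gcal)\mik 2\exp\Big(-\frac{t_k^2}{58^2\,pn}\Big), \qquad t_k\coloneqq \min\{k-\ell,\,pn-(k-\ell)\}.
\]
A direct check shows $t_k\meg t-\tau=29t/30$, and the numerical fact $(29/30)^2/58^2=841/(900\cdot 3364)\meg 1/5400$ ensures each per-layer contribution is at most $2\exp(-t^2/(5400\,pn))$.

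Summing the at most $2\tau+1\mik t/15+1$ layer bounds and adding the tail bound yields
\[
 \mu_p(\Fcal)\,\mu_{p'}(\Gcal)\mik \Big(\frac{2t}{15}+4\Big)\exp\Big(-\frac{t^2}{5400\,pn}\Big),
\]
which is at most $t\exp(-t^2/(5400\,pn))$ as soon as $t\meg 5$. The residual case $t\in\{3,4\}$ is dispatched separately: the standing hypotheses $\ell\meg t$ and $pn-\ell\meg t$ force $pn\meg 2t$, which makes $t\exp(-t^2/(5400\,pn))\meg t\exp(-t/10800)\meg 1$, so the trivial estimate $\mu_p(\Fcal)\,\mu_{p'}(\Gcal)\mik 1$ already gives the target bound.

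The delicate point is the calibration $\tau=t/30$: it is the critical value that simultaneously matches the Chernoff tail exponent to the target rate $t^2/(5400\,pn)$ and keeps enough slack $(1-1/30)^2/58^2\meg 1/5400$ in each per-layer application of Theorem \ref{main}, so that the two exponential rates agree at the cost of only a linear-in-$t$ prefactor. The only remaining subtleties in the write-up are verifying the Chernoff bound for the lower tail in the allowed range of $\tau$ and confirming the numerical inequalities above.
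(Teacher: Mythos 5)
Your proof is correct and follows essentially the same route as the paper: complement $\Gcal$, decompose $\Fcal$ into the layers with $|k-pn|\mik t/30$ plus a Chernoff-controlled tail, and invoke Theorem~\ref{main} for each layer $(\Fcal^k,\overline{\Gcal})$ with forbidden value $k-\ell$, with exactly the same calibration $\tau=t/30$. The only cosmetic difference is bookkeeping: you sum the per-layer bounds (giving prefactor $\tfrac{2t}{15}+4$ and forcing a separate check for $t\in\{3,4\}$), whereas the paper first uses a pigeonhole step to replace $\mu_p(\Fcal)$ by a tail term plus $\tfrac{t}{15}\,\mu_p(\Fcal_{i_0})$ for a single dominating layer $i_0$, which gives the slightly sharper prefactor $\tfrac{2t}{15}+2$ and covers $t\meg 3$ without a residual case.
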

Combining Theorem \ref{main} and Proposition \ref{p7.2}, we obtain the following corollary.
\begin{cor} \label{c7.3}
Let $n$ be a positive integer, let\, $\frac{6}{n}\mik p\mik p'\mik 1-p$, and
let $\ell\mik pn$ be a nonnegative integer. Also let $\Fcal,\Gcal\subseteq \{0,1\}^n$ be two families whose
cross intersections forbid\, $\ell$. Set $t\coloneqq \min\{\ell,pn-\ell\}$, and assume that
$t\meg 210 \sqrt{pn\ln(pn)}$. Then we have
\begin{equation} \label{e7.7}
\mu_p(\Fcal)\, \mu_{p'}(\Gcal) \mik \exp\Big( - \frac{t^2}{90^2\, pn}\Big).
\end{equation}
\end{cor}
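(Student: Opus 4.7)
The plan is to dichotomize on whether $p' \mik \tfrac12$ or $p' > \tfrac12$, applying Theorem \ref{main} in the first case and Proposition \ref{p7.2} in the second. In each case, the hypothesis $t \meg 210 \sqrt{pn\ln(pn)}$ will be used precisely to absorb the non-exponential prefactor (namely $2$, or $t$) into the exponential, at the cost of weakening the constant in the denominator from $58^2$ or $6\cdot 30^2=5400$ to $90^2=8100$.

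For the first case $p\mik p' \mik \tfrac12$, I would simply invoke Theorem \ref{main} to obtain $\mu_p(\Fcal)\mu_{p'}(\Gcal) \mik 2\exp(-t^2/(58^2\, pn))$; it then suffices to verify that $\ln 2 \mik t^2\bigl(\tfrac{1}{58^2}-\tfrac{1}{90^2}\bigr)/(pn)$. Since $pn \meg 6$ forces $\ln(pn)\meg 1$, the hypothesis already yields $t^2 \meg 210^2\, pn = 44100\, pn$, and plugging this in the right-hand side exceeds $7$, comfortably larger than $\ln 2$.

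For the second case $\tfrac12 < p' \mik 1-p$, the hypothesis easily gives $t>3$ (since $t\meg 210\sqrt{6\ln 6}$), so Proposition \ref{p7.2} applies and delivers $\mu_p(\Fcal)\mu_{p'}(\Gcal) \mik t\exp(-t^2/(5400\, pn))$. Here I would use $t\mik pn$ to get $\ln t \mik \ln(pn)$, and then reduce the target inequality to $t^2 \meg 16200\, pn \ln(pn)$ (after computing $\tfrac{1}{5400}-\tfrac{1}{8100}=\tfrac{1}{16200}$), which follows at once from $t^2 \meg 44100\, pn \ln(pn)$.

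There is no genuine mathematical obstacle here: the entire content is the verification that the gap between $58^2$ and $90^2$ (respectively between $5400$ and $8100$) is wide enough to swallow a $\ln 2$ (respectively a $\ln t$). Both checks are elementary numerical estimates, and the constants $210$ and $90$ appearing in the statement of the corollary are precisely calibrated so that both pass with a bit of slack.
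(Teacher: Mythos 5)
Your proof is correct and matches the paper's intended argument: the paper simply states that the corollary is obtained by "combining Theorem \ref{main} and Proposition \ref{p7.2}," which is exactly your dichotomy on $p'\mik\frac12$ versus $p'>\frac12$, with the hypothesis $t\meg 210\sqrt{pn\ln(pn)}$ used to absorb the prefactors ($2$ and $t$, respectively) into the weakened exponent. The numerical checks you perform are the substance of the corollary, and both go through with room to spare.
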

We proceed to the proof of Proposition \ref{p7.2}.
\begin{proof}[Proof of Proposition \emph{\ref{p7.2}}]
Set $m\coloneqq \frac{t}{30}$, and observe that, by Lemma \ref{l2.3},
\begin{equation} \label{e7.8}
\max\big\{ \mu_p\big([n]^{<pn-m}\big), \mu_p\big([n]^{>pn+m}\big)\big\} \mik \exp\Big(-\frac{m^2}{6pn}\Big)
\mik \exp\Big(- \frac{t^2}{6\cdot 30^2pn}\Big).
\end{equation}
By \eqref{e7.8} and the choice of $m$, there is a nonnegative integer $i_0$ with $pn-m\mik i_0\mik pn+m$
such that, setting $\Fcal_{i_0}\coloneqq \Fcal\cap \binom{[n]}{i_0}$, we have
\begin{equation} \label{e7.9}
\mu_p(\Fcal) \mik 2\exp\Big(- \frac{t^2}{6\cdot 30^2pn}\Big)+\frac{t}{15}\, \mu_p(\Fcal_{i_0}).
\end{equation}
Set $\overline{\Gcal}\coloneqq \big\{ [n]\setminus G: G\in \Gcal\big\}$, and notice that
\begin{equation} \label{e7.10}
\mu_p(\Fcal)\, \mu_{p'}(\Gcal)=\mu_p(\Fcal)\, \mu_{1-p'}(\overline{\Gcal}) \stackrel{\eqref{e7.9}}{\mik}
\frac{t}{15}\, \mu_p(\Fcal_{i_0})\, \mu_{1-p'}(\overline{\Gcal}) + 2\exp\Big(- \frac{t^2}{6\cdot 30^2pn}\Big).
\end{equation}
Next, observe that $i_0\meg \ell$ and $(\Fcal_{i_0},\overline{\Gcal})\in\mathrm{Forbid}(n,\{i_0-\ell\})$.
Since $0<p\mik 1-p'\mik \frac12$ and $i_0-\ell\mik pn$, by Theorem \ref{main} and \eqref{e7.10}, we obtain that
\begin{equation} \label{e7.11}
\mu_p(\Fcal)\, \mu_{p'}(\Gcal) \mik  2\exp\Big(- \frac{t^2}{6\cdot 30^2pn}\Big) +
\frac{2t}{15}\,  \exp\Big(- \frac{\bar{t}^2}{58^2 pn}\Big),
\end{equation}
where $\bar{t}\coloneqq \min\{i_0-\ell, pn-i_0+\ell\}$.
\begin{claim} \label{cnew}
We have that $|\bar{t}-t|\mik m=\frac{t}{30}$.
\end{claim}
\begin{proof}[Proof of Claim \emph{\ref{cnew}}]
Suppose, towards a contradiction, that $|\bar{t}-t|>m$; that is, either $t+m<\bar{t}$ or $\bar{t}<t-m$.
We recall that $pn-m\mik i_0\mik pn+m$.

Assume, first, that $t+m<\bar{t}$. By the definition of $t$ and $\bar{t}$, we see that
\begin{enumerate}
\item[(i)] $\min\{\ell, pn-\ell\} +m < \bar{t} \mik i_0-\ell$ and
\item[(ii)] $\min\{\ell, pn-\ell\} +m < \bar{t} \mik pn-i_0+\ell$.
\end{enumerate}
If $\ell\mik pn-\ell$, then, by (ii), we obtain that $\ell+m<pn-i_0+\ell$, which is a contradiction; on the other hand,
if $pn-\ell\mik \ell$, then, by (i), we have $pn-\ell+m<i_0-\ell$ which leads, again, to a contradiction.

Next assume that $\bar{t}<t-m$. Then,
\begin{enumerate}
\item[(iii)] $\min\{i_0-\ell, pn-i_0+\ell\} +m < t\mik \ell$ and
\item[(iv)] $\min\{i_0-\ell, pn-i_0+\ell\} +m  < t \mik pn-\ell$.
\end{enumerate}
Consequently, if $i_0-\ell\mik pn-i_0+\ell$, then, by (iv), we have $i_0-\ell+m< pn-\ell$, which is a contradiction;
finally, if $pn-i_0+\ell\mik i_0-\ell$, then, by (iii), we have $pn-i_0+\ell+m<\ell$ which is also a contradiction.
\end{proof}
By \eqref{e7.11} and Claim \ref{cnew}, we conclude that
\begin{align}
\label{e7.12} \mu_p(\Fcal)\, \mu_{p'}(\Gcal) & \mik 2\exp\Big(- \frac{t^2}{6\cdot 30^2pn}\Big) +
\frac{2t}{15}\,  \exp\Big(- \frac{\bar{t}^2}{4\cdot 30^2 pn}\Big) \\
& \mik t\cdot \exp\Big( - \frac{t^2}{6\cdot 30^2\, pn}\Big). \nonumber \qedhere
\end{align}
\end{proof}


\section{Optimality} \label{sec8}

\numberwithin{equation}{section}

We proceed to discuss the optimality of the bounds obtained by Theorem \ref{main} and its extension, Corollary \ref{c7.3}.
Specifically, fix a positive integer $n$, $0<p\mik p'\mik 1-p$ and a nonnegative integer $\ell\mik pn$, set
\begin{align} \label{e8.1}
\ee_n(p,p',\ell)\coloneqq \max\!\big\{\ee>0: &
\, \mu_p(\Fcal)\,\mu_{p'}(\Gcal)\mik e^{-\ee} \text{ for every pair of} \\
& \ \ \ \ \ \ \ \ \ \
\text{nonempty families } (\Fcal,\Gcal)\in\mathrm{Forbid}(n,\{\ell\}) \big\}, \nonumber
\end{align}
and observe that our goal reduces to that of obtaining appropriate upper bounds for $\ee_n(p,p',\ell)$.
To this end, we shall additionally assume that\footnote{Notice \eqref{e8.2} slightly narrows down the regime
where the bound \eqref{e1.main} is non-trivial.}
\begin{equation} \label{e8.2}
\frac{16}{n}\mik p \ \ \ \text{ and } \ \ \ 2\sqrt{pn\ln(pn)}\mik \ell\mik pn-2\sqrt{pn\ln(pn)};
\end{equation}
we will also use the following standard lower bounds of the biased measures of the
tails of the binomial distribution (see, \textit{e.g.}, \cite[p. 115]{Ash65}).
\begin{lem} \label{l8.1}
Let $k, n$ be positive integers, and let\, $0<p\mik \frac12$. If\, $k\mik pn$, then
\begin{equation} \label{e8.3}
\mu_p\left([n]^{\mik k}\right) \meg \frac{1}{\sqrt{8n\left(1-\frac{k}{n}\right)\!\frac{k}{n}}}\,
\exp\left(-\frac{(k-pn)^2}{p(1-p)n}\right),
\end{equation}
while if\, $pn\mik k< 2pn$, then
\begin{equation} \label{e8.4}
\mu_p\left([n]^{\meg k}\right) \meg \frac{1}{\sqrt{8n\left(1-\frac{k}{n}\right)\!\frac{k}{n}}}\,
\exp\left(\frac{(k-pn)^2}{p(1-p)n}\right).
\end{equation}
\end{lem}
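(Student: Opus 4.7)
The plan is to derive both parts of Lemma \ref{l8.1} from a single one-line observation: the relevant tail contains the ``most likely'' layer $\binom{[n]}{k}$, so it is bounded below by $\binom{n}{k}\,p^k(1-p)^{n-k}$. The target estimate then reduces to a Stirling-type lower bound on this quantity plus a quadratic upper bound on Bernoulli KL divergence. As a preliminary remark: in \eqref{e8.4} the exponent is written with a positive sign, which would force the right-hand side above $1$ as soon as $(k-pn)^2$ is comparable to $p(1-p)n$, and hence cannot be a lower bound on a probability. This is a typographical missing-minus-sign; the statement that the argument below actually establishes is \eqref{e8.4} with exponent $-(k-pn)^2/(p(1-p)n)$, perfectly mirroring \eqref{e8.3}.

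First I would note that in both regimes of the lemma every $A\in\binom{[n]}{k}$ lies in the corresponding tail set, giving
\begin{equation*}
\mu_p\bigl([n]^{\mik k}\bigr),\ \mu_p\bigl([n]^{\meg k}\bigr) \;\meg\; \binom{n}{k}\,p^k(1-p)^{n-k}.
\end{equation*}
Consequently both parts reduce to lower-bounding this common quantity, and the distinction between lower and upper tail disappears from the rest of the argument.

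Next I would apply the Stirling-based lower bound $\binom{n}{k}\meg \sqrt{n/(8k(n-k))}\cdot 2^{nH(k/n)}$ from \eqref{e2.6}. Using $nH(k/n)=-k\log_2(k/n)-(n-k)\log_2((n-k)/n)$, one checks that
\begin{equation*}
2^{nH(k/n)}\,p^k(1-p)^{n-k}=\exp\!\bigl(-n\,D(k/n\,\|\,p)\bigr),
\end{equation*}
where $D(q\|p)\coloneqq q\ln(q/p)+(1-q)\ln((1-q)/(1-p))$ is the Bernoulli KL divergence in nats. Since $\sqrt{n/(8k(n-k))}=1/\sqrt{8n(k/n)(1-k/n)}$, the prefactor in the statement is already matched, so it only remains to prove $nD(k/n\|p)\mik (k-pn)^2/(p(1-p)n)$.

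This last inequality is the only place where there is anything to verify, and it follows in one line from $\ln x\mik x-1$ applied to $x=q/p$ and $x=(1-q)/(1-p)$, yielding $D(q\|p)\mik (q-p)^2/(p(1-p))$; substituting $q=k/n$ completes the proof. I do not foresee any genuine obstacle: the argument is a textbook Stirling-plus-$\ln$-inequality computation relying on exactly one nontrivial tool (Fact \ref{f2.4}) already supplied in the excerpt, and the two tails are treated by literally the same layer lower bound.
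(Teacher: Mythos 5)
Your proof is correct, and it is essentially the argument behind the paper's citation: the paper gives no proof of Lemma \ref{l8.1}, referring instead to \cite[p.~115]{Ash65}, where the bound is obtained exactly as you do---lower-bounding the tail by the single layer $\binom{n}{k}p^k(1-p)^{n-k}$, applying the Stirling-type estimate \eqref{e2.6}, and (to pass from the Kullback--Leibler exponent to the quadratic one) using $D(q\,\|\,p)\le (q-p)^2/\bigl(p(1-p)\bigr)$, which follows from $\ln x\le x-1$. Your reading of the sign in \eqref{e8.4} as a typographical error is also right: with a positive exponent the right-hand side can exceed $1$, and the paper's own application of \eqref{e8.4} in \eqref{e8.8} uses the bound with the negative exponent.
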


\subsection{The high-intersection case: $\ell\meg cpn$ for some constant $c>0$} \label{subsec8.1}

Set $\Fcal\coloneqq [n]^{<\ell}$ and $\Gcal\coloneqq \{0,1\}^n$. Then
$(\Fcal,\Gcal)\in \mathrm{Forbid}(n,\{\ell\})$ and, moreover,
\begin{align} \label{e8.5}
\mu_p(\Fcal)\, \mu_{p'}(\Gcal) =\mu_p\left([n]^{<\ell}\right)	
& \stackrel{\eqref{e8.3}}{\meg}
\frac{1}{\sqrt{8(\ell-1)(1-\frac{\ell-1}{n})}}\, \exp\left(-\frac{(pn-\ell+1)^2}{p(1-p)n}\right)\\
& \hspace{1cm} \meg \frac{1}{\sqrt{8pn}}\, \exp\left(-\frac{(pn-\ell)^2+2(pn-\ell)+1}{p(1-p)n}\right). \nonumber
\end{align}
Next, set $C\coloneqq \max\{2,\frac{pn}{\ell}\} \mik \max\{2,\frac{1}{c}\}$ and $t\coloneqq \min\{\ell,pn-\ell\}$;
notice that if $\ell\meg \frac{pn}{2}$, then $C=2$ and $t=pn-\ell$, while if $\ell<\frac{pn}{2}$, then
$C\mik \frac{1}{c}$ and $t=\ell$. By \eqref{e8.2}, \eqref{e8.5} and taking into account the previous
observations, it is easy to see that
\begin{equation} \label{e8.6}
\ee_n(p,p',\ell) \mik 4(C-1)^2\, \frac{t^2}{pn}.
\end{equation}
In particular, under \eqref{e8.2}, if $\ell\meg \frac{pn}{2}$, then
$\ee_n(p,p',\ell) \mik 4\, \frac{t^2}{pn}$.
\begin{rem} \label{r8.2}
Note that if $\mathcal{F}\coloneqq [n]^{<\ell}$, then $|A\cap B|\neq\ell$ for every
$A,B\in\mathcal{F}$ and,~by~\eqref{e8.2}~and \eqref{e8.5}, we have
$\mu_p(\mathcal{F})\meg \exp\big(- 4(C-1)^2\, \frac{t^2}{pn}\big)$, where $C= \max\{2,\frac{pn}{\ell}\}$.
Thus, Theorem~\ref{main} and Corollary \ref{c7.3} are optimal in the regime
$\ell\meg cpn$ also in the non-crossing case.
\end{rem}

\subsection{The symmetric case: $p'=1-p$} \label{subsec8.2}

We will show that, under \eqref{e8.2}, we have
\begin{equation} \label{e8.7}
\ee_n(p,1-p,\ell) \mik 4\, \frac{t^2}{pn},
\end{equation}
where, as usual, $t\coloneqq \min\{\ell,pn-\ell\}$; note that \eqref{e8.7} includes the important special
case $p=p'=\frac12$ that corresponds to the uniform probability measure on $\{0,1\}^n$.

The subcase ``$\ell\meg \frac{pn}{2}$" follows of course from \eqref{e8.6},
and so we may assume that $\ell\mik \frac{pn}{2}$. Set $\Fcal\coloneqq [n]^{>pn+\frac{\ell}{2}}$
and $\Gcal\coloneqq [n]^{\meg(1-p)n+\frac{\ell}{2}}$. Notice that $(\Fcal,\Gcal)\in\mathrm{Forbid}(n,\{\ell\})$ and
\begin{align}
\label{e8.8} \mu_{p}(\Fcal) & \meg \mu_p\big([n]^{\meg pn+\frac{\ell}{2}+1}\big) \stackrel{\eqref{e8.4}}{\meg}
\frac{1}{\sqrt{8pn}}\, \exp\left(-\frac{\ell^2}{4p(1-p)n}-\frac{\ell+1}{p(1-p)n}\right), \\
\label{e8.9} \mu_{1-p}(\Gcal) & = \mu_p\big([n]^{\mik pn-\frac{\ell}{2}}\big) \stackrel{\eqref{e8.3}}{\meg}
\frac{1}{\sqrt{8pn}}\, \exp\left(-\frac{\ell^2}{4p(1-p)n}\right).
\end{align}
Therefore, using \eqref{e8.2} and observing that in this case we have $t=\ell$, we obtain that
\begin{equation} \label{e8.10}
\mu_p(\Fcal)\, \mu_{1-p}(\Gcal)\meg \exp\left(-2\, \frac{t^2}{p(1-p)n}\right),
\end{equation}
which clearly yields \eqref{e8.7}.
\begin{rem} \label{r8.3}
It is unclear whether the subgaussian bound \eqref{e1.main} is optimal (modulo universal constants)
in the low-intersection and asymmetric case, namely, when $\ell=o(pn)$ and $p'<1-p$. The optimality of
Theorem \ref{main} in this regime is closely related\footnote{In fact, the techniques developed in this
paper show that these two problems are essentially equivalent.} to the problem of obtaining sharp estimates
of the product $\mu_p(\Fcal)\,\mu_p(\Gcal)$ of the biased measures of a pair of families
$\Fcal,\Gcal\subseteq \{0,1\}^n$ that are \emph{cross-$\ell$-intersecting}, that is, they satisfy
$|A\cap B|\meg\ell$ for every $A\in\Fcal$ and every $B\in\Gcal$. The non-crossing case, $\Fcal=\Gcal$,
is completely understood thanks to the seminal work of Ahlswede--Khachatrian \cite{AK97,AK99}
and the more recent work of Filmus \cite{Fi17}; see, also, \cite[Theorem 3.28]{Fi13} for some progress
for general cross-intersecting families.
\end{rem}


\section{Supersaturation} \label{sec9}

\numberwithin{equation}{section}

The main result in this section is a supersaturation version of Proposition \ref{p7.1},
which is the analogue of \cite[Theorem 1.14]{FR87}. To state it we need, first,
to introduce some pieces of notation. Let $n$ be a positive integer, let
$\Fcal,\Gcal\subseteq \{0,1\}^n$, and let $S\subseteq [n]$.
Given a nonnegative integer $\ell\mik n$, we set
\begin{equation} \label{e9.1}
I_\ell(\Fcal,\Gcal)\coloneqq \big\{(F,G)\in\Fcal\times\Gcal: |F\cap G|=\ell\big\}
\ \ \ \text{ and } \ \ \ i_\ell(\Fcal,\Gcal)\coloneqq |I_\ell(\Fcal,\Gcal)|
\end{equation}
and, respectively,
\begin{equation} \label{e9.2}
I_\ell(S,\Gcal)\coloneqq \big\{G\in\Gcal: |S\cap G|=\ell\big\}
\ \ \ \text{ and } \ \ \ i_\ell(S,\Gcal)\coloneqq |I_\ell(S,\Gcal)|.
\end{equation}
We have the following theorem.
\begin{thm} \label{t9.1}
Let $\ell,k,n$ be positive integers with $\ell< k\mik \frac{n}{2}$, and set $T\coloneqq \max\{\ell,k-\ell\}$.
Also let $\delta>0$ and assume that $10^5 \sqrt{k} (\ln n)^{3/2}\mik \delta \mik \min\{\ell,k-\ell\}$. Finally, set
\begin{equation} \label{e9.3}
\ee(\delta)\coloneqq \frac{\delta^4}{C\, T^2\, \ell\,\big(\ln(\frac{n}{\delta})\big)^4},
\end{equation}
where\, $C\coloneqq 2^{8}\, 58^6\, 60^4$. If\, $\Fcal\subseteq\binom{[n]}{k}$ and\,
$\Gcal\subseteq \binom{[n]}{n-k}$ satisfy
\begin{equation} \label{e9.4}
\frac{|\Fcal|}{\binom{n}{k}}\cdot\frac{|\Gcal|}{\binom{n}{n-k}}>\exp\big(-\ee(\delta)\big),
\end{equation}
then we have
\begin{equation} \label{e9.5}
\frac{i_{\ell}(\Fcal,\Gcal)}{i_{\ell}\big(\binom{[n]}{k},\binom{[n]}{n-k}\big)}> \exp\left(-\delta\right).
\end{equation}
\end{thm}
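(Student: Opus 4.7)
I would argue by contradiction and reduce the supersaturation statement to a direct application of Proposition \ref{p7.1} via a deletion-and-iterate scheme. Assume \eqref{e9.4} holds but \eqref{e9.5} fails, so that
\[
i_\ell(\Fcal,\Gcal)\mik e^{-\delta}\binom{n}{k}\binom{k}{\ell}\binom{n-k}{\ell}.
\]
The goal is to exhibit subfamilies $\Fcal^{*}\subseteq\Fcal$ and $\Gcal^{*}\subseteq\Gcal$ with $(\Fcal^{*},\Gcal^{*})\in\mathrm{Forbid}(n,\{\ell\})$ whose relative density still exceeds the Forbid upper bound supplied by Proposition \ref{p7.1}, contradicting \eqref{e9.4} through the choice of $\ee(\delta)$.

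The basic one-step mechanism is the following. Fix a threshold $\eta>0$ and let $\Fcal_{\mathrm{hi}}\coloneqq\{F\in\Fcal: i_\ell(\{F\},\Gcal)>\eta|\Gcal|\}$; by Markov's inequality $|\Fcal_{\mathrm{hi}}|\mik i_\ell(\Fcal,\Gcal)/(\eta|\Gcal|)$, and each remaining $F\in\Fcal\setminus\Fcal_{\mathrm{hi}}$ has $\ell$-link of size at most $\eta|\Gcal|$. Symmetrically trimming $\Gcal$, and then greedily deleting the remaining maximum-degree vertices in the bipartite $\ell$-intersection graph, produces a pair $(\Fcal^{*},\Gcal^{*})\in\mathrm{Forbid}(n,\{\ell\})$ with
\[
\frac{|\Fcal^{*}|}{|\Fcal|}\cdot\frac{|\Gcal^{*}|}{|\Gcal|}\meg 1-O\!\left(\frac{i_\ell(\Fcal,\Gcal)}{\eta\,|\Fcal|\,|\Gcal|}\right).
\]
Applying part (ii) of Proposition \ref{p7.1} (or part (i) after complementing $\Gcal$) with $m=n-k$ and $\bar t=\min\{\ell,k-\ell\}$ then forces
\[
\frac{|\Fcal^{*}|}{\binom{n}{k}}\cdot\frac{|\Gcal^{*}|}{\binom{n}{n-k}}\mik 50\,\sqrt{\tfrac{k(n-k)m(n-m)}{n^{2}}}\,\exp\!\Big(-\frac{\bar t^{\,2}}{58^{2}(n-m)}\Big),
\]
and combining with the lower bound inherited from \eqref{e9.4} after the deletion loss yields the sought contradiction whenever $\ee(\delta)$ is small enough relative to $\bar t^{\,2}/k$ minus that loss.

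The main technical obstacle is the sharpness of the exponent $\ee(\delta)=\delta^{4}/(C T^{2}\ell(\ln(n/\delta))^{4})$. A single-shot deletion with a fixed $\eta$ squanders roughly a factor $e^{-\delta}$ in the product density, which is only useful when $\delta$ is already comparable to $\bar t^{\,2}/k$. To reach the stated quartic dependence one must instead iterate the deletion across $\Theta(\ln(n/\delta))$ geometrically decreasing scales of $\eta$, removing at each scale only an $O(1/\ln(n/\delta))$ fraction of each side and re-tracking $i_\ell$ after each pass. The polylogarithmic denominator and the power $\delta^{4}$ in $\ee(\delta)$ are precisely the cost of this telescoping: each of the $\ln(n/\delta)$ passes contributes a factor $\delta/\ln(n/\delta)$ to the admissible density slack, and Cauchy--Schwarz-type bookkeeping on the trimmed sides squares it.

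Carrying out this iteration rigorously---in particular, verifying that the $\ell$-link degrees concentrate geometrically across scales, that the trimmed pair remains dense enough to feed back into Proposition \ref{p7.1} with the shifted parameters, and that the constants $C=2^{8}\,58^{6}\,60^{4}$ and the factor $T^{2}\ell$ in $\ee(\delta)$ arise from this accounting---is where the bulk of the work will lie. The statement $10^{5}\sqrt{k}(\ln n)^{3/2}\mik\delta$ is what guarantees that the iteration does not exhaust the family before reaching the Forbid regime in Proposition \ref{p7.1}, so one must also check that this lower bound on $\delta$ is exactly what the iteration consumes.
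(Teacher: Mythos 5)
Your approach is genuinely different from the paper's, and unfortunately the central deletion step has a gap that I do not see how to close within the same framework. The claimed bound
\[
\frac{|\Fcal^*|}{|\Fcal|}\cdot\frac{|\Gcal^*|}{|\Gcal|}\meg 1-O\!\left(\frac{i_\ell(\Fcal,\Gcal)}{\eta\,|\Fcal|\,|\Gcal|}\right)
\]
is not a consequence of Markov plus greedy deletion. After trimming, each surviving vertex has $\ell$-degree at most $\eta|\Gcal|$ (resp.\ $\eta|\Fcal|$), but this is only an upper bound: a greedy deletion may remove far fewer than $\eta|\Gcal|$ edges per step, and the number of deletions needed to reach a pair in $\mathrm{Forbid}(n,\{\ell\})$ is a vertex cover, hence by K\"{o}nig's theorem a matching number, which the edge count alone does not control. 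For instance, after trimming the residual $\ell$-intersection graph could be an induced matching on $\Theta(\min(|\Fcal|,|\Gcal|))$ edges, and then any bi-independent pair $(\Fcal^*,\Gcal^*)$ must drop one endpoint of each matching edge, costing a constant-fraction density loss that bears no relation to $i_\ell/(\eta|\Fcal||\Gcal|)$. Worse, in the regime of Theorem \ref{t9.1}, the failure of \eqref{e9.5} only gives $i_\ell(\Fcal,\Gcal)\mik e^{-\delta}\binom{n}{k}\binom{k}{\ell}\binom{n-k}{\ell}$, and the right-hand side typically vastly exceeds $|\Fcal|$ and $|\Gcal|$ (already $e^{-\delta}\binom{k}{\ell}\meg (2/\sqrt{e})^k$ when $\ell\approx k/2$ and $\delta\mik k/2$), so even the optimistic deletion count $i_\ell/(\eta|\Gcal|)$ can exceed $|\Fcal|$. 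Iterating over geometric scales of $\eta$ does not rescue this: the matching/cover obstruction recurs at every pass, and nothing in the trimming guarantees that $i_\ell$ of the residual pair decays geometrically.

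The paper's proof never deletes. Following Theorem 1.14 of \cite{FR87}, it locates a core $A_0\in\binom{[n]}{2\ell}$ such that both traces $\Fcal_{A_0},\Gcal_{A_0}$ are dense while the quantity $y_{A_0}$ (the number of $\ell$-intersecting pairs $(F,G)$ with $F\in\Fcal_{A_0}$, $G\in\Gcal_{A_0}$ and $|F\cap G\cap A_0|=\ell-\beta$) is small. Getting such an $A_0$ requires \cite[Lemma 4.1]{FR87}, an application of Proposition \ref{p7.1} to the auxiliary families $\Scal_\Fcal,\Scal_\Gcal\subseteq\binom{[n]}{2\ell-\alpha}$, and an averaging over $A\in\Acal$ against the supersaturation hypothesis. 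It then applies Theorem \ref{main} at $p=p'=\tfrac12$ to the trace families $\Dcal^*_\Fcal,\Dcal^*_\Gcal\subseteq 2^{A_0}$ to find $B_1,B_2$ with $|B_1\cap B_2|=\ell-\beta$, and applies Proposition \ref{p7.1} once more to the extension families $\Xcal,\Ycal$ on $[n]\setminus A_0$ to find $H_1,H_2$ with $|H_1\cap H_2|=\beta$, which assembles a forbidden pair inside $\Dcal_\Fcal\times\Dcal_\Gcal$ and gives the contradiction. This two-scale, link-based architecture (apply the forbidden-intersection bound inside $A_0$ and then outside $A_0$) is precisely what produces the quartic dependence $\ee(\delta)\asymp\delta^4/(T^2\ell(\ln(n/\delta))^4)$, and it is the structural input (quasirandomness of the $\ell$-intersection graph exploited through the link) that a raw edge-counting deletion scheme is missing. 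If you want to pursue a deletion strategy, you would first need to bound the matching number of the residual graph, and that seems to require importing essentially the same link decomposition.
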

By Theorem \ref{t9.1}, we obtain the following corollary.
\begin{cor} \label{9.cor.new}
Let $\ell,k,n, T, \delta$ be as in Theorem \emph{\ref{t9.1}} and set
$\ee'(\delta)\coloneqq \frac{\delta^4}{C\, T^2\, (k-\ell)\,(\ln(\frac{n}{\delta}))^4}$,
where\, $C\coloneqq 2^{8}\, 58^6\, 60^4$. If\, $\Fcal, \Gcal\subseteq\binom{[n]}{k}$ satisfy
$\frac{|\Fcal|}{\binom{n}{k}}\cdot\frac{|\Gcal|}{\binom{n}{k}}>\exp\big(-\ee'(\delta)\big)$, then
\begin{equation} \label{en.9.1}
\frac{i_{\ell}(\Fcal,\Gcal)}{i_{\ell}\big(\binom{[n]}{k},\binom{[n]}{k}\big)}> \exp\left(-\delta\right).
\end{equation}
\end{cor}
\begin{proof}
Set $\overline{\Gcal}\coloneqq \big\{[n]\setminus G: G\in \Gcal\big\}\subseteq \binom{[n]}{n-k}$,
and notice that $i_{\ell}(\Fcal,\Gcal)=i_{k-\ell}(\Fcal,\overline{\Gcal})$ and
$i_{\ell}\big(\binom{[n]}{k},\binom{[n]}{k}\big)=i_{k-\ell}\big(\binom{[n]}{k},\binom{[n]}{n-k}\big)$.
The result follows from these observations and Theorem~\ref{t9.1} applied to $\Fcal$ and $\overline{\Gcal}$.
\end{proof}
We proceed to the proof of Theorem \ref{t9.1}.
\begin{proof}[Proof of Theorem \emph{\ref{t9.1}}]
We argue as in the proof of \cite[Theorem 1.14]{FR87} with the main new ingredients being
Theorem \ref{main} and Proposition \ref{p7.1}. For the reader's convenience we will first give a high level overview of the proof.

Our analysis is focused on the way the elements of $\Fcal$ and $\Gcal$ are correlated with arbitrary sets of size $2\ell$.
More precisely, for every $A\in\binom{[n]}{2\ell}$ we define $\Fcal_A$ and $\Gcal_A$ to be the sets $F\in \Fcal$
and $G\in \Gcal$, respectively, whose intersection with $A$ is roughly equal to~$\ell$.
We shall informally refer to these sets as ``good"\!.

The proof is then divided into three parts. In the first part, we show that there are many $A\in\binom{[n]}{2\ell}$ for which
both $\Fcal_A$ and $\Gcal_A$ are large---this is the content of Claim \ref{c9.2}. In the second part, we work towards
a contradiction  and we show that if $I_{\ell}(\Fcal,\Gcal)$ is small, then there exists $A_0\in\binom{[n]}{2\ell}$ for which
both $\Fcal_{A_0}$ and $\Gcal_{A_0}$ are large, and at the same time, there are few number of pairs $(F,G)\in\Fcal_{A_0}\times\Gcal_{A_0}$ whose intersection is of size $\ell$ and the size of its trace on $A_0$
is a specific proportion of $\ell$---this is achieved in Claim \ref{cl-8-new}.
Finally, in the third step of the proof we arrive to a contradiction. Specifically, the properties of $A_0$ imply that most of the ``good" $F\in \Fcal_{A_0}$ and $G\in \Gcal_{A_0}$ are also bad in the sense that they do not form a pair whose  intersection is of size $\ell$ and its trace on $A_0$ has size specified by the previous step; this tension is enough to derive
the contradiction.

We proceed to the details. We fix $\Fcal,\Gcal$ that satisfy~\eqref{e9.4} and we assume, towards a contradiction,
that \eqref{e9.5} does not hold true. We set $d(\Fcal)\coloneqq \frac{|\Fcal|}{\binom{n}{k}}$,
$d(\Gcal)\coloneqq\frac{|\Gcal|}{\binom{n}{n-k}}$ and
\begin{equation} \label{e9.6}
\alpha\coloneqq 116\, \sqrt{\ell\,\varepsilon(\delta)}\, \stackrel{\eqref{e9.3}}{=}
\frac{\delta^2}{(2^3\,58^2\,60^2) \, T \big(\ln(\frac{n}{\delta})\big)^2}.
\end{equation}
We also notice that for every $S\in\binom{[n]}{2\ell-\alpha}$ we have
\begin{enumerate}
\item[$\bullet$] $i_{\ell}\big( \binom{[n]}{k},\binom{[n]}{n-k}\big)=\binom{n}{k}\binom{k}{\ell}\binom{n-k}{n-k-\ell}$,
\item[$\bullet$] $i_{\ell}\big(S,\binom{[n]}{k}\big)=\binom{2\ell-\alpha}{\ell}\binom{n-2\ell+\alpha}{k-\ell}$,
\item[$\bullet$] $i_{\ell}\big(S,\binom{[n]}{n-k}\big)=\binom{2\ell-\alpha}{\ell}\binom{n-2\ell+\alpha}{n-k-\ell}$.
\end{enumerate}

Next we set
\begin{enumerate}
\item[$\bullet$] $\Scal_{\Fcal} \coloneqq \big\{ S\in\binom{[n]}{2\ell-\alpha}:
i_{\ell}(S,\Fcal)\meg K_{\Fcal}\big\}$ and
$\Scal_{\Gcal}\coloneqq \big\{ S\in\binom{[n]}{2\ell-\alpha}:
i_{\ell}(S,\Gcal)\meg K_{\Gcal}\big\}$,
\end{enumerate}
where $K_{\Fcal}\coloneqq \frac{d(\Fcal)}{2} i_{\ell}\big(S,\binom{[n]}{k}\big)$ and
$K_{\Gcal}\coloneqq \frac{d(\Gcal)}{2}i_{\ell}\big(S,\binom{[n]}{n-k}\big)$.
By \cite[Lemma 4.1]{FR87}, we~have
\begin{equation} \label{e9.7}
|\Scal_{\Fcal}| \meg \frac{d(\Fcal)}{2}\,\binom{n}{2\ell-\alpha}
\ \ \ \text{ and } \ \ \
|\Scal_{\Gcal}| \meg \frac{d(\Gcal)}{2}\, \binom{n}{2\ell-\alpha}.
\end{equation}
Moreover, for every $A\in\binom{[n]}{2\ell}$ set
\begin{enumerate}
\item[$\bullet$] $\Fcal_A\coloneqq \{F\in\Fcal:\ell\mik |F\cap A|\mik \ell+\alpha\}$
and $\Gcal_A\coloneqq \{G\in\Gcal:\ell\mik |G\cap A| \mik \ell+\alpha\}$,
\end{enumerate}
and define the family
\begin{equation} \label{e9.8}
\Acal\coloneqq \bigg\{A\in\binom{[n]}{2\ell}:
|\Fcal_A|\meg K_{\Fcal} \text{ and } |\Gcal_A| \meg K_{\Gcal}\bigg\}.
\end{equation}
\begin{claim} \label{c9.2}
We have
\begin{equation} \label{e9.9}
|\Acal| \meg \max\left\{ \frac{|\Scal_{\Fcal}|}{2\binom{2\ell}{\alpha}},
\frac{|\Scal_{\Gcal}|}{2\binom{2\ell}{\alpha}}\right\} \meg \frac{1}{4}\,
\frac{\binom{n}{2\ell-\alpha}}{\binom{2\ell}{\alpha}}\, \exp\Big(-\frac{\ee(\delta)}{2}\Big).
\end{equation}
\end{claim}
\begin{proof}[Proof of Claim \emph{\ref{c9.2}}]
We set $\Scal^*_{\Fcal}\coloneqq \{S\in\Scal_{\Fcal}:\exists S'\in\Scal_{\Gcal} \text{ with }
|S'\setminus S|\mik \alpha\}$, and we observe that
$(\Scal_{\Fcal}\setminus \Scal^*_{\Fcal},\Scal_{\Gcal})\in\mathrm{Forbid}(n,\{2\ell-2\alpha\})$.
Also note that $\varepsilon(\delta)\mik \frac{\ell}{2^{8}\, 58^6\, 60^4}$ and so, by~\eqref{e9.6},
$\alpha=\min\{2\ell-2\alpha,\alpha\}$. (In fact, $\alpha$ is significantly smaller than $\ell$.)
By part (i) of Proposition~\ref{p7.1}, we obtain
that\footnote{If $2\ell-\alpha> \frac{n}{2}$, then \eqref{e9.10} follows by applying
Proposition \ref{p7.1} to the complementary families.}
\begin{equation} \label{e9.10}
\frac{|\Scal_{\Fcal}\setminus\Scal^*_{\Fcal}|}{\binom{n}{2\ell-\alpha}}\cdot
\frac{|\Scal_{\Gcal}|}{\binom{n}{2\ell-\alpha}} \mik 50\,
\frac{(2\ell-\alpha)(n-2\ell+\alpha)}{n}\, \exp\Big(-\frac{\alpha^2}{58^2(2\ell-\alpha)}\Big).
\end{equation}
Plugging \eqref{e9.4} and \eqref{e9.7} into \eqref{e9.10}, we see that
\begin{equation} \label{e9.11}
\frac{|\Scal_{\Fcal}\setminus \Scal^*_{\Fcal}|}{|\Scal_{\Fcal}|} \mik
200\, \frac{(2\ell-\alpha)(n-2\ell+\alpha)}{n}\,
\exp\Big(\ee(\delta)-\frac{\alpha^2}{58^2(2\ell-\alpha)}\Big);
\end{equation}
since $\ee(\delta)-\frac{\alpha^2}{58^2(2\ell-\alpha)}<-\ee(\delta)$, this in turn implies that
$|\Scal^*_{\Fcal}|\meg \frac{|\Scal_{\Fcal}|}{2}$. Next, for every $S\in\Scal^*_{\Fcal}$ we select
$A\in\binom{[n]}{2\ell}$ and $S'\in\Scal_{\Gcal}$ such that $S,S'\subseteq A$, and
we observe that for each such $A$ there exist at most $\binom{2\ell}{\alpha}$ such $S$'s. Therefore,
\begin{equation} \label{e9.12}
|\Acal| \meg \frac{|\Scal_{\Fcal}|}{2\binom{2\ell}{\alpha}}.
\end{equation}
With identical arguments we verify that $|\Acal| \meg \frac{|\Scal_{\Gcal}|}{2\binom{2\ell}{\alpha}}$.
The last inequality~in~\eqref{e9.9} follows from the previous estimates, \eqref{e9.4} and \eqref{e9.7}.
The proof of Claim \ref{c9.2} is completed.
\end{proof}
Next, set
\begin{equation} \label{e9.13}
\beta\coloneqq \sqrt{2^3\, 58^2\, T\,\alpha} \stackrel{\eqref{e9.6}}{=}
\sqrt{2^4\,58^3\, T\, \sqrt{\ell\, \ee(\delta)}} \stackrel{\eqref{e9.3}}{=}
\frac{\delta}{60\ln\big(\frac{n}{\delta}\big)},
\end{equation}
and note that $116\alpha \mik\beta\mik\frac{\ell}{30}$ and $10\beta\ln\big(\frac{n}{\beta}\big)\mik\delta$.
(Indeed, $T=\max\{\ell,k-\ell\}\meg \ell\meg\alpha$ and so, by \eqref{e9.13}, we see that
$\beta\meg 116\alpha$; on the other hand, since $\delta\mik \min\{\ell,k-\ell\}\mik \ell$,
by~\eqref{e9.13} again, we obtain that $\beta\mik \frac{\ell}{30}$.) Moreover, for every $A\in\binom{[n]}{2\ell}$ set
\begin{equation} \label{e9.14}
y_{A}\coloneqq |\{(F,G)\in I_{\ell}(\Fcal,\Gcal):(F,G)\in\Fcal_{A}\times\Gcal_{A}
\text{ and } |F\cap G \cap A|=\ell-\beta\}|.
\end{equation}
For every $(F,G)\in I_{\ell}(\Fcal,\Gcal)$ we can bound the number of $A\in\Acal$ for which $(F,G)$ contributes to $y_A$ by counting the ways we can first select $F\cap G \cap A$, then $F\cap([n]\setminus G)\cap A$, then $([n]\setminus F)\cap G\cap A$, and finally $([n]\setminus F)\cap ([n]\setminus G)\cap A$. In particular, we have
\begin{equation} \label{e9.15}
\sum_{A\in\Acal} y_{A} \mik i_{\ell}(\Fcal,\Gcal) \binom{\ell}{\ell-\beta}
\sum_{0\mik i, j\mik \alpha} \binom{k-\ell}{i+\beta} \binom{n-k-\ell}{j+\beta} \binom{\ell}{i+j+\beta}.
\end{equation}
\begin{claim} \label{cl-8-new}
Given our starting assumption that \eqref{e9.5} does not hold true, we may select $A_0\in\Acal$ such that
\begin{equation} \label{e9.16}
y_{A_0} \mik
\binom{2\ell}{\ell}\binom{n-2\ell}{k-\ell} \exp\Big(-\frac{\delta}{3}\Big).
\end{equation}
\end{claim}
\begin{proof}[Proof of Claim \emph{\ref{cl-8-new}}]
First observe that
\begin{enumerate}
\item[$\bullet$] $i_{\ell} \big(\binom{[n]}{k},\binom{[n]}{n-k}\big)=
\binom{n}{k}\binom{k}{\ell}\binom{n-k}{n-k-\ell}=\binom{2\ell}{\ell}\binom{n-2\ell}{k-\ell}\binom{n}{2\ell}$.
\end{enumerate}
Next, using: (i) our starting assumption that \eqref{e9.5} does not hold true,
(ii) the fact that $\alpha+\beta\mik 2\alpha+\beta\mik \frac12\min\{\ell,k-\ell,n-k-\ell\}$, and
(iii) the fact that the function $x\mapsto \binom{y}{x}$ is increasing for $x\mik \frac{y}{2}$,
we obtain that
\begin{equation} \label{en.9.2}
\sum_{A\in\Acal}y_{A}\mik \alpha^2\binom{\ell}{\beta}\binom{k-\ell}{\alpha+\beta}
\binom{n-k-\ell}{\alpha+\beta}\binom{\ell}{2\alpha+\beta}
\binom{2\ell}{\ell}\binom{n-2\ell}{k-\ell}\binom{n}{2\ell}\exp(-\delta).
\end{equation}
Therefore, by \eqref{e9.9}, there exists $A_0\in\Acal$ such that
\begin{align} \label{en9.3}
y_{A_0} \mik \binom{2\ell}{\ell}\binom{n-2\ell}{k-\ell} & 4\alpha^2
\frac{\binom{2\ell}{\alpha}\binom{n}{2\ell}}{\binom{n}{2\ell-\alpha}}
\binom{k-\ell}{\alpha+\beta} \times \\
& \times \binom{n-k-\ell}{\alpha+\beta}
\binom{\ell}{2\alpha+\beta}\binom{\ell}{\beta}\exp\Big(\frac{\varepsilon(\delta)}{2}-\delta\Big). \nonumber
\end{align}

Now we claim that
\begin{equation} \label{e-new}
4\alpha^2\frac{\binom{2\ell}{\alpha}\binom{n}{2\ell}}{\binom{n}{2\ell-\alpha}}
\binom{k-\ell}{\alpha+\beta}\binom{n-k-\ell}{\alpha+\beta}
\binom{\ell}{2\alpha+\beta}\binom{\ell}{\beta} \mik \binom{n}{\beta}^5.
\end{equation}
To this end, notice that
\begin{equation} \label{en9.4}
\frac{\binom{2\ell}{\alpha}\binom{n}{2\ell}}{\binom{n}{2\ell-\alpha}}=\binom{n-2\ell+\alpha}{\alpha},
\end{equation}
and consequently,
\begin{equation} \label{en9.5}
\frac{\binom{2\ell}{\alpha}\binom{n}{2\ell}}{\binom{n}{2\ell-\alpha}}
\binom{k-\ell}{\alpha+\beta}\binom{n-k-\ell}{\alpha+\beta}
\binom{\ell}{2\alpha+\beta}\binom{\ell}{\beta} \mik
\binom{n}{\alpha} \binom{n}{\alpha+\beta}^2 \binom{n}{2\alpha+\beta}\binom{n}{\beta}.
\end{equation}
Moreover, by \eqref{e2.6} and the fact that $2\alpha+\beta \mik \frac{\ell}{10}\mik \frac{n}{4}$,
\begin{equation} \label{en9.6}
\frac{\binom{n}{\alpha+\beta}^2 \binom{n}{2\alpha+\beta}}{\binom{n}{\beta}^3} \mik
\Big(\frac{50}{24}\Big)^3 \, 2^{6\alpha\log_2(n)},
\end{equation}
and similarly, since $\alpha\mik \frac{\beta}{116}$,
\begin{equation} \label{en9.7}
\frac{\binom{n}{\alpha}}{\binom{n}{\beta}} \mik
\frac{25}{24}\, \sqrt{\frac{\beta}{\alpha}}\, 2^{-\frac{\beta}{2}\log_2(n)}.
\end{equation}
After observing that
\begin{equation} \label{en9.8}
\frac{\beta}{2} \log_2(n)> 2\log_2(2\alpha)+6\alpha \log_2(n)+ \frac{1}{2}\log_2\Big(\frac{\beta}{\alpha}\Big) +
\log_2\Big(\frac{25}{24}\Big) + 3\log_2\Big(\frac{50}{24}\Big),
\end{equation}
we conclude that \eqref{e-new} is satisfied.

Summing up, we see that there exists $A_0\in\Acal$ such that
\begin{equation} \label{en9.10}
y_{A_0}\mik \binom{2\ell}{\ell}\binom{n-2\ell}{k-\ell} \binom{n}{\beta}^5
\exp\Big(\frac{\varepsilon(\delta)}{2}-\delta\Big).
\end{equation}
The claim follows from this estimate together with \eqref{e2.6}, and invoking the choices of
$\varepsilon(\delta)$ and $\beta$ in \eqref{e9.3} and \eqref{e9.13}, respectively.
\end{proof}
Let $A_0\in\mathcal{A}$ be as in Claim \ref{cl-8-new}. We will show that
%
\begin{equation} \label{e9.17}
\frac{|\Fcal_{A_0}|}{y_{A_0}}>2 \ \ \ \text{ and } \ \ \ \frac{|\Gcal_{A_0}|}{y_{A_0}}>2.
\end{equation}
Indeed, by \eqref{e9.8} and the choices of $K_{\mathcal{F}}$ and $K_{\mathcal{G}}$, we have
\begin{equation} \label{en9.11}
\frac{|\Fcal_{A_0}|}{y_{A_0}}\meg \frac{\binom{2\ell-\alpha}{\ell}\binom{n-2\ell+\alpha}{k-\ell}}{2\binom{2\ell}{\ell}
\binom{n-2\ell}{k-\ell}} \, \exp\Big(\frac{\delta}{3}-\varepsilon(\delta)\Big)
\end{equation}
and
\begin{equation} \label{en9.12}
\frac{|\Gcal_{A_0}|}{y_{A_0}}\meg \frac{\binom{2\ell-\alpha}{\ell}\binom{n-2\ell+\alpha}{n-k-\ell}}{2\binom{2\ell}{\ell}
\binom{n-2\ell}{n-k-\ell}}\, \exp\Big(\frac{\delta}{3}-\varepsilon(\delta)\Big).
\end{equation}
Noticing that
\begin{equation} \label{en9.13}
\frac{\binom{2\ell-\alpha}{\ell}}{\binom{2\ell}{\ell}}\meg 2^{-\alpha}
\end{equation}
and using the previous two estimates and the choice of $\alpha$ in \eqref{e9.6}, we see that \eqref{e9.17} is satisfied.

We introduce the families
\begin{align*}
\Dcal_{\Fcal} & \coloneqq \big\{F\in\Fcal_{A_0}: \forall G\in\Gcal_{A_0}
\left(|F\cap G|\neq \ell \text{ or } |F\cap G\cap A_0|\neq \ell-\beta\right)\big\}, \\
\Dcal_{\Gcal} & \coloneqq \big\{G\in\Gcal_{A_0}: \forall F\in\Fcal_{A_0}
\left(|F\cap G|\neq \ell \text{ or } |F\cap G\cap A_0|\neq \ell-\beta\right)\big\}, \\
\Dcal_{\Fcal}^* & \coloneqq \Big\{B\subseteq A_0:
|\{F\in\Dcal_{\Fcal}:F\cap A_0=B\}|>\frac{K_{\Fcal}}{2^{2\ell+2}}\Big\}, \\
\Dcal_{\Gcal}^* & \coloneqq \Big\{B\subseteq A_0:
|\{G\in\Dcal_{\Gcal}: G\cap A_0=B\}|>\frac{K_{\Gcal}}{2^{2\ell+2}}\Big\}.
\end{align*}
Note that $|\Dcal_{\Fcal}|>\frac{K_{\Fcal}}{2}$ and $|\Dcal_{\Gcal}|>\frac{K_{\Gcal}}{2}$, and so,
there are $|D_{\Fcal}|>\frac{K_{\Fcal}}{2}$ pairs $(F,B)$ such that $F\cap A_0=B$.
For every $F\in\Dcal_{\Fcal}$ we have that $\ell \mik |F\cap A_0|\mik \ell+\alpha$; therefore,
there are at most $2^{2\ell-1}$ such choices for $B=F\cap A_0$. On the other hand,
for every such $B$ there at most $\binom{n-2\ell}{k-\ell}$ choices for $F\in D_{\Fcal}$. Hence,
\begin{equation}
\frac{K_{\Fcal}}{2}< \frac{K_{\Fcal}}{2^3}+|D_{\Fcal}^*|\binom{n-2\ell}{k-\ell},
\end{equation}
and similarly for $\mathcal{G}$. Consequently, we have
\begin{equation} \label{e9.18}
|\Dcal_{\Fcal}^*|>\frac{1}{4} \frac{K_{\Fcal}}{\binom{n-2\ell}{k-\ell}} \ \ \
\text{ and } \ \ \ |\Dcal_{\Gcal}^*|>\frac14 \frac{K_{\Gcal}}{\binom{n-2\ell}{n-k-\ell}}
\end{equation}
that implies that
\begin{equation} \label{e9.19}
|\Dcal_{\Fcal}^*|\cdot |\Dcal_{\Gcal}^*| \meg 2^{4\ell+1}\,
\exp\left(-2(\varepsilon(\delta)+2\alpha)\right) >
2^{4\ell+1}\exp\left(-\frac{\beta^2}{58^2\ell}\right).
\end{equation}
By Theorem \ref{main} applied for ``$p=p'=\frac12$", there exist
$B_1\in \Dcal_{\Fcal}^*$ and $B_2\in \Dcal_{\Gcal}^*$ such that $\left|B_1\cap B_2\right|=\ell-\beta$.
Next, set $x\coloneqq |B_1|-\ell$, $\Xcal\coloneqq \{F\setminus A_0:F\in \Dcal_\Fcal \text{ and } F\cap A_0=B_1\}$ and
$y\coloneqq |B_2|-\ell$, $\Ycal\coloneqq \{G\setminus A_0:G\in \Dcal_\Gcal \text{ and } G\cap A_0=B_2\}$.
Observe that $0\mik x,y\mik\alpha$ and
\begin{equation} \label{e9.20}
|\Xcal|\meg \frac{K_{\Fcal}}{2^{2\ell+2}} \ \ \ \text{ and } \ \ \
|\Ycal|\meg \frac{K_{\Gcal}}{2^{2\ell+2}}.
\end{equation}
Therefore, by our assumption on $\delta$ and the choice of $\alpha$ and $\beta$,
\begin{align}
\label{e9.21} |\Xcal|\cdot |\Ycal| & \meg \binom{n-2\ell}{k-\ell-x}\cdot
\binom{n-2\ell}{n-k-\ell-y}\cdot \frac{1}{2^{10}}\cdot \frac{1}{4\ell}\cdot e^\alpha\cdot
\exp\big(-2(\ee(\delta)+2\alpha)\big) \\
& > \binom{n-2\ell}{k-\ell-x}\cdot \binom{n-2\ell}{n-k-\ell-y}
\cdot 50n\cdot  \exp\left(-\frac{\beta^2}{58^2(k-\ell-x)}\right). \nonumber
\end{align}
By part (i) of Proposition \ref{p7.1}, there exist $H_1\in\Xcal$, $H_2\in \Ycal$ such that $\left|H_1\cap H_2\right|=\beta$.
It follows that $(B_1\cup H_1,B_2\cup H_2)\in (\Fcal_{A_0}\times\Gcal_{A_0})\cap(\Dcal_{\Fcal}\times\Dcal_{\Gcal})$,
which clearly leads to a contradiction.
\end{proof}

\subsection*{Acknowledgment}

The authors would like to thank the anonymous referee for numerous comments, remarks and suggestions
that helped us improve the exposition.

The research was supported by the Hellenic Foundation for Research and Innovation
(H.F.R.I.) under the “2nd Call for H.F.R.I. Research Projects to support
Faculty Members \& Researchers” (Project Number: HFRI-FM20-02717).


\end{document}